\newtheorem{proposition}{Proposition}[section]
\newtheorem{lemma}[proposition]{Lemma}
\newtheorem{theorem}[proposition]{Theorem}
\newtheorem{corollary}[proposition]{Corollary}
\def\la{\lambda}
\def\La{\Lambda}
\def\ep{\varepsilon}
\def\l{{\langle}}
\def\r{\rangle}
\def\R{{\mathbb R}}
\def\S{{\mathbb S}}
\def\E{{\mathbb E}}
\def\P{{\mathbb P}}
\makeatletter \@addtoreset{equation}{section} \makeatother
\newenvironment{remark}{%
    \vspace{0.3cm} \pagebreak [2]%
    \par%
    \refstepcounter{proposition}
    \noindent%
    {\bf Remark~\theproposition\  }}{\qed}%
\begin{document}

\title {The expected Euler characteristic approximation to excursion probabilities of smooth Gaussian random fields with general variance functions}
\author{Dan Cheng \\ Arizona State University}

\maketitle

\begin{abstract}
	Consider a centered smooth Gaussian random field $\{X(t), t\in T \}$ with a general (nonconstant) variance function. In this work, we demonstrate that as $u \to \infty$, the excursion probability $\P\{\sup_{t\in T} X(t) \geq u\}$ can be accurately approximated by $\E\{\chi(A_u)\}$ such that the error decays at a super-exponential rate. Here, $A_u = \{t\in T: X(t)\geq u\}$ represents the excursion set above $u$, and $\E\{\chi(A_u)\}$ is the expectation of its Euler characteristic $\chi(A_u)$. This result substantiates the expected Euler characteristic heuristic for a broad class of smooth Gaussian random fields with diverse covariance structures. In addition, we employ the Laplace method to derive explicit approximations to the excursion probabilities.
\end{abstract}

\noindent{\small{\bf Keywords}: Gaussian random fields, excursion probability, excursion set, Euler characteristic, nonconstant variance, asymptotics, super-exponentially small.}

\noindent{\small{\bf Mathematics Subject Classification}:\ 60G15, 60G60, 60G70.}

\section{Introduction}
Let $X = \{X(t),\, t\in T \}$ represent a real-valued Gaussian random field defined on the probability space $(\Omega, \mathcal{F}, \P)$, where $T$ denotes the parameter space. The study of excursion probabilities, denoted as $\P \{\sup_{t\in T} X(t) \geq u \}$, is a classical and fundamental problem in both probability and statistics. It finds extensive applications across numerous domains, including $p$-value computations, risk control and extreme event analysis, etc. 

In the field of statistics, excursion probabilities play a critical role in tasks such as controlling family-wise error rates \cite{TaylorW07,TaylorW08}, constructing confidence bands \cite{Sun:2001}, and detecting signals in noisy data \cite{Siegmund:1995,TaylorW07}. However, except for only a few examples, computing the exact values of these probabilities is almost  impossible. To address this challenge, many researchers have developed various methods for precise approximations of $\P \{\sup_{t\in T} X(t) \geq u \}$. These methods encompass techniques like the double sum method \cite{Piterbarg:1996}, the tube method \cite{Sun93} and the Rice method \cite{AzaisD02,AzaisW09}. For comprehensive theoretical insights and related applications, we refer readers to the survey by \citet{Adler00} and the monographs by \citet{Piterbarg:1996}, \citet{Adler:2007}, and \citet{AzaisW09}, as well as the references therein. 

In recent years, the expected Euler characteristic (EEC) method has emerged as a powerful tool for approximating excursion probabilities. This method, originating from the works of \citet{TTA05} and \citet{Adler:2007}, provides the following approximation:
\begin{equation}\label{Equation:mean EC approximation error}
	\P \bigg\{\sup_{t\in T} X(t) \geq u \bigg\} = \E\{\chi(A_u)\} + {\rm error}, \quad  {\rm as} \ u\to \infty,
\end{equation}
where $\chi(A_u)$ represents the Euler characteristic of the excursion set $A_u = \{t\in T: X(t)\geq u\}$. This approximation \eqref{Equation:mean EC approximation error} is highly elegant and accurate, primarily due to the fact that the principle term $\E\{\chi(A_u)\}$ is computable and the error term decays exponentially faster than the major component. However, it is essential to note that this method assumes a Gaussian field with constant variance, limiting its applicability in various scenarios.

In this paper, we extend the EEC method to accommodate smooth Gaussian random fields with general (nonconstant) variance functions. Our main objective is to demonstrate that the EEC approximation \eqref{Equation:mean EC approximation error} remains valid under these conditions, with the error term exhibiting super-exponential decay. For a precise description of our findings, please refer to Theorem \ref{Thm:MEC approximation je} below. Our derived approximation result shows that the maximum variance  of  $X(t)$, denoted by $\sigma_T^2$ (see \eqref{eq:R} below), plays a pivotal role in both $\E\{\chi(A_u)\}$ and the super-exponentially small error. In our analysis, we observe that the points where $\sigma_T^2$ is attained make the most substantial contributions to  $\E\{\chi(A_u)\}$. Building on this observation, we establish two simpler approximations: one in Theorem \ref{Thm:MEC approximation je2}, which incorporates boundary conditions on nonzero derivatives of the variance function over points where $\sigma_T^2$ is attained, and another in Theorem \ref{Thm:MEC approximation}, assuming only a single point attains $\sigma_T^2$.

In general, the EEC approximation can be expressed as an integral using the Kac-Rice formula, as outlined in \eqref{eq:EEC} in Theorem \ref{Thm:MEC approximation je}. While \cite{TTA05,Adler:2007} provided an elegant expression for $\E\{\chi(A_u)\}$ termed the Gaussian kinematic formula, this expression heavily relies on the assumption of unit variance, which simplifies the calculation. In our case, where the variance function of $X(t)$ varies across $T$, deriving an explicit expression for $\E\{\chi(A_u)\}$ becomes challenging. Instead, we apply the Laplace method to extract the term with the leading order of $u$ from the integral, leaving a remaining error that is $\E\{\chi(A_u)\}o(1/u)$. For a more detailed explanation, we offer specific calculations in Sections \ref{sec:unique-max} and \ref{sec:example}. To intuitively grasp the EEC approximation, one can roughly consider the major term as $g(u)e^{-u^2/(2\sigma_T^2)}$, while the error term diminishes as $o(e^{-u^2/(2\sigma_T^2) - \alpha u^2})$, where $g(u)$ is a polynomial in $u$, and $\alpha>0$ is a constant.

The structure of this paper is as follows: We begin by introducing the notations and assumptions in Section \ref{sec:notation}. In Section \ref{sec:main}, we present our main results, including Theorems \ref{Thm:MEC approximation je}, \ref{Thm:MEC approximation je2}, and \ref{Thm:MEC approximation}. To understand our approach, we outline the main ideas in Section \ref{sec:sketch} and delve into the analysis of super-exponentially small errors in Sections \ref{sec:small} and \ref{sec:diff}. Finally, we provide the proofs of our main results in Section \ref{sec:proof}. In Section \ref{sec:unique-max}, we apply the Laplace method to derive explicit approximations (Theorems \ref{thm:unique-max-boundary} and \ref{thm:unique-max}) for cases where a unique maximum point of the variance is present. In Section \ref{sec:example}, we demonstrate several examples that illustrate the evaluation of EEC and the subsequent approximation of excursion probabilities.

\section{Notations and assumptions}\label{sec:notation}

Let $\{X(t),\, t\in T\}$ be a real-valued and centered Gaussian random field, where $T$ is a compact rectangle in $\R^N$. We define
\begin{equation}\label{eq:R}
\begin{split}
\nu(t) = \sigma_t^2 = {\rm Var}(X(t)) \quad \text{\rm and }  \quad \sup_{t \in T} \nu(t) =\sigma_T^2.
\end{split}
\end{equation}
Here, $\nu(\cdot)$ represents the variance function of the field and $\sigma_T^2$ is the maximum variance over $T$. For a function $f(\cdot) \in C^2(\R^N)$ and $t\in \R^N$, we introduce the following notations on derivatives:
\begin{equation}\label{Eq:notatoin-diff}
\begin{split}
f_i (t)&=\frac{\partial f(t)}{\partial t_i}, \quad f_{ij}(t)=\frac{\partial^2 f(t)}{\partial t_i\partial t_j}, \quad \forall i, j=1, \ldots, N;\\
\nabla f(t) &= (f_1(t), \ldots , f_N(t))^{T}, \quad \nabla^2 f(t) = \left(f_{ij}(t)\right)_{ i, j = 1, \ldots, N}.
\end{split}
\end{equation}
Let $B \prec 0$ (negative definite) and $B \preceq 0$ (negative semi-definite) denote that a symmetric matrix $B$ has all negative or nonpositive eigenvalues, respectively. Additionally, we use ${\rm Cov}(\xi_1, \xi_2)$ and ${\rm Corr}(\xi_1, \xi_2)$ to represent the covariance and correlation between two random variables $\xi_1$ and $\xi_2$. The density of the standard Normal distribution is denoted as $\phi(x)$, and its tail probability is $\Psi(x) = \int_x^\infty \phi(y)dy$. Let $\S^j$ be the $j$-dimensional unit sphere.

Consider the domain $T=\prod^N_{i=1}[a_i, b_i]$, where $-\infty< a_i<b_i<\infty$. We draw from the notation established by Adler and Taylor in \cite{Adler:2007} to demonstrate that $T$ can be decomposed into the union of its interior and lower-dimensional faces. This decomposition forms the basis for calculating the Euler characteristic of the excursion set $A_u$, as elaborated in Section \ref{sec:main}.

Each face $K$ of dimension $k$ is defined by fixing a subset $\tau(K) \subset \{1, \ldots, N\}$ of size $k$ and a subset $\varepsilon(K) 
= \{\varepsilon_j, j\notin \tau(K)\} \subset \{0, 1\}^{N-k}$ of size $N-k$ so that
\begin{equation*}
\begin{split}
K= \{ t=(t_1, \ldots, t_N) \in T: \, &a_j< t_j <b_j  \ {\rm if} \ j\in \tau(K), \\
&t_j = (1-\ep_j)a_j + \ep_{j}b_{j}  \ {\rm if} \ j\notin \tau(K)  \}.
\end{split}
\end{equation*}
Denote by $\partial_k T$ the collection of all $k$-dimensional faces in $T$. The interior of $T$ is designated as $\overset{\circ}{T}=\partial_N T$, while the boundary of $T$ is formulated as $\partial T = \cup^{N-1}_{k=0}\cup _{K\in \partial_k T} K$. This allows us to partition $T$ in the following manner:
\begin{equation*}
	T= \bigcup_{k=0}^N \partial_k T = \bigcup_{k=0}^N\bigcup_{K\in \partial_k T}K.
\end{equation*}
For each $t\in T$, let
\begin{equation}\label{eq:Sigma}
\begin{split}
\nabla X_{|K}(t) &= (X_{i_1} (t), \ldots, X_{i_k} (t))^T_{i_1, \ldots, i_k \in \tau(K)}, \quad \nabla^2 X_{|K}(t) = (X_{mn}(t))_{m, n \in \tau(K)},\\
		\Sigma(t)&= \E\{X(t)\nabla^2X(t)\} = (\E\{X(t)X_{ij}(t)\})_{1\leq i,j\leq N}, \\ \Sigma_K (t)&=\E\{X(t)\nabla^2X_{|K}(t)\} = (\E\{X(t)X_{ij}(t)\})_{i,j\in \tau(K)},\\
		\La(t) &= {\rm Cov}(\nabla X(t)) = (\E\{X_i(t)X_j(t)\})_{1\leq i,j\leq N},\\
		\La_K(t) &= {\rm Cov}(\nabla X_{|K}(t))=(\E\{X_i(t)X_j(t)\})_{i,j\in \tau(K)}.
\end{split}
\end{equation}
For each $K\in \partial_k T$, we define the \emph{number of extended outward maxima
above $u$ on face $K$} as
\begin{equation*}
\begin{split}
M_u^E (K) & := \# \{ t\in K: X(t)\geq u, \nabla X_{|K}(t)=0, \nabla^2 X_{|K}(t)\prec 0, \varepsilon^*_jX_j(t) \geq 0, \forall j\notin \tau(K) \},
\end{split}
\end{equation*}
where $\ep^*_j=2\ep_j-1$, and define the \emph{number of local maxima above $u$ on face $K$} as
\begin{equation*}
\begin{split}
M_u (K) & := \# \{ t\in K: X(t)\geq u, \nabla X_{|K}(t)=0, \nabla^2 X_{|K}(t)\prec 0\}.
\end{split}
\end{equation*}
 Clearly, $M_u^E (X,K) \le M_u (X,K)$. 

For each $t\in T$ with $\nu(t)=\sigma_T^2$, we define the index set $\mathcal{I}(t) = \{ \ell:  \nu_\ell(t)=0\}$ representing the directions along which the partial derivatives of $\nu(t)$ vanish. If $t\in K\in \partial_k T$ with $\nu(t)=\sigma_T^2$, then we have $\tau(K) \subset \mathcal{I}(t)$ since $\nu_\ell(t) =0$ for all $\ell \in \tau(K)$. It is worth noting that since $\nu_i(t) = 2\E\{X_{i}(t)X(t)\}$, we can also express this index set as $\mathcal{I}(t) = \{ \ell: \E\{X(t)X_\ell(s)\}=0\}$. 

Our analytical framework relies on the following conditions for smoothness ({\bf H}1) and regularity ({\bf H}2), in addition to curvature conditions ({\bf H}3) or $({\bf H}3')$.
\begin{itemize}
	\item[({\bf H}1)] $X\in C^2(\R^N)$ almost surely and the second derivatives satisfy the
	\emph{uniform mean-square H\"older condition}: there exist constants $C, \delta>0$ such that
	\begin{equation*}
	\begin{split}
	\E(X_{ij}(t)-X_{ij}(t'))^2 &\leq C \|t-t'\|^{2\delta}, \quad \forall t,t'\in T,\ i, j= 1, \ldots, N.
	\end{split}
	\end{equation*}
	
	\item[({\bf H}2)] For every pair $(t, t')\in T^2$ with $t\neq t'$, the Gaussian vector
	\begin{equation*}
	\begin{split}
	\big(&X(t), \nabla X(t), X_{ij}(t), X(t'), \nabla X(t'), X_{ij}(t'), 1\leq i\leq j\leq N\big)
	\end{split}
	\end{equation*}
	is  non-degenerate.
	
	\item[({\bf H}3)] For every $t\in K\in \partial_k T$, $0\le k\le N-2$, such that $\nu(t)=\sigma_T^2$ and $\mathcal{I}(t)$ contains at least two indices, we have
	\begin{equation}\label{eq:Hessian_Sigma}
		(\E\{X(t)X_{ij}(t)\})_{i,j\in \mathcal{I}(t) }\prec 0.
	\end{equation} 
	
	\item[$({\bf H}3')$] For every $t\in K\in \partial_k T$, $0\le k\le N-2$, such that $\nu(t)=\sigma_T^2$ and $\mathcal{I}(t)$ contains at least two indices, we have
	\begin{equation}\label{eq:Hessian_I}
	\left(\nu_{ij}(t)\right)_{i,j\in \mathcal{I}(t) }\preceq 0.
\end{equation}
\end{itemize}

Conditions ({\bf H}3) and $({\bf H}3')$ involve the behavior of the variance function $\nu(t)$ at critical points, and they are closely related, as shown in Proposition \ref{prop:H3} below. Here we provide some additional insights into $({\bf H}3')$. Despite its initially technical appearance,  $({\bf H}3')$ is in fact a mild condition that specifically applies to lower-dimensional boundary points $t$ where $\nu(t)=\sigma_T^2$. In essence, it indicates that the variance function should possess a negative semi-definite Hessian matrix at these boundary critical points where $\nu(t)=\sigma_T^2$ while concurrently exhibiting at least two zero partial derivatives. 

For example, in the 1D case, since $\mathcal{I}(t)$ contains at most one index, there is no need to check $({\bf H}3')$. Similarly, in the 2D case, we only need to check $({\bf H}3')$ or \eqref{eq:Hessian_I} when $\sigma_T^2$ is achieved at corner points $t\in \partial_0 T$ with $\mathcal{I}(t)=\{1, 2\}$. Moreover, if the variance function $\nu(t)$ demonstrates strict monotonicity in all directions across $\R^N$, then $\mathcal{I}(t) = \emptyset$ and there is no need to verify $({\bf H}3')$.

\begin{proposition}\label{prop:H3}
	The condition $({\bf H}3')$ implies $({\bf H}3)$. In addition, $({\bf H}3)$ implies that
	\begin{equation}\label{eq:prop:H3}
		(\E\{X(t)X_{ij}(t)\})_{i,j\in \mathcal{I}(t) }\prec 0, \quad \forall t\in T \text{ with }  \nu(t)=\sigma_T^2.
	\end{equation}
\end{proposition}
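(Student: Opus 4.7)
The plan is to base both parts on the identity obtained by differentiating $\nu(t)=\E\{X(t)^2\}$ twice, namely $\nu_{ij}(t)=2\La_{ij}(t)+2\E\{X(t)X_{ij}(t)\}$. Restricting to the indices in $\mathcal{I}(t)$ this reads
\begin{equation*}
(\E\{X(t)X_{ij}(t)\})_{i,j\in\mathcal{I}(t)} = \tfrac{1}{2} (\nu_{ij}(t))_{i,j\in\mathcal{I}(t)} - (\La_{ij}(t))_{i,j\in\mathcal{I}(t)}.
\end{equation*}
The nondegeneracy assumption ({\bf H}2) forces $\La(t)\succ 0$, so its principal submatrix $(\La_{ij}(t))_{i,j\in\mathcal{I}(t)}$ is also positive definite and the second term is strictly negative definite. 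It therefore suffices, in each case, to show that $(\nu_{ij}(t))_{i,j\in\mathcal{I}(t)}\preceq 0$. For Part (1) this is literally the content of $({\bf H}3')$, so the identity immediately yields $({\bf H}3)$.

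For Part (2) I would partition the set of $t$ with $\nu(t)=\sigma_T^2$ according to $|\mathcal{I}(t)|$ and the dimension $k$ of the face $K\ni t$. If $|\mathcal{I}(t)|\ge 2$ and $k\le N-2$, the conclusion is exactly $({\bf H}3)$ and nothing further is needed. If $|\mathcal{I}(t)|=0$ the matrix is empty and the claim is vacuous; and if $\mathcal{I}(t)=\{i\}$ is a singleton, a one-sided 1D Taylor expansion of $\nu$ along $\pm e_i$ (at least one side keeps us inside $T$), combined with $\nu_i(t)=0$ and the global-maximum property of $t$, gives $\nu_{ii}(t)\le 0$, whence $\E\{X(t)X_{ii}(t)\}\le -\E\{X_i(t)^2\}<0$ by the identity. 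The remaining case is $|\mathcal{I}(t)|\ge 2$ with $k\in\{N-1,N\}$: here the tangent cone of $T$ at $t$ is a closed half-space or all of $\R^N$, so for any $v$ supported on $\mathcal{I}(t)$, one of $\pm v$ points into $T$; using $v^\top \nabla\nu(t)=0$ (since $\nu_\ell(t)=0$ for $\ell\in\mathcal{I}(t)$), a Taylor expansion
\begin{equation*}
\nu(t+sv)-\sigma_T^2 = \tfrac{1}{2} s^2\, v^\top \nabla^2\nu(t)\, v + o(s^2) \le 0
\end{equation*}
forces $v^\top \nabla^2\nu(t) v\le 0$. Hence $(\nu_{ij}(t))_{i,j\in\mathcal{I}(t)}\preceq 0$ and the identity once more produces the strict negative definiteness.

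The main obstacle is precisely the last case: the one-sided Taylor trick succeeds only because a face of codimension at most one has a half-space (or full-space) tangent cone, so replacing $v$ by $-v$ always recovers an admissible direction. On a face of codimension two or more with two or more boundary coordinates active in $\mathcal{I}(t)$, the tangent cone becomes a proper polyhedral cone and neither $v$ nor $-v$ need stay in $T$; this is exactly why $({\bf H}3)$ must be imposed outright for $k\le N-2$ rather than being derivable from the fact that $\nu$ attains its maximum at $t$.
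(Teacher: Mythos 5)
Your proposal is correct and follows essentially the same route as the paper: the identity $\nu_{ij}(t)/2=\E\{X(t)X_{ij}(t)\}+\E\{X_i(t)X_j(t)\}$ together with positive definiteness of the gradient covariance from $({\bf H}2)$, and then Taylor expansions at the maximum to get $(\nu_{ij}(t))_{i,j\in\mathcal{I}(t)}\preceq 0$ in exactly the cases ($k\in\{N-1,N\}$ or $|\mathcal{I}(t)|\le 1$) not covered by $({\bf H}3)$. Your use of the evenness of the quadratic form to handle the half-space tangent cone at a codimension-one face is in fact a slightly more careful phrasing of the paper's corresponding step, but it is the same argument.
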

\begin{proof}
	Taking the second derivative on both sides of $\nu(t)=\E\{X(t)^2\}$, we obtain $\nu_{ij}(t)/2 = \E\{X(t)X_{ij}(t)\} + \E\{X_i(t)X_j(t)\}$, implying
	\begin{equation}\label{eq:negdef}
	(\E\{X(t)X_{ij}(t)\})_{i, j \in \mathcal{I}(t)} = \frac{1}{2}(\nu_{ij}(t))_{i, j \in\mathcal{I}(t)} - (\E\{X_i(t)X_j(t)\})_{i, j \in \mathcal{I}(t)}.
	\end{equation}
	Note that, as a covariance matrix, $(\E\{X_i(t)X_j(t)\})_{i, j \in \mathcal{I}(t)}$ is positive definite by $({\bf H}2)$. Therefore, \eqref{eq:Hessian_I} implies \eqref{eq:Hessian_Sigma}, or equivalently $({\bf H}3')$ implies $({\bf H}3)$. 
	
	Next we demonstrate that $({\bf H}3)$ implies \eqref{eq:prop:H3}. It suffices to show \eqref{eq:Hessian_Sigma} for $k= N-1$ and $k=N$, and for the case that $\mathcal{I}(t)$ contains at most one index, which complement those cases in $({\bf H}3)$.
	
	(i) If $k = N$, then $t$ becomes a maximum point of $\nu$ within the interior of $T$ and $\mathcal{I}(t) = \tau(K) =\{1, \cdots, N\}$, implying \eqref{eq:Hessian_I}, and hence \eqref{eq:Hessian_Sigma} holds by \eqref{eq:negdef}.
	
	(ii) For $k=N-1$, we consider two scenarios. If $\mathcal{I}(t) = \tau(K)$, then $t$ becomes a maximum point of $\nu$ restricted on $K$, hence \eqref{eq:Hessian_Sigma} is satisfied as discussed above. If $\mathcal{I}(t) =\{1, \cdots, N\}$, then it follows from Taylor's formula that
	\begin{equation*}
		\nu (t') =\nu(t) + (t'-t)^T \nabla^2 \nu(t) (t'-t)+o(\|t'-t\|^2), \quad t'\in T.
	\end{equation*}
	Notice that $\{(t'-t)/\|t'-t\|: t'\in T\}$ contains all directions in $\R^N$ 
	since $t\in K \in \partial_{N-1}T$, together with the fact $\nu(t)=\sigma_T^2$, we see that $\nabla^2 \nu(t)$ cannot have any positive eigenvalue, thus \eqref{eq:Hessian_I} and hence \eqref{eq:Hessian_Sigma} hold.
	
	(iii) Finally, it's evident from the 1D Taylor's formula that \eqref{eq:Hessian_I} is valid when $\mathcal{I}(t)$ contains only one index. 
\end{proof}

	The condition \eqref{eq:prop:H3} established in Proposition \ref{prop:H3} serves as the fundamental requirement for our main results, as demonstrated in Theorems \ref{Thm:MEC approximation je}, \ref{Thm:MEC approximation je2} and \ref{Thm:MEC approximation} below. As seen from Proposition \ref{prop:H3}, we can simplify \eqref{eq:prop:H3} to condition $({\bf H}3)$. Thus our main results will be presented under the assumption of condition $({\bf H}3)$.
	
	Furthermore, it is worth highlighting that, in practical applications, verifying $({\bf H}3')$ can often be a more straightforward process. This condition directly pertains to the variance function $\nu(t)$, making it easier to assess. Thus, Proposition \ref{prop:H3} provides the flexibility to check $({\bf H}3')$ instead of $({\bf H}3)$. This insight simplifies the verification procedure, enhancing the practical applicability of our results.


\section{Main results}\label{sec:main}
Here, we will present our main results Theorems \ref{Thm:MEC approximation je}, \ref{Thm:MEC approximation je2} and \ref{Thm:MEC approximation}, whose proofs are given in Section \ref{sec:proof}. Define the \emph{number of extended outward critical points of index $i$
	above level $u$ on face $K$} be
\begin{equation*}
\begin{split}
\mu_i(K) & := \# \{ t\in K: X(t)\geq u, \nabla X_{|K}(t)=0, \text{index} (\nabla^2 X_{|K}(t))=i, \\
& \qquad \qquad \qquad \varepsilon^*_jX_j(t) \geq 0 \ {\rm for \ all}\ j\notin \tau(K) \}.
\end{split}
\end{equation*}
Recall that  $\ep^*_j=2\ep_j-1$ and the index of a matrix is defined as the number of its negative
eigenvalues. It is evident to observe that $\mu_N(K)=M_u^E(K)$. It follows from ({\bf H}1), ({\bf H}2) and the Morse theorem (see  Corollary 9.3.5 or 
pages 211--212 in \citet{Adler:2007}) that the Euler characteristic of the excursion set $A_u$ can be represented as
\begin{equation}\label{eq:Euler}
\begin{split}
\chi(A_u)&= \sum^N_{k=0}\sum_{K\in \partial_k T}(-1)^k\sum^k_{i=0} (-1)^i \mu_i(K).
\end{split}
\end{equation}
Now we state the following general result on the EEC approximation for the excursion probability.
\begin{theorem}\label{Thm:MEC approximation je} 
Let $\{X(t),\, t\in T\}$ be a centered Gaussian random field satisfying $({\bf H}1)$, $({\bf H}2)$ and $({\bf H}3)$. Then there exists a constant
$\alpha>0$ such that as $u\to \infty$,
\begin{equation}\label{eq:EEC}
	\begin{split}
	&\quad \P \left\{\sup_{t\in T} X(t) \geq u \right\}\\
	&=\sum^N_{k=0}\sum_{K\in \partial_k T}(-1)^k\int_K \E\big\{{\rm det}\nabla^2 X_{|K}(t)\mathbbm{1}_{\{X(t)\geq u, \ \varepsilon^*_\ell X_\ell(t) \geq 0 \ {\rm for \ all}\ \ell\notin \tau(K)\}} \big| \nabla X_{|K}(t)=0\big\}\\
	&\quad \times p_{\nabla X_{|K}(t)}(0)dt   +o\left( \exp \left\{ -\frac{u^2}{2\sigma_T^2} -\alpha u^2 \right\}\right)\\
	&= \E\{\chi(A_u)\}+o\left( \exp \left\{ -\frac{u^2}{2\sigma_T^2} -\alpha u^2 \right\}\right).
	\end{split}
	\end{equation}
\end{theorem}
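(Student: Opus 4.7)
The plan splits the claim into the two equalities displayed in \eqref{eq:EEC}. The first identity---expressing $\E\{\chi(A_u)\}$ as the stated Kac--Rice integral---follows from the Morse representation \eqref{eq:Euler}: on each face $K$ of dimension $k$, a critical point $t$ of $X_{|K}$ contributes $(-1)^{{\rm index}(\nabla^2 X_{|K}(t))}$ to $\sum_{i=0}^k(-1)^i\mu_i(K)$, so this signed count equals $\sum_t {\rm sign}({\rm det}\,\nabla^2 X_{|K}(t))\,\mathbbm{1}_{\{X(t)\ge u,\,\varepsilon^*_\ell X_\ell(t)\ge 0 \text{ for }\ell\notin\tau(K)\}}$. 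Applying the Kac--Rice formula face by face, justified by $({\bf H}1)$ and $({\bf H}2)$, then yields exactly the integrand in \eqref{eq:EEC}.

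The substantive content is the second equality. I would prove it by comparing both sides to $S(u):=\sum_{k=0}^N\sum_{K\in\partial_k T}\E\{M_u^E(K)\}$, the expected total number of extended outward local maxima above $u$. Any point at which $\sup_T X$ is attained must be an extended outward maximum on the unique face containing it, so with $N_u:=\sum_{k,K}M_u^E(K)$,
\begin{equation*}
\Big\{\sup_{t\in T}X(t)\ge u\Big\}=\{N_u\ge 1\},
\end{equation*}
and the elementary inequality $N_u-\binom{N_u}{2}\le \mathbbm{1}_{\{N_u\ge 1\}}\le N_u$ gives
\begin{equation*}
0\le S(u)-\P\big\{\sup_{t\in T}X(t)\ge u\big\}\le \tfrac{1}{2}\E\{N_u(N_u-1)\}.
\end{equation*}
On the other side, the identity $(-1)^k\sum_{i=0}^k(-1)^i\mu_i(K)=M_u^E(K)+(-1)^k\sum_{i=0}^{k-1}(-1)^i\mu_i(K)$ combined with \eqref{eq:Euler} yields
\begin{equation*}
\E\{\chi(A_u)\}-S(u)=\sum_{k=0}^N\sum_{K\in\partial_k T}(-1)^k\sum_{i=0}^{k-1}(-1)^i\E\{\mu_i(K)\}.
\end{equation*}
Thus the theorem reduces to showing that $\E\{N_u(N_u-1)\}$ and each $\E\{\mu_i(K)\}$ with $i<k$ are $o(\exp\{-u^2/(2\sigma_T^2)-\alpha u^2\})$ for some $\alpha>0$.

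The main obstacle---and the reason for condition $({\bf H}3)$---is establishing these two super-exponential bounds, which I expect the paper to carry out in Sections~\ref{sec:small} and \ref{sec:diff}. For $\E\{N_u(N_u-1)\}$, Kac--Rice turns the quantity into a sum of double integrals over pairs $(t_1,t_2)\in K_1\times K_2$ with $t_1\neq t_2$; I would split each integral by whether both $t_i$ lie in a small neighborhood of the set $\{\nu=\sigma_T^2\}$ or at least one stays away. Away from the maximum set, the Gaussian density of $X(t_i)$ decays strictly faster than $e^{-u^2/(2\sigma_T^2)}$ and already beats the target rate. Near the maximum, $({\bf H}2)$ provides enough non-degeneracy of the joint law of $(X(t_1),X(t_2))$ to extract an extra $e^{-\alpha u^2}$ factor from its density after a second-order Taylor expansion of the covariance.

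For the lower-index terms $\E\{\mu_i(K)\}$ with $i<k$, the Kac--Rice integrand is $\E\{|{\rm det}\,\nabla^2 X_{|K}(t)|\,\mathbbm{1}_{\{{\rm index}=i,\,X(t)\ge u,\cdots\}}\mid\nabla X_{|K}(t)=0\}\,p_{\nabla X_{|K}(t)}(0)$. At $t$ with $\nu(t)=\sigma_T^2$ and $\tau(K)\subset\mathcal{I}(t)$, conditioning additionally on $X(t)=u$ makes the conditional mean of $\nabla^2 X_{|K}(t)$ asymptotic to $(u/\nu(t))\Sigma_K(t)$, and Proposition~\ref{prop:H3} combined with $({\bf H}3)$ forces $\Sigma_K(t)\prec 0$; hence producing any non-negative eigenvalue of $\nabla^2 X_{|K}(t)$ requires a Gaussian fluctuation of size of order $u$ and costs an $e^{-\alpha u^2}$ factor. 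Points with $\nu(t)<\sigma_T^2$ or $\tau(K)\not\subset\mathcal{I}(t)$ are handled by the variance-suppression factor $e^{-u^2/(2\nu(t))}$ together with a Taylor expansion near the maximum set. Making all these local estimates sufficiently uniform to produce a single $\alpha>0$ that works simultaneously in both bounds is the core technical difficulty.
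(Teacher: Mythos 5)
Your overall architecture is the same as the paper's: compare both sides to $\sum_{k,K}\E\{M_u^E(K)\}$, control the excess via second-moment (Bonferroni-type) terms, and show that critical points of index $i<k$ contribute only a super-exponentially small amount because, near the maximum-variance set, the conditional mean of $\nabla^2 X_{|K}(t)$ given $X(t)=x\ge u$, $\nabla X_{|K}(t)=0$ is asymptotically $(x/\sigma_T^2)\Sigma_K(t)\prec 0$ by $({\bf H}3)$ and Proposition \ref{prop:H3}; this last point is essentially Proposition \ref{Prop:simlify the high moment je}, stated there as removing the indicator $\mathbbm{1}_{\{\nabla^2 X_{|K}(t)\prec 0\}}$ at super-exponentially small cost, and your first identity (Morse plus Kac--Rice) is the second equality in that proposition.

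However, the mechanism you propose for the second-moment bound $\E\{N_u(N_u-1)\}$ has a genuine gap. You claim that near the maximum-variance set, $({\bf H}2)$ makes the joint law of $(X(t_1),X(t_2))$ non-degenerate enough to extract an extra $e^{-\alpha u^2}$ factor from its density. This fails precisely in the hard regimes: (a) within a single face, as $t_2\to t_1$ the correlation of $X(t_1),X(t_2)$ tends to $1$ and the Kac--Rice integrand for the factorial moment develops a singularity (the conditional density of $(\nabla X_{|K}(t_1),\nabla X_{|K}(t_2))$ at $(0,0)$ blows up), so no extra decay comes from the bivariate density; the paper instead invokes Piterbarg's Lemma \ref{lemma:Piterbarg}, whose exponent is governed by $\beta_K^2=\sup_t\sup_e{\rm Var}(X(t)\mid\nabla X_{|K}(t),\nabla^2 X_{|K}(t)e)$, and the whole role of $({\bf H}3)$ in Proposition \ref{Lem:factorial moments je} is to force $\beta_K^2<\sigma_T^2$ --- conditioning on a directional second derivative, which your sketch never uses; (b) for adjacent faces $K,K'$ whose common closure contains points of $\mathcal{M}_0$ as in \eqref{Eq:M0}, points $t\in K$, $t'\in K'$ can approach the same maximum-variance boundary point, again sending the correlation to $1$, and there the paper (Proposition \ref{Lem:cross terms je}) must exploit the extended-outward constraints $\varepsilon_j^*X_j\ge 0$ together with the directional decomposition of $\La_{K\cup K'}(t)$ (the sets $D_0,D_i$ and the argument borrowed from Cheng--Xiao) rather than any non-degeneracy of $(X(t),X(t'))$. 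Without an argument of this type, your reduction to ``$\E\{N_u(N_u-1)\}$ is super-exponentially small'' is asserted but not provable by the route you describe, so the proof as proposed does not close.
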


In general, computing the EEC approximation $\E\{\chi(A_u)\}$ is a challenging task because it involves conditional expectations over the joint covariance of the Gaussian field and its Hessian, given zero gradient, which vary across $T$. However, one can apply the Laplace method to extract the term with the largest order of $u$ from $\E\{\chi(A_u)\}$ such that the remaining error is $o(1/u)\E\{\chi(A_u)\}$. Examples demonstrating the Laplace method are presented in Section \ref{sec:example}.
	
It is important to note that in the expression \eqref{eq:EEC}, when $k=0$, all terms involving $\nabla X_{|K}(t)$ and $\nabla^2 X_{|K}(t)$ vanish. Consequently, if $k=0$, we treat the integral in \eqref{eq:EEC} as the usual Gaussian tail probabilities. This notation is also adopted in the results presented in Theorems \ref{Thm:MEC approximation je2} and \ref{Thm:MEC approximation} below.

The proof of Theorem \ref{Thm:MEC approximation je} reveals that points where the maximum variance $\sigma_T^2$ is attained make the most significant contribution to $\E\{\chi(A_u)\}$. Therefore, in many cases, the general EEC approximation $\E\{\chi(A_u)\}$ can be simplified. The following result is based on the boundary condition \eqref{Eq:boundary} and is applicable at boundary points where nonzero partial derivatives of the variance function occur when $\sigma_T^2$ is reached.
\begin{theorem}\label{Thm:MEC approximation je2} Let $\{X(t),\, t\in T\}$ be a centered Gaussian random field satisfying $({\bf H}1)$, $({\bf H}2)$ and the following boundary condition 
\begin{equation}\label{Eq:boundary}
	\begin{split}
		\Big\{t\in J:\, \nu(t)=\sigma_T^2, \prod_{i\notin \tau(J)}\nu_i(t)=0\Big\} = \emptyset, \quad \forall \text{ face } J\subset T.
	\end{split}
\end{equation} 
Then there exists a constant $\alpha>0$ such that as $u\to \infty$,
	\begin{equation*}
	\begin{split}
	\P \left\{\sup_{t\in T} X(t) \geq u \right\}
	&=\sum^N_{k=0}\sum_{K\in \partial_k T}(-1)^{k}\int_K  
	\E\big\{{\rm det}\nabla^2 X_{|K}(t) \mathbbm{1}_{\{X(t)\geq u\}}\big|\nabla X_{|K}(t)=0\big\}\\
	&\quad   \times p_{\nabla X_{|K}(t)}(0)dt  +o\left( \exp \left\{ -\frac{u^2}{2\sigma_T^2} -\alpha u^2 \right\}\right).
	\end{split}
	\end{equation*}
\end{theorem}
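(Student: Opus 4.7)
The plan is to reduce to Theorem~\ref{Thm:MEC approximation je}. Subtracting the two claimed expansions, it suffices to show that for each face $K\in\partial_k T$ with $k<N$ (the case $k=N$ is trivial, since $\tau(K)=\{1,\dots,N\}$ leaves no normal directions to worry about) the quantity
\[
D_K(u) := \int_K \E\bigl\{{\rm det}\nabla^2 X_{|K}(t)\, \mathbbm{1}_{\{X(t)\ge u\}}\, \mathbbm{1}_{B(t)}\,\bigl|\,\nabla X_{|K}(t)=0\bigr\}\,p_{\nabla X_{|K}(t)}(0)\,dt
\]
is $o(\exp\{-u^2/(2\sigma_T^2)-\alpha u^2\})$ for some $\alpha>0$, where $B(t):=\{\exists\,\ell\notin\tau(K):\,\varepsilon^*_\ell X_\ell(t)<0\}$. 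I would split $K=K_1\cup K_2$, with $K_1$ a small relative neighborhood in $K$ of the maximum-variance set $M_K:=\{t\in K:\nu(t)=\sigma_T^2\}$ and $K_2=K\setminus K_1$. On $K_2$, one has $\nu(t)\le\sigma_T^2-\delta$ for some $\delta>0$, so the Gaussian tail bound combined with the polynomial conditional moments of the Hessian determinant (standard estimates, analogous to those developed in Sections~\ref{sec:small}--\ref{sec:diff}) already yields the required super-exponential decay with room to spare.

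The essential work concerns $K_1$ and uses the hypothesis~\eqref{Eq:boundary}. Every $t\in M_K$ is a global maximum of $\nu$ on $T$, so $\varepsilon^*_\ell\nu_\ell(t)\ge 0$ for $\ell\notin\tau(K)$; \eqref{Eq:boundary} upgrades this to $\varepsilon^*_\ell\nu_\ell(t)>0$. Since $\nu_\ell(t)=2\E\{X(t)X_\ell(t)\}$, compactness of $M_K$ and continuity allow us to choose $K_1$ so that $\varepsilon^*_\ell \E\{X(t)X_\ell(t)\}/\sigma_t^2 \ge c>0$ uniformly in $t\in K_1$ and $\ell\notin\tau(K)$. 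Using the Gaussian regression decomposition
\[
X_\ell(t) = \frac{\E\{X(t)X_\ell(t)\}}{\sigma_t^2}X(t) + \langle a_\ell(t),\nabla X_{|K}(t)\rangle + R_\ell(t),
\]
with $R_\ell(t)$ Gaussian, independent of $(X(t),\nabla X_{|K}(t))$ and of bounded variance by~$({\bf H}2)$, it follows that conditional on $X(t)=v\ge u$ and $\nabla X_{|K}(t)=0$, $\varepsilon^*_\ell X_\ell(t)$ is Gaussian with mean $\ge cv-O(1)$ and bounded variance. Hence, by a union bound over the finitely many $\ell\notin\tau(K)$,
\[
\P\bigl(B(t)\,\bigl|\,X(t)=v,\,\nabla X_{|K}(t)=0\bigr) \;\le\; C\,e^{-c'v^2}
\]
for some $c'>0$ uniformly over $t\in K_1$.

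Finally, applying Kac--Rice to rewrite $D_{K_1}(u) = \int_{K_1}dt\,p_{\nabla X_{|K}(t)}(0)\int_u^\infty E_t(v)\,p_{X(t)|\nabla X_{|K}(t)=0}(v)\,dv$, where $E_t(v):=\E\{{\rm det}\nabla^2 X_{|K}(t)\,\mathbbm{1}_{B(t)}\mid X(t)=v,\nabla X_{|K}(t)=0\}$, Cauchy--Schwarz bounds $|E_t(v)|$ by a polynomial in $v$ times $e^{-c'v^2/2}$, the polynomial coming from the standard fact that conditional on $X(t)=v$ the entries of $\nabla^2 X_{|K}(t)$ are Gaussians with means linear in $v$. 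Since the conditional density of $X(t)$ given $\nabla X_{|K}(t)=0$ is dominated by a Gaussian of variance $\le\sigma_T^2$, the $v$-integral is $O(\exp\{-u^2/(2\sigma_T^2)-\alpha u^2\})$ for any $\alpha<c'/2$, completing the reduction. The main obstacle I anticipate is making the key bound of the second paragraph fully uniform in $t\in K_1$ and in $\ell\notin\tau(K)$ after conditioning on $\nabla X_{|K}(t)=0$: one must verify that the coefficient of $X(t)$ in the projection of $X_\ell(t)$ onto $(X(t),\nabla X_{|K}(t))$ remains bounded below by a positive constant on $K_1$, which ultimately reduces to nondegeneracy via $({\bf H}2)$ combined with a compactness argument on $M_K$.
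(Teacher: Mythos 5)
Your route is genuinely different from the paper's, and in outline it is sound. The paper never passes through Theorem~\ref{Thm:MEC approximation je}: it reruns the sandwich argument of Section~\ref{sec:sketch} with the extended outward maxima $M_u^E(K)$ replaced by the plain local maxima $M_u(K)$ (Remark~\ref{remark:M_u}), uses the boundary condition to kill the adjacent-face cross moments $\E\{M_u(K)M_u(K')\}$ (Proposition~\ref{Lem:cross terms je2}, via $\mathcal{M}_0=\emptyset$), and then simplifies $\E\{M_u(K)\}$ into the stated integral (Proposition~\ref{Prop:simlify the high moment je2}). You instead take Theorem~\ref{Thm:MEC approximation je} as a black box and show directly that deleting the outward indicator changes each face integral by a super-exponentially small amount, because near maximum-variance points \eqref{Eq:boundary} forces $\varepsilon^*_\ell\nu_\ell(t)>0$, hence a conditional drift of order $cv$ for $\varepsilon^*_\ell X_\ell(t)$ given $X(t)=v\ge u$, $\nabla X_{|K}(t)=0$; your Cauchy--Schwarz step then converts the wrong-sign probability bound $Ce^{-c'v^2}$ into the extra factor $e^{-\alpha u^2}$. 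This is a clean and shorter reduction; what the paper's route buys is that it works entirely with the $M_u$-versions of the intermediate propositions and does not need to quote the full Theorem~\ref{Thm:MEC approximation je}.

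Three points need attention. First, to invoke Theorem~\ref{Thm:MEC approximation je} at all you must verify $({\bf H}3)$, which Theorem~\ref{Thm:MEC approximation je2} does not assume; it does follow from \eqref{Eq:boundary} (at a max-variance $t\in K$ the condition forces $\mathcal{I}(t)=\tau(K)$, and $\nabla^2\nu_{|K}(t)\preceq 0$ since $t$ is a relative interior maximizer of $\nu_{|K}$, so $({\bf H}3')$ and hence, by Proposition~\ref{prop:H3}, $({\bf H}3)$ hold), but you should state this. Second, your set $M_K$ must be $\{t\in\bar K:\nu(t)=\sigma_T^2\}$, not $\{t\in K:\nu(t)=\sigma_T^2\}$: with the relatively open face, the claim $\nu\le\sigma_T^2-\delta$ on $K_2$ fails whenever the maximal variance is attained only on $\bar K\setminus K$ (e.g.\ at a corner of $T$). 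The repair is immediate: at $t^*\in J\subset\bar K$ with $\nu(t^*)=\sigma_T^2$, apply \eqref{Eq:boundary} to the face $J$ (the hypothesis covers all faces); since $\ell\notin\tau(K)$ implies $\ell\notin\tau(J)$ and the $\ell$-th coordinate is pinned at the same endpoint throughout $\bar K$, the one-sided maximality argument still gives $\varepsilon^*_\ell\nu_\ell(t^*)>0$, and compactness plus continuity yield the uniform lower bound on a neighborhood $K_1$ of the closed set. Third, the obstacle you flag at the end resolves exactly as you hope: at any max-variance point $\E\{X(t^*)\nabla X_{|K}(t^*)\}=\tfrac12\nabla\nu_{|K}(t^*)=0$, so the coefficient of $X(t)$ in the projection of $X_\ell(t)$ onto $(X(t),\nabla X_{|K}(t))$ equals $\nu_\ell(t^*)/(2\sigma_T^2)$ there, has sign $\varepsilon^*_\ell$ and is bounded away from zero on $M_K$, hence on a small enough $K_1$ by continuity (using $({\bf H}2)$ only for invertibility of the relevant covariance). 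With these repairs your argument goes through uniformly in $t$ and $\ell$, and the conclusion follows.
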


In other words, the boundary condition \eqref{Eq:boundary} indicates that, for any point $t\in J$ attaining the maximum variance $\sigma_T^2$, there must be $\nu_i(t)\neq 0$ for all $i\notin \tau(J)$. In particular, as an important property, we observe that \eqref{Eq:boundary} implies the condition $({\bf H}3')$ and hence $({\bf H}3)$. The following result provides an asymptotic approximation for the special case where the variance function attains its maximum $\sigma_T^2$ only at a unique point.

\begin{theorem}\label{Thm:MEC approximation} Let $\{X(t),\, t\in T\}$ be a centered Gaussian random field satisfying $({\bf H}1)$, $({\bf H}2)$ and $({\bf H}3)$. Suppose $\nu(t)$ attains its maximum $\sigma_T^2$ only at a single point $t^*\in K$, where $K\in \partial_k T$ with $k\ge 0$. Then there exists a constant $\alpha>0$ such that as $u\to \infty$,
	\begin{equation*}
		\begin{split}
			&\quad \P \left\{\sup_{t\in T} X(t) \geq u \right\}\\
			&=\sum_{J}(-1)^{{\rm dim}(J)}\int_J \E\big\{{\rm det}\nabla^2 X_{|J}(t)\mathbbm{1}_{\{X(t)\geq u, \ \varepsilon^*_\ell X_\ell(t) \geq 0 \ {\rm for \ all}\ \ell\in \mathcal{I}(t^*)\setminus \tau(J)\}} \big| \nabla X_{|J}(t)=0\big\}\\
			&\quad \times p_{\nabla X_{|J}(t)}(0)dt +o\left( \exp \left\{ -\frac{u^2}{2\sigma_T^2} -\alpha u^2 \right\}\right),
		\end{split}
	\end{equation*}
where the sum is taken over all faces $J$ of $T$ such that $t^*\in \bar{J}$ and $\tau(J)\subset \mathcal{I}(t^*)$.
\end{theorem}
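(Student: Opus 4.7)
The plan is to start from the general EEC approximation of Theorem \ref{Thm:MEC approximation je}, which expresses $\P\{\sup_{t\in T} X(t)\ge u\}$ as a Kac--Rice sum $\sum_{K}(-1)^{k}I(K)$ over all faces $K\in\partial_k T$, up to an $o(e^{-u^2/(2\sigma_T^2)-\alpha u^2})$ error, and then argue that under the uniqueness hypothesis the only surviving contributions come from faces $J$ with $t^*\in\bar J$ and $\tau(J)\subset\mathcal{I}(t^*)$, and that on such $J$ the outward-derivative indicator may be trimmed from $\{\varepsilon^*_\ell X_\ell(t)\ge 0,\ \forall\ell\notin\tau(J)\}$ down to $\{\varepsilon^*_\ell X_\ell(t)\ge 0,\ \forall\ell\in\mathcal{I}(t^*)\setminus\tau(J)\}$ at the price of another super-exponentially small error.

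First I would dispose of faces $K$ with $t^*\notin\bar K$: by uniqueness of $t^*$ and compactness, $\sup_{t\in K}\nu(t)\le\sigma_T^2-\delta_0$ for some $\delta_0>0$, so the joint Gaussian density $p_{(X(t),\nabla X_{|K}(t))}(u,0)$ is bounded by a polynomial in $u$ times $e^{-u^2/(2(\sigma_T^2-\delta_0))}$. The conditional moment of $\det\nabla^2 X_{|K}(t)$ is controlled uniformly by $({\bf H}1)$--$({\bf H}2)$, so that $I(K)=o(e^{-u^2/(2\sigma_T^2)-\alpha u^2})$.

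The core step is to treat faces $K$ with $t^*\in\bar K$ but $\tau(K)\not\subset\mathcal{I}(t^*)$. Pick $j\in\tau(K)\setminus\mathcal{I}(t^*)$, so that $\E\{X(t^*)X_j(t^*)\}=\nu_j(t^*)/2\ne 0$. By a Schur-complement calculation, the joint Gaussian density of $(X(t),\nabla X_{|K}(t))$ at $(u,0)$ factorises, up to a polynomially bounded prefactor, as $\exp\{-u^2/(2\nu^{\rm eff}(t))\}$, where
\begin{equation*}
	\nu^{\rm eff}(t)=\nu(t)-r_K(t)^T\La_K(t)^{-1}r_K(t),\qquad r_K(t)=\tfrac12\bigl(\nu_i(t)\bigr)_{i\in\tau(K)}.
\end{equation*}
At $t=t^*$, $r_K(t^*)\ne 0$ by the choice of $j$ and $\La_K(t^*)$ is positive definite by $({\bf H}2)$, whence $\nu^{\rm eff}(t^*)<\sigma_T^2$; continuity extends this strict inequality to a neighbourhood $U$ of $t^*$, and outside $U$ the uniqueness of $t^*$ gives $\nu^{\rm eff}(t)\le\nu(t)\le\sigma_T^2-\delta_0'$. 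This yields the uniform bound $p_{(X(t),\nabla X_{|K}(t))}(u,0)=o(e^{-u^2/(2\sigma_T^2)-\alpha u^2})$ on $K$, and the machinery of Sections \ref{sec:small}--\ref{sec:diff} absorbs the accompanying conditional determinant moment.

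Finally, on a surviving face $J$, I would compare the full indicator with its trimmed version. Their difference is supported on the event that $\varepsilon^*_\ell X_\ell(t)<0$ for some $\ell\notin\tau(J)\cup\mathcal{I}(t^*)$. The maximality of $\nu$ at $t^*\in\bar J$ with $t^*_\ell\in\{a_\ell,b_\ell\}$ forces $\varepsilon^*_\ell\nu_\ell(t^*)>0$, so the decomposition $X_\ell(t)=\tfrac{\nu_\ell(t)}{2\nu(t)}X(t)+Y_\ell(t)$ with $Y_\ell(t)$ independent of $X(t)$ gives $\E\{\varepsilon^*_\ell X_\ell(t)\mid X(t)=u\}=u\cdot\varepsilon^*_\ell\nu_\ell(t)/(2\nu(t))$, a strictly positive multiple of $u$ uniformly near $t^*$; hence $\P\{X(t)\ge u,\,\varepsilon^*_\ell X_\ell(t)<0\}$ carries an extra factor $e^{-\alpha' u^2}$ beyond $e^{-u^2/(2\sigma_T^2)}$, enough to swallow the difference into the global error. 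The main obstacle is maintaining the effective-variance gap uniformly in $t$ while simultaneously taming the conditional moments of $\det\nabla^2 X_{|K}(t)$ on $\{X(t)\ge u,\nabla X_{|K}(t)=0\}$; this is precisely where the super-exponential estimates of Sections \ref{sec:small}--\ref{sec:diff} do the heavy lifting.
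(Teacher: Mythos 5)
Your proposal is correct in substance, and it rests on exactly the two facts the paper uses: (i) on any face $K$ for which no point of $\bar K$ satisfies $\nu(t)=\sigma_T^2$ and $\nabla\nu_{|K}(t)=0$ (equivalently, $t^*\notin\bar K$ or $\tau(K)\not\subset\mathcal{I}(t^*)$), the conditional variance ${\rm Var}(X(t)\,|\,\nabla X_{|K}(t)=0)=\nu(t)-r_K(t)^T\La_K(t)^{-1}r_K(t)$ stays uniformly below $\sigma_T^2$, so the whole Kac--Rice term is super-exponentially small; and (ii) for $\ell\notin\mathcal{I}(t^*)$ with $t^*_\ell\in\{a_\ell,b_\ell\}$, one has $\varepsilon^*_\ell\nu_\ell(t^*)>0$, so the corresponding outward-derivative constraint is satisfied up to a super-exponentially small correction when $X(t)\geq u$ near $t^*$. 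Where you differ from the paper is the organization: you post-process the already-proved integral formula of Theorem \ref{Thm:MEC approximation je}, discarding faces and then trimming indicators term by term, whereas the paper goes back to the sandwich \eqref{Ineq:upperbound je}--\eqref{Ineq:lowerbound je}, replaces $M_u^E(J)$ by the modified count $M_u^{E^*}(J)$ on the surviving faces (using $M_u^E\le M_u^{E^*}\le M_u$, so the bounding structure is preserved automatically) and by $M_u(K)$ elsewhere, and then applies Proposition \ref{Prop:simlify the high moment je}; the paper's route avoids ever estimating the difference between the full and trimmed indicators explicitly, while yours makes that comparison the main new step. Your route buys a shorter derivation given Theorem \ref{Thm:MEC approximation je}, at the cost of having to justify the trimming carefully: there you should (a) compute the regression of $X_\ell(t)$ on the pair $(X(t),\nabla X_{|J}(t))$, not on $X(t)$ alone --- near $t^*$ this only perturbs the coefficient since $\E\{X(t)X_j(t)\}\to 0$ for $j\in\tau(J)\subset\mathcal{I}(t^*)$, so the conclusion stands --- and (b) control the determinant factor by conditioning on $(X(t),X_\ell(t),\nabla X_{|J}(t))$ and using polynomial bounds on the conditional moment, as in \eqref{Eq:disjoint faces 2}, rather than a bare Cauchy--Schwarz on the rare event, which would halve the exponential gain and need not remain $o(e^{-u^2/(2\sigma_T^2)-\alpha u^2})$. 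With these refinements your argument is a valid alternative proof.
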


Employing the Laplace method, we will provide refined explicit approximation results in Section \ref{sec:unique-max} under the assumptions in Theorem \ref{Thm:MEC approximation}. Furthermore, we demonstrate several examples that illustrate the evaluation of approximating excursion probabilities in Section \ref{sec:example}.

\section{Outline of the proofs}\label{sec:sketch}

Here we show the main idea for proving the main results above. Let $f$ be a smooth real-valued function, then $\sup_{t\in T} f(t) \geq u$ if and only if there exists at least one extended outward local maximum above $u$ on some face of $T$. Thus, under conditions $({\bf H}1)$ and $({\bf H}2)$, the following 
relation holds for each $u\in \R$:
\begin{equation}\label{Eq:maxima-faces}
\begin{split}
 \left\{\sup_{t\in T} X(t) \geq u \right\} = \bigcup_{k=0}^N \bigcup_{K\in \partial_k T}
\{M_u^E (K) \geq 1\} \quad {\rm a.s.}
\end{split}
\end{equation}
This implies that the probability of the supremum of the Gaussian random field exceeding $u$ is equal to the probability that there exists at least one extended outward local maximum above $u$ on some face $K$ of $T$. Therefore, we obtain the following upper bound for the excursion probability:
\begin{equation}\label{Ineq:upperbound je}
\begin{split}
 \P\left\{\sup_{t\in T} X(t) \geq u\right\}\leq \sum_{k=0}^N\sum_{K\in \partial_k T} \P\{M_u^E (K) \geq 1\}\leq \sum_{k=0}^N\sum_{K\in \partial_k T} \E \{M_u^E (K) \}.
\end{split}
\end{equation}
On the other hand, notice that
\begin{equation*}
\begin{split}
\E\{M_u^E (K) \} - \P\{M_u^E (K) \geq 1\} 
&= \sum_{i=1}^\infty (i-1)\P\{M_u^E (K)=i\}\\
& \leq \sum_{i=1}^\infty i(i-1)\P\{M_u^E (K)=i\}= \E \{M_u^E (K)[M_u^E (K)-1] \} 
\end{split}
\end{equation*}
and
\begin{equation*}
\begin{split}
\P\{M_u^E (K) \geq 1, M_u^E (K') \geq 1\}\le \E\{M_u^E (K) M_u^E (K') \}.
\end{split}
\end{equation*}
Applying the Bonferroni inequality to \eqref{Eq:maxima-faces} and combining these two inequalities, we 
obtain the following lower bound for the excursion probability:
\begin{equation}\label{Ineq:lowerbound je}
\begin{split}
&\quad \P \left\{\sup_{t\in T} X(t) \geq u \right\} \\
&\ge \sum_{k=0}^N\sum_{K\in \partial_k T} \P\{M_u^E (K) \geq 1\} - \sum_{K\neq K'} \P\{M_u^E (K) \geq 1, M_u^E (K') \geq 1\}  \\
&\ge \sum_{k=0}^N\sum_{K\in \partial_k T} \left(\E\{M_u^E (K) \} - \E \{M_u^E (K)[M_u^E (K)-1] \} \right)  - \sum_{K\neq K'} \E\{M_u^E (K) M_u^E (K') \},
\end{split}
\end{equation}
where the last sum is taken over all possible pairs of different faces $(K, K')$. 
\begin{remark}\label{remark:M_u}
	Note that, following the same arguments above, we have that the expectations on the number of extended outward maxima $M_u^E(\cdot)$ in both \eqref{Ineq:upperbound je} and \eqref{Ineq:lowerbound je} can be replaced by the expectations on the number of local maxima $M_u(\cdot)$.
\end{remark}

We call a function $h(u)$ \emph{super-exponentially small} [when compared with the excursion probability $\P \{\sup_{t\in T} X(t) \geq u \}$ or $\E\{\chi(A_u)\}$], if there exists a constant $\alpha >0$ such that $h(u) = o(e^{- u^2/{(2\sigma_T^2)-\alpha u^2}})$ as $u \to \infty$. The main idea for proving the EEC approximation Theorem \ref{Thm:MEC approximation je} consists of the following two steps: (i) show that, except for the upper bound in (\ref{Ineq:upperbound je}), all terms in the lower bound in (\ref{Ineq:lowerbound je}) are super-exponentially small; and (ii) demonstrate that the difference between the upper bound in (\ref{Ineq:upperbound je}) and $\E\{\chi(A_u)\}$ is also super-exponentially small. The proofs for Theorems \ref{Thm:MEC approximation je2} and \ref{Thm:MEC approximation} follow the same ideas, aiming to establish super-exponential smallness for the terms involved in the lower bounds, as well as for the difference between the upper bound and EEC.

\section{Estimation of super-exponential smallness for terms in the lower bound}\label{sec:small}

\subsection{Factorial moments}
We first state the following result, which is a modified version (restricted on a face $K$) of Lemma 4 in \citet{Piterbarg96}, characterizing the decaying rate for factorial moments of the number of critical points exceeding a high level for Gaussian fields.
\begin{lemma} \label{lemma:Piterbarg}
	Assume $({\bf H}1)$ and $({\bf H}2)$.
	Then there exists a positive constant $C$ such that for any $\varepsilon >0$ one can find a number $\varepsilon_1 >0$ such that for any $K\in\partial_k T$,
	\begin{equation}
		\begin{split}
			\E \{M_u (K)(M_u(K)- 1)\}\leq C u^{2k+1} \exp\bigg\{-\frac{u^2}{2\beta_K^2 +\varepsilon}\bigg\} + C u^{4k+2}\exp\bigg\{-\frac{u^2}{2\sigma_K^2 -\ep_1}\bigg\},
		\end{split}
	\end{equation}
	where
	$$\beta_K^2 = \sup_{t\in K} \sup_{e\in \S^{k-1}} {\rm Var} (X(t)|\nabla X_{|K}(t), \nabla^2 X_{|K}(t)e), \quad \sigma_K^2=\sup_{t\in K} {\rm Var} (X(t)).$$
\end{lemma}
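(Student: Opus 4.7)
The plan is to adapt Piterbarg's classical argument (Lemma 4 of \citet{Piterbarg96}) to the face $K$. The starting point is the Kac-Rice formula for factorial moments, which, under $({\bf H}1)$ and $({\bf H}2)$, expresses
\begin{equation*}
    \E\{M_u(K)(M_u(K)-1)\} = \int_{K\times K} A(t,s)\, p_{\nabla X_{|K}(t),\nabla X_{|K}(s)}(0,0)\, dt\, ds,
\end{equation*}
where $A(t,s)$ is the conditional expectation of $|\det \nabla^2 X_{|K}(t)\det \nabla^2 X_{|K}(s)|$ multiplied by the indicator that $X(t)\geq u$, $X(s)\geq u$ and both Hessians are negative definite, given $\nabla X_{|K}(t)=\nabla X_{|K}(s)=0$. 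I would then split $K\times K$ into a near-diagonal region $D_\delta=\{(t,s):\|t-s\|\leq \delta\}$ and its complement, with $\delta$ to be chosen small.

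On the far-diagonal region, $({\bf H}2)$ guarantees that the joint Gaussian vector $(X(t),X(s),\nabla X_{|K}(t),\nabla X_{|K}(s))$ is non-degenerate uniformly in $\{(t,s)\in K\times K:\|t-s\|\geq \delta\}$. Standard Gaussian estimates then bound the conditional probability of $\{X(t)\geq u,X(s)\geq u\}$ by an exponential in $-u^2/(2\sigma_K^2-\ep_1)$ for some $\ep_1=\ep_1(\delta)>0$, while the conditional expectation of the product of Hessian determinants is $O(u^{2k})$; together with the $O(u^2)$ factor obtained from converting the Gaussian tail into a uniform exponential (or, equivalently, integrating out the two values of the field), this yields the second term $Cu^{4k+2}\exp\{-u^2/(2\sigma_K^2-\ep_1)\}$.

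On the near-diagonal region I would change variables to $(t,r,e)$ with $r=\|s-t\|\in(0,\delta]$ and $e=(s-t)/r\in \S^{k-1}$, paying the Jacobian $r^{k-1}$. A Taylor expansion of the gradient along $K$ gives $\nabla X_{|K}(s)=\nabla X_{|K}(t)+r\nabla^2 X_{|K}(t)e+O(r^{1+\delta})$ by the H\"older hypothesis in $({\bf H}1)$, so conditioning jointly on $\nabla X_{|K}(t)=0$ and $\nabla X_{|K}(s)=0$ is, after a non-singular linear change of Gaussian variables, equivalent to conditioning on $\nabla X_{|K}(t)=0$ and $\nabla^2 X_{|K}(t)e\approx 0$. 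Consequently the conditional variance of $X(t)$ is at most $\beta_K^2+o(1)$ as $r\to 0$, uniformly in $(t,e)$, so $\P(X(t)\geq u\mid \nabla X_{|K}(t)=0,\nabla X_{|K}(s)=0)\leq \exp\{-u^2/(2\beta_K^2+\ep)\}$ for small enough $\delta$. Combining this with the bounds $A(t,s)\leq Cu^{2k}$ (the Hessians again contribute $O(u^k)$ each under the conditioning) and the joint density $p_{\nabla X_{|K}(t),\nabla X_{|K}(s)}(0,0)=O(r^{-k})$ (which cancels the Jacobian up to one factor of $r$), I integrate in $r$ over $(0,\delta]$ to obtain $Cu^{2k+1}\exp\{-u^2/(2\beta_K^2+\ep)\}$ after absorbing polynomial factors and the volume of $K$ and $\S^{k-1}$.

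The main obstacle will be the near-diagonal estimate: making the Taylor-expansion-plus-Gaussian-conditioning argument uniform in $t\in K$ and $e\in \S^{k-1}$, especially verifying that the conditional density $p_{\nabla X_{|K}(t),\nabla X_{|K}(s)}(0,0)$ has the claimed blow-up rate $O(r^{-k})$ and that the degenerate conditioning as $r\downarrow 0$ can be controlled by a diagonalization that identifies the Gaussian conditional variance of $X(t)$ with $\beta_K^2+o(1)$. All other steps are routine once these uniformities are in hand, and the construction of $\ep_1$ from $\ep$ and $\delta$ follows from the continuity and strict positivity of the joint covariance determinants on the compact set $\{(t,s)\in K\times K:\|t-s\|\geq \delta\}$.
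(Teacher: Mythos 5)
Your overall route is the right one, and in fact it is the classical one: the paper does not prove this lemma at all, it simply quotes it as a face-restricted version of Lemma 4 in Piterbarg (1996), and that lemma is proved exactly by the Kac--Rice second factorial moment formula plus a near-diagonal/far-diagonal split of $K\times K$, which is what you set up. Your far-diagonal treatment (uniform non-degeneracy from $({\bf H}2)$ on the compact set $\{\|t-s\|\ge\delta\}$, joint exceedance bounded with variance $2\sigma_K^2-\ep_1$, polynomial control of the conditional Hessian determinants) is sound, as is the identification of $\beta_K^2$ through the observation that conditioning on $\nabla X_{|K}(t)=\nabla X_{|K}(s)=0$ with $s=t+re$ is, after a linear change of variables, conditioning on $\nabla X_{|K}(t)=0$ and (approximately) $\nabla^2 X_{|K}(t)e=0$.

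There is, however, a concrete gap in your near-diagonal bookkeeping. You record the Jacobian $r^{k-1}$ and the density blow-up $p_{\nabla X_{|K}(t),\nabla X_{|K}(s)}(0,0)=O(r^{-k})$, and then propose to ``integrate in $r$ over $(0,\delta]$''; but $r^{k-1}\cdot r^{-k}=r^{-1}$ is not integrable at $0$, so the estimate as written diverges and cannot produce the bound $Cu^{2k+1}\exp\{-u^2/(2\beta_K^2+\ep)\}$. The missing ingredient is precisely where the mean-square H\"older condition in $({\bf H}1)$ enters: on the event $\{\nabla X_{|K}(t)=\nabla X_{|K}(s)=0\}$ one has $\nabla^2 X_{|K}(t)e=-r^{-1}\bigl(\nabla X_{|K}(s)-\nabla X_{|K}(t)-r\nabla^2 X_{|K}(t)e\bigr)$, whose conditional second moments are $O(r^{2\delta})$ by $({\bf H}1)$; hence one column of $\nabla^2 X_{|K}(t)$ (in an orthonormal basis containing $e$) is small, and the conditional expectation of $|\det\nabla^2 X_{|K}(t)\det\nabla^2 X_{|K}(s)|$ carries an extra factor $O(r^{\delta})$ (uniformly in $t,e$ and in the conditioning values). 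This turns the radial integral into $\int_0^\delta r^{-1+\delta}\,dr<\infty$ and is the step that makes the near-diagonal term finite; you invoke the Taylor expansion only to control ${\rm Var}(X(t)\mid\cdot)\le\beta_K^2+o(1)$, but never to extract this decay of the determinant, and without it the argument fails. Once that is added (and the uniformity you already flag is checked via the non-degeneracy of $(\nabla X_{|K}(t),\nabla^2 X_{|K}(t)e)$ guaranteed by $({\bf H}2)$), the rest of your outline goes through and reproduces the cited bound.
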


The following result shows that the factorial moments in \eqref{Ineq:lowerbound je} are super-exponentially small under our assumptions.
\begin{proposition}\label{Lem:factorial moments je} Let $\{X(t),\, t\in T\}$ be a centered Gaussian random field satisfying $({\bf H}1)$, $({\bf H}2)$ and $({\bf H}3)$. Then there exists $\alpha>0$ such that as $u\to \infty$,
	\begin{equation}
		\sum_{k=0}^N\sum_{K\in\partial_k T} \E \{M_u (K)(M_u (K) -1) \} = o\left(e^{ - u^2/(2\sigma_T^2) -\alpha u^2 }\right).
	\end{equation}
\end{proposition}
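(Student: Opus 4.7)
The plan is to invoke Lemma~\ref{lemma:Piterbarg} on each face $K\in\partial_k T$ and show that both exponential terms it produces are super-exponentially small in the sense defined above. Since $\partial_k T$ is finite and the case $k=0$ is trivial---a zero-dimensional face $K$ has $M_u(K)\in\{0,1\}$, so $M_u(K)(M_u(K)-1)\equiv 0$---it suffices to treat each face with $k\ge 1$ individually and then absorb the finitely many resulting gaps into a common $\alpha>0$.

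The second summand $Cu^{4k+2}\exp\{-u^2/(2\sigma_K^2-\varepsilon_1)\}$ is the easy one: the trivial bound $\sigma_K^2\le\sigma_T^2$ together with the $\varepsilon_1>0$ guaranteed by Lemma~\ref{lemma:Piterbarg} pushes the exponent strictly below $-u^2/(2\sigma_T^2)$ by a positive multiple of $u^2$. The first summand $Cu^{2k+1}\exp\{-u^2/(2\beta_K^2+\varepsilon)\}$ requires the strict inequality $\beta_K^2<\sigma_T^2$; granted this, one may choose $\varepsilon$ small enough that $2\beta_K^2+\varepsilon<2\sigma_T^2$, forcing the exponent below $-u^2/(2\sigma_T^2)-\alpha u^2$ for some $\alpha>0$.

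The main obstacle is therefore establishing $\beta_K^2<\sigma_T^2$ for each face $K$ of dimension $k\ge 1$. I would handle this by a compactness argument applied to
$$f(t,e):=\text{Var}\bigl(X(t)\bigm|\nabla X_{|K}(t),\nabla^2 X_{|K}(t)e\bigr)$$
on the compact set $\overline{K}\times\S^{k-1}$; continuity (including at boundary points of $\overline{K}$) is ensured by the single-point non-degeneracy supplied by $({\bf H}2)$. The claim is that $f(t,e)<\sigma_T^2$ pointwise, from which strict inequality of the sup follows. When $\nu(t)<\sigma_T^2$ this is immediate from $f(t,e)\le\nu(t)$. When $\nu(t)=\sigma_T^2$, I split into two subcases. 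If some $\nu_i(t)\ne 0$ with $i\in\tau(K)$, then $\E\{X(t)X_i(t)\}=\nu_i(t)/2\ne 0$ shows the covariance vector of $X(t)$ with the conditioning variables is nonzero, strictly dropping the conditional variance below $\nu(t)$. Otherwise $\nu_i(t)=0$ for every $i\in\tau(K)$, i.e.\ $\tau(K)\subset\mathcal{I}(t)$, and Proposition~\ref{prop:H3} under $({\bf H}3)$ yields $(\E\{X(t)X_{ij}(t)\})_{i,j\in\mathcal{I}(t)}\prec 0$; its principal submatrix is $\Sigma_K(t)$, so $\Sigma_K(t)\prec 0$, giving $\Sigma_K(t)e\ne 0$ for every unit $e$, which again makes the covariance of $X(t)$ with $\nabla^2 X_{|K}(t)e$ nonzero and forces $f(t,e)<\sigma_T^2$. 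Combining pointwise strict inequality with continuity and compactness yields $\beta_K^2<\sigma_T^2$, and summing the resulting super-exponential bounds over the finitely many faces with a common $\alpha$ completes the argument.
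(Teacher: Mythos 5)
Your argument is correct and follows essentially the same route as the paper's proof: reduce via Lemma~\ref{lemma:Piterbarg} to showing $\beta_K^2<\sigma_T^2$ for each face, and establish this at points with $\nu(t)=\sigma_T^2$ by using $({\bf H}3)$ through Proposition~\ref{prop:H3} to get $\Sigma_K(t)\prec 0$, hence $\Sigma_K(t)e\neq 0$, while points with $\nu(t)<\sigma_T^2$ or with some $\nu_i(t)\neq 0$, $i\in\tau(K)$, are handled by the strict drop in conditional variance. Your explicit treatment of the $k=0$ faces and of the compactness/continuity step on $\bar K\times\S^{k-1}$ only makes precise what the paper leaves implicit.
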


\begin{proof} Due to Lemma \ref{lemma:Piterbarg}, it suffices to show that for each $K\in\partial_k T$,  $\beta_K^2<\sigma_T^2$, which is equivalent to ${\rm Var} (X(t)|\nabla X_{|K}(t), \nabla^2 X_{|K}(t)e) <\sigma_T^2$ for all $t \in \bar{K} = K \cup \partial K$ and $e\in \S^{k-1}$. Suppose ${\rm Var} (X(t)|\nabla X_{|K}(t), \nabla^2 X_{|K}(t)e) =\sigma_T^2$ for some $t \in K$, then
	\begin{equation*}
		\sigma_T^2={\rm Var} (X(t)|\nabla X_{|K}(t), \nabla^2 X_{|K}(t)e) \leq {\rm Var} (X(t)|\nabla^2 X_{|K}(t)e) \leq {\rm Var} (X(t)) \leq \sigma_T^2.
	\end{equation*}
	Note that
	\begin{equation*}
		\begin{split}
			\quad {\rm Var} (X(t)|\nabla^2 X_{|K}(t)e) = {\rm Var} (X(t))\Leftrightarrow \E \{X(t)(\nabla^2 X_{|K}(t)e)\}=0 \Leftrightarrow \Sigma_K(t)e=0.
		\end{split}
	\end{equation*}
	But $t$ is a point with $\nu(t)=\sigma_T^2$, thus $\Sigma_K(t)\prec 0$ by Proposition \ref{prop:H3}, implying $\Sigma_K(t)e \neq 0$ for all $e\in \S^{k-1}$ and causing a contradiction.
	
	On the other hand, suppose ${\rm Var} (X(t)|\nabla X_{|K}(t), \nabla^2 X_{|K}(t)e) =\sigma_T^2$ for some $t \in \partial K$, then ${\rm Var} (X(t)|\nabla X_{|K}(t)) =\sigma_T^2$ and hence $\nu_i(t)=0$ for all $i\in \tau (K)$, implying $\Sigma_K(t)\prec 0$ by Proposition \ref{prop:H3}. Similarly to the previous arguments, this will lead to a contradiction. The proof is completed.
\end{proof}

\subsection{Non-adjacent faces}

For two sets $D, D' \subset \R^N$, let $d(D,D')=\inf\{\|t-t'\|: t\in D, t'\in D'\}$ denote their distance. The following 
result demonstrates that the last two sums involving the joint moment of two non-adjacent faces in \eqref{Ineq:lowerbound je} 
are super-exponentially small.
\begin{proposition}\label{Lem:cross terms disjoint sets} Let $\{X(t),\, t\in T\}$ be a centered Gaussian random field satisfying $({\bf H}1)$ and $({\bf H}2)$. Then there exists $\alpha>0$ such that as $u\to \infty$,
	\begin{equation}\label{Eq:crossing terms disjoint sets}
	\begin{split}
	\E\{M_u (K) M_u (K')\} &= o\left( \exp \left\{ -\frac{u^2}{2\sigma_T^2} -\alpha u^2 \right\}\right),
	\end{split}
	\end{equation}
	where $K$ and $K'$ are different faces of $T$ with $d(K,K')>0$.
\end{proposition}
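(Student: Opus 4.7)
The plan is to apply the Kac-Rice metatheorem to $\E\{M_u(K)M_u(K')\}$ and then exploit the strict separation $d(K,K')>0$ together with the non-degeneracy condition $({\bf H}2)$ to produce a uniform exponential gain over $\exp(-u^2/(2\sigma_T^2))$. Bounding $M_u$ by the count of critical points of $X$ restricted to the face that exceed $u$, the joint Kac-Rice formula yields
\begin{equation*}
	\E\{M_u(K)M_u(K')\} \leq \int_{K\times K'} \E\!\big\{|\det\nabla^2 X_{|K}(t)|\,|\det\nabla^2 X_{|K'}(t')|\,\mathbbm{1}_{\{X(t)\geq u,\,X(t')\geq u\}}\,\big|\,\mathcal{G}_{t,t'}\big\}\,p_{t,t'}(0,0)\,dt\,dt',
\end{equation*}
where $\mathcal{G}_{t,t'}=\{\nabla X_{|K}(t)=0,\,\nabla X_{|K'}(t')=0\}$ and $p_{t,t'}$ is the joint density of the two gradients. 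Because $d(K,K')>0$, the pair $(t,t')$ ranges over the compact set $\bar K\times\bar K'$ on which $t\neq t'$, so by $({\bf H}2)$ the joint Gaussian law of all the values, gradients, and second derivatives at $t$ and $t'$ is uniformly non-degenerate; in particular $p_{t,t'}(0,0)$ is bounded.

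The heart of the argument is a strict variance bound. Set $\sigma^2(t,t'):={\rm Var}(X(t)+X(t')\,|\,\mathcal{G}_{t,t'})$. Conditioning only decreases variance, so
\begin{equation*}
	\sigma^2(t,t')\leq {\rm Var}(X(t)+X(t'))\leq (\sigma_t+\sigma_{t'})^2\leq 4\sigma_T^2,
\end{equation*}
and equality throughout would force $\sigma_t=\sigma_{t'}=\sigma_T$ together with ${\rm Corr}(X(t),X(t'))=1$, i.e., $X(t)=X(t')$ almost surely, which $({\bf H}2)$ rules out whenever $t\neq t'$. Hence $\sigma^2(t,t')<4\sigma_T^2$ pointwise, and continuity on the compact set $\bar K\times\bar K'$ delivers a constant $\eta>0$ with $\sigma^2(t,t')\leq 4\sigma_T^2-\eta$ uniformly. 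The Gaussian tail then gives, for some $\alpha>0$,
\begin{equation*}
	\P\{X(t)\geq u,\,X(t')\geq u\,|\,\mathcal{G}_{t,t'}\}\leq \P\{X(t)+X(t')\geq 2u\,|\,\mathcal{G}_{t,t'}\}\leq \exp\!\Big(-\tfrac{2u^2}{4\sigma_T^2-\eta}\Big)\leq \exp\!\Big(-\tfrac{u^2}{2\sigma_T^2}-2\alpha u^2\Big).
\end{equation*}

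To handle the Hessian determinants inside the indicator, I would condition further on $(X(t),X(t'))=(x_1,x_2)$. Under $\mathcal{G}_{t,t'}$ the Hessians are jointly Gaussian with means affine in $(x_1,x_2)$ and covariance continuous on $\bar K\times\bar K'$, so the conditional expectation of $|\det\nabla^2 X_{|K}(t)|\,|\det\nabla^2 X_{|K'}(t')|$ is a polynomial in $(x_1,x_2)$ whose coefficients are uniformly bounded. Integrating this polynomial against the bivariate Gaussian density of $(X(t),X(t'))|\mathcal{G}_{t,t'}$ restricted to $\{x_1\geq u,\,x_2\geq u\}$ produces at worst a polynomial-in-$u$ prefactor times the tail probability just estimated; absorbing the polynomial into $\exp(-\alpha u^2)$ shrinks $2\alpha$ to some $\alpha>0$ and yields the integrand bound $\exp(-u^2/(2\sigma_T^2)-\alpha u^2)$ uniformly on $K\times K'$. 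Integrating over the bounded region then preserves the exponential rate and proves \eqref{Eq:crossing terms disjoint sets}.

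The main obstacle is the strict uniform variance bound $\sigma^2(t,t')\leq 4\sigma_T^2-\eta$: per-point strictness follows from the non-degeneracy of $(X(t),X(t'))$ for $t\neq t'$, and the uniform gap then comes from continuity on the compact set $\bar K\times\bar K'$, which is exactly where the assumption $d(K,K')>0$ is used. Once this gap is in hand, everything else reduces to a routine polynomial-against-Gaussian-tail estimate.
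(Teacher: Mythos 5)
Your proposal is correct and follows essentially the same route as the paper: a joint Kac--Rice bound, a polynomial bound on the conditional expectation of the Hessian determinants, uniform non-degeneracy of the gradient density from $({\bf H}2)$ and compactness, and an exponential gain coming from a strict variance/correlation gap for $X(t)+X(t')$ on $\bar K\times\bar K'$ (your uniform bound ${\rm Var}(X(t)+X(t')\,|\,\mathcal{G}_{t,t'})\le 4\sigma_T^2-\eta$ is equivalent to the paper's $\sup_{t\in K,t'\in K'}{\rm Corr}(X(t),X(t'))<1$). The remaining differences are only bookkeeping of which variables are conditioned on, so no substantive gap remains.
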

\begin{proof} 
Consider first the case where ${\rm dim}(K) =k \geq 1$ and ${\rm dim}(K') =k' \geq 1$. By the Kac-Rice formula for high moments \cite{Adler:2007}, we have 
	\begin{equation}\label{Eq:disjoint faces}
	\begin{split}
	&\quad \E\{M_u (K) M_u (K') \}\\
	&= \int_{K} dt \int_{K'} dt'\, \E\big \{ |{\rm det} \nabla^2 X_{|K}(t) | |{\rm det} \nabla^2 X_{|K'}(t') | \mathbbm{1}_{\{X(t)\geq u, X(t')\geq u\}}\\
	&\quad \times  \mathbbm{1}_{\{\nabla^2 X_{|K}(t) \prec 0, \, \nabla^2 X_{|K'}(t') \prec 0\}} \big|\nabla X_{|K}(t)=0, \nabla X_{|K'}(t')=0 \big\}p_{\nabla X_{|K}(t), \nabla X_{|K'} (t')}(0,0) \\
	&\leq \int_{K} dt \int_{K'} dt' \int_u^\infty dx \int_u^\infty dx' \, p_{X(t), X(t')}(x,x')p_{\nabla X_{|K}(t), \nabla X_{|K'} (t')}(0,0|X(t)=x, X(t')=x')\\
	&\quad \times \E\big \{ |{\rm det} \nabla^2 X_{|K}(t) | |{\rm det} \nabla^2 X_{|K'}(t') |\big | X(t)=x, X(t')=x', \nabla X_{|K}(t)=0, \nabla X_{|K'}(t')=0 \} .
	\end{split}
	\end{equation}
	Notice that the following two inequalities hold: for constants $a_{i_1}$ and $b_{i_2}$,
	\begin{equation*}
	\begin{split}
	&\prod_{i_1=1}^k |a_{i_1}| \prod_{i_2=1}^{k'} |b_{i_2}| \leq \frac{\sum_{i_1=1}^k |a_{i_1}|^{k+k'} 
	+ \sum_{i_2=1}^{k'} |b_{i_2}|^{k+k'}}{k+k'};
	\end{split}
	\end{equation*}
	and for any Gaussian variable $\xi$ and positive integer $m$, by Jensen's inequality,
	\begin{equation*}
	\begin{split}
	\E |\xi|^m \leq \E (|\E\xi|+|\xi-\E\xi|)^m &\leq 2^{m-1} (|\E\xi|^m + \E |\xi-\E\xi|^m)= 2^{m-1} (|\E\xi|^m + B_m({\rm Var}(\xi))^{m/2}),
	\end{split}
	\end{equation*}
	where $B_m$ is some constant depending only on $m$. Combining these two inequalities with the well-known conditional 
	formula for Gaussian variables, we obtain that there exist positive constants $C_1$ and $N_1$ such that for sufficiently large $x$ and $x'$,
	\begin{equation}\label{Eq:disjoint faces 2}
	\begin{split}
	\sup_{t\in K, t'\in K'}&\E\big \{ |{\rm det} \nabla^2 X_{|K}(t) | |{\rm det} \nabla^2 X_{|K'}(t') | \big| X(t)=x, X(t')=x',\nabla X_{|K}(t)=0, \nabla X_{|K'}(t')=0 \big\}\\
	 & \leq C_1+(xx')^{N_1}.
	\end{split}
	\end{equation}
	Further, there exists $C_2>0$ such that
	\begin{equation}\label{Eq:disjoint faces 3}
	\begin{split}
	&\quad \sup_{t\in K, t'\in K'} p_{\nabla X_{|K}(t), \nabla X_{|K'} (t')}(0,0|X(t)=x, X(t')=x')\\
	& \leq  \sup_{t\in K, t'\in K'}(2\pi)^{-(k+k')/2}[{\rm det Cov} (\nabla X_{|K}(t), \nabla X_{|K'} (t') | X(t)=x, X(t')=x')]^{-1/2} \\
	& \leq  C_2.
	\end{split}
	\end{equation}
Plugging \eqref{Eq:disjoint faces 2} and \eqref{Eq:disjoint faces 3} into \eqref{Eq:disjoint faces}, we obtain that there exists $C_3$ such that, for $u$ large enough,
\begin{equation}\label{Eq:disjoint faces 4}
	\begin{split}
\E\{M_u (K) M_u (K')\} 
&\le C_3\sup_{t\in K, t'\in K'} \E\{(C_1+|X(t)X(t')|^{N_1}) \mathbbm{1}_{\{X(t) \geq u, X(t') \geq u\}} \}\\
&\le C_3\sup_{t\in K, t'\in K'} \E\{(C_1+(X(t)+X(t'))^{2N_1}) \mathbbm{1}_{\{[X(t)+X(t')]/2 \geq u\}} \}\\
&\le C_3 \exp\left( -\frac{u^2}{(1+\rho)\sigma_T^2} + \ep u^2\right),
\end{split}
\end{equation}
where $\ep$ is any positive number and $\rho=\sup_{t\in K, t'\in K'}{\rm Corr}[X(t), X'(t)]<1$ due to $({\bf H}2)$. The case when one of the dimensions 
of $K$ and $K'$ is zero can be proved similarly.
\end{proof}

\subsection{Adjacent faces}

The following result shows that the last two sums involving the joint moment of two adjacent faces in \eqref{Ineq:lowerbound je} are super-exponentially small.
\begin{proposition}\label{Lem:cross terms je} Let $\{X(t),\, t\in T\}$ be a centered Gaussian random field satisfying $({\bf H}1)$, $({\bf H}2)$ and $({\bf H}3)$. Then there exists $\alpha>0$ such that as $u\to \infty$,
	\begin{equation}\label{Eq:crossing terms je}
	\begin{split}
	\E\{M_u^E (K) M_u^E (K')\} &= o\left( \exp \left\{ -\frac{u^2}{2\sigma_T^2} -\alpha u^2 \right\}\right),
	\end{split}
	\end{equation}
where $K$ and $K'$ are different faces of $T$ with $d(K,K')=0$.
\end{proposition}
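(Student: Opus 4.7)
The plan is to mimic the proof of Proposition~\ref{Lem:cross terms disjoint sets} but to exploit the extended outward conditions, which were irrelevant there, in order to compensate for the fact that $d(K,K')=0$ allows ${\rm Corr}(X(t),X(t'))$ to approach~$1$ as $t,t'$ both tend to a common limit point $t^*\in\bar K\cap\bar{K'}$. After writing $\E\{M_u^E(K)M_u^E(K')\}$ via the Kac--Rice formula as in~\eqref{Eq:disjoint faces}, but now keeping the two extended outward indicators inside the integrand, I would fix a small $\delta>0$ and split the double integral according to whether $\|t-t'\|\ge\delta$ (``far'') or $\|t-t'\|<\delta$ (``near'').

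On the far region, $({\bf H}2)$ and compactness of $\bar K\times\bar{K'}$ give $\rho_\delta:=\sup_{\|t-t'\|\ge\delta}{\rm Corr}(X(t),X(t'))<1$, so dropping the extended outward indicators (bounded by $1$) and replaying the argument from~\eqref{Eq:disjoint faces}--\eqref{Eq:disjoint faces 4} produces a bound of the form $\exp(-u^2/((1+\rho_\delta)\sigma_T^2)+\varepsilon u^2)$, which is super-exponentially small for $\varepsilon$ small enough. The substance is the near region. The key combinatorial observation is that $K\neq K'$ with $d(K,K')=0$ forces $\tau(K)\neq\tau(K')$: if $\tau(K)=\tau(K')$ the two faces would differ only in some $\varepsilon_j$ with $j\notin\tau(K)$, and this would force $|t_j-t'_j|=b_j-a_j>0$ for every $(t,t')\in K\times K'$, contradicting adjacency. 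Pick $j\in\tau(K)\setminus\tau(K')$: the condition $\nabla X_{|K}(t)=0$ then gives $X_j(t)=0$, while the extended outward condition on $K'$ gives $\varepsilon_j^* X_j(t')\ge 0$. Combining these with the mean-square Taylor estimate $X_j(t')-X_j(t)=O(\|t-t'\|)$ (ensured by $({\bf H}1)$) pins both $X_j(t)$ and $X_j(t')$ into a window of radius $O(\delta)$ around zero---an extra constraint absent from Proposition~\ref{Lem:cross terms disjoint sets}.

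The remaining task is to convert this extra constraint into a reduction of the conditional variance of $X(t)$. At a common limit point $t^*\in\bar K\cap\bar{K'}$ with $\nu(t^*)<\sigma_T^2$ continuity already supplies the super-exponential gain, so the substantive subcase is $\nu(t^*)=\sigma_T^2$. If $j\notin\mathcal I(t^*)$, then $\E\{X(t^*)X_j(t^*)\}=\nu_j(t^*)/2\neq 0$, so the added constraint $X_j(t)\approx 0$ strictly lowers ${\rm Var}(X(t))$ below $\sigma_T^2$ on a neighborhood of $t^*$ and yields the super-exponential bound by a Gaussian tail estimate; if $j\in\mathcal I(t^*)$, the first-order effect vanishes and one falls back on Proposition~\ref{prop:H3} to exploit the negative definiteness $\Sigma_{\mathcal I(t^*)}(t^*)\prec 0$, using the Hessian factor $\nabla^2 X_{|K}(t)\prec 0$ inside the Kac--Rice integrand to obtain the same variance reduction from second-order information. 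A finite cover of the compact set $\bar K\cap\bar{K'}$ upgrades these local gains to a uniform super-exponential bound on the near region. The main obstacle will be the $j\in\mathcal I(t^*)$ subcase: converting the qualitative negative-definiteness of $\Sigma_{\mathcal I(t^*)}(t^*)$ into an explicit, quantitative reduction of the conditional variance of $X(t)$ that survives integration against the possibly singular Kac--Rice density on the shrinking near region.
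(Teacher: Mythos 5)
Your far/near splitting, the treatment of the far region via $\rho_\delta<1$, and your identification of the two driving ingredients (the extended outward sign constraints and $({\bf H}3)$ through Proposition \ref{prop:H3}) are consistent with the paper's strategy; your ``$j\notin\mathcal I(t^*)$'' variance-reduction step corresponds roughly to the paper's Case (i), where the set $\mathcal M_0=\{t\in\bar K\cap\bar{K'}:\nu(t)=\sigma_T^2,\ \nu_i(t)=0\ \forall i\in\tau(K)\cup\tau(K')\}$ is empty and conditioning on the two gradients already pulls the conditional variance below $\sigma_T^2$. The genuine gap is exactly the subcase you flag as the ``main obstacle'': $j\in\mathcal I(t^*)$, i.e.\ the paper's Case (ii) with $\mathcal M_0\neq\emptyset$. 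There the mechanism you propose --- extracting a reduction of ${\rm Var}(X(t)\mid\cdot)$ ``from second-order information'' using the indicator $\nabla^2X_{|K}(t)\prec0$ --- does not work: near a point of $\mathcal M_0$ all covariances $\E\{X(t)X_i(t)\}$ with $i\in\tau(K)\cup\tau(K')$ vanish, so conditioning on $\nabla X_{|K}(t)=0,\nabla X_{|K'}(t')=0$ does not push the conditional variance of $X(t)$ uniformly below $\sigma_T^2$, and in fact the paper's proof discards the Hessian-sign indicators entirely (they are bounded by $1$ in \eqref{Eq:cross term je}); the Hessian negativity never supplies the gain.

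What closes this case in the paper is a directional decomposition that your single fixed index $j$ cannot replace. Writing $e_{t,t'}=(t'-t)/\|t'-t\|$ and using $({\bf H}3)$/Proposition \ref{prop:H3} to get $\La_{K\cup K'}(t)=-\E\{X(t)\nabla^2X_{|K\cup K'}(t)\}\succ0$ near $\mathcal M_0$, one has $\langle e_{t,t'},\La_{K\cup K'}(t)e_{t,t'}\rangle\ge\alpha_0>0$, and a neighborhood of $\mathcal M_0$ is covered by sets $D_0,D_{m+1},\dots,D_{k+k'-m}$ according to which coordinate of $\La_{K\cup K'}(t)e_{t,t'}$ carries the mass. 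On $D_0$ (common tangential directions) the covariance $\E\{X(t)X_i(t')\}$ is of exact order $\|t'-t\|$, and after normalizing the increment by $\|t'-t\|$ this yields a conditional-variance drop that is uniform in $(t,t')$ even as $\|t'-t\|\to0$; on $D_i$ (directions in $\tau(K)\,\triangle\,\tau(K')$) the gain comes not from a variance reduction at all but from the conflict between the extended outward constraint $\varepsilon_i^*X_i(t')\ge0$ and the conditional mean of $X_i(t')$ given $X(t)=x\ge u$ and the gradients, which is of order $-x\|t'-t\|\alpha_i(t,t')/\sigma_T^2$ with $\alpha_i$ bounded away from zero. Which coordinate is ``active'' depends on the direction $e_{t,t'}$ and varies over the near region, which is precisely why the covering argument is needed; a fixed $j$ chosen in advance can have $\alpha_j(t,t')\approx0$ for many pairs. (Two smaller points: $\tau(K)\setminus\tau(K')$ can be empty, e.g.\ when $\tau(K)\subsetneq\tau(K')$, so your choice of $j$ needs the symmetric alternative; and the mean-square H\"older bound gives $X_j(t)-X_j(t')$ small only in $L^2$, not a deterministic window, though this is a wording issue rather than the core problem.) As written, the decisive estimate is left open, and the route you indicate would not produce it.
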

\begin{proof} Let $I:= \bar{K}\cap \bar{K'}$, which is nonempty since $d(K,K')=0$. To simplify notation, let us assume without loss of generality:
	\begin{equation*}
	\begin{split}
	\tau(K) &= \{1, \ldots, m, m+1, \ldots, k\},\\
	\tau(K')&= \{1, \ldots, m, k+1, \ldots, k+k'-m\},
	\end{split}
	\end{equation*}
	where $0 \leq m \leq k \leq k' \leq N$ and $k'\geq 1$. If $k=0$, we conventionally consider $\tau(K)= \emptyset$. Under this assumption, $K\in \partial_k T$, $K'\in \partial_{k'} T$, $\text{dim} (I) =m$, and all elements in $\ep(K)$ and $\ep(K')$ are 1.
	
	We first consider the case when $k\geq 1$ and $l\ge 1$. By the Kac-Rice formula,
	\begin{equation}\label{Eq:cross term je}
	\begin{split}
	&\quad \E\{M_u^E (K) M_u^E (K') \}\\
	&\leq \int_{K} dt\int_{K'} dt'\int_u^\infty dx \int_u^\infty dx'  \int_0^\infty dz_{k+1} \cdots \int_0^\infty dz_{k+k'-m} \int_0^\infty dw_{m+1} \cdots \int_0^\infty dw_{k}\\
	&\quad \E\big \{ |\text{det} \nabla^2 X_{|K}(t) ||\text{det} \nabla^2 X_{|K'}(t') | \big| X(t)=x, X(t')=x',  \nabla X_{|K}(t)=0,  X_{k+1}(t)=z_{k+1}, \\
	& \quad  \ldots, X_{k+k'-m}(t)=z_{k+k'-m},  \nabla X_{|K'}(t')=0, X_{m+1}(t')=w_{m+1}, \ldots, X_{k}(t')=w_{k} \big\}\\
	& \quad \times p_{t,t'}(x,x',0, z_{k+1}, \ldots,z_{k+k'-m}, 0, w_{m+1}, \ldots, w_{k})\\
	&:= \int \int \int_{K\times K'} A(t,t',u)\,dtdt',
	\end{split}
	\end{equation}
	where $p_{t,t'}(x,x',0, z_{k+1}, \ldots,z_{k+k'-m}, 0, w_{m+1}, \ldots, w_{k})$ is the density of the joint distribution of the variables involved in the given condition. We define 
	\begin{equation}\label{Eq:M0}
	\begin{split}
	\mathcal{M}_0:=\{t\in I :\, \nu(t)=\sigma_T^2, \, \nu_i(t)=0, \ \forall i=1,\ldots, k+k'-m\},
	\end{split}
	\end{equation}
	and consider two cases for $\mathcal{M}_0$.
	
	\textbf{Case (i): $\bm{\mathcal{M}_0 = \emptyset}$.} Under this case, since $I$ is a compact set, by the uniform continuity of conditional 
	variance, there exist constants $\ep_1, \delta_1>0$ such that
	\begin{equation}\label{Eq:super-exponentially small}
	\begin{split}
	&\sup_{t\in B(I, \delta_1), \, t'\in B'(I, \delta_1)} {\rm Var} ( X(t) |\nabla X_{|K}(t), \nabla X_{|K'} (t'))\leq \sigma_T^2 - \ep_1,
	\end{split}
	\end{equation}
	where $B(I, \delta_1)=\{t\in K: d(t, I)\le \delta_1\}$ and $B'(I, \delta_1)=\{t'\in K': d(t', I)\le \delta_1\}$. By partitioning $K\times K'$ into $B(I, \delta_1) \times B'(I, \delta_1)$ and $(K\times K')\backslash (B(I, \delta_1) \times B'(I, \delta_1))$ and applying the Kac-Rice formula, we obtain
	\begin{equation}\label{Eq:Mu3}
	\begin{split}
	&\quad \E\{M_u (K) M_u (K') \}\\
	&\leq \int_{(K\times K')\backslash (B(I, \delta_1) \times B'(I, \delta_1))} dt dt' \, p_{\nabla X_{|K}(t), \nabla X_{|K'} (t')}(0,0)\\
	&\quad \times \E \big\{ |{\rm det} \nabla^2 X_{|K}(t) | |{\rm det} \nabla^2 X_{|K'}(t') | \mathbbm{1}_{\{X(t)\geq u, X(t')\geq u\}}\big| \nabla X_{|K}(t)=0, \nabla X_{|K'}(t')=0 \big\} \\
	&+ \int_{B(I, \delta_1) \times B'(I, \delta_1)} dt dt'\, p_{\nabla X_{|K}(t), \nabla X_{|K'} (t')}(0,0)\\
	&\quad \times \E \big\{ |{\rm det} \nabla^2 X_{|K}(t) | |{\rm det} \nabla^2 X_{|K'}(t') | \mathbbm{1}_{\{X(t)\geq u, X(t')\geq u\}}\big|  \nabla X_{|K}(t)=0, \nabla X_{|K'}(t')=0 \big\} \\
	&:=I_1(u) + I_2(u).
	\end{split}
	\end{equation}
	Note that
	\begin{equation*}
	\begin{split}
	(K\times K') \backslash (B(I, \delta_1) \times B'(I, \delta_1)) &= \Big( (K\backslash B(I, \delta_1)) \times B'(I, \delta_1) \Big) \bigcup \Big( B(I, \delta_1)\times (K\backslash B(I, \delta_1)) \Big) \\
	&  \quad \bigcup  \Big( (K\backslash B(I, \delta_1))\times (K\backslash B(I, \delta_1)) \Big),
	\end{split}
	\end{equation*}
	where each product on the right hand side consists of two sets with a positive distance. It then follows from Proposition \ref{Lem:cross terms disjoint sets} that $I_1(u)$ is super-exponentially small. On the other hand, since $\mathbbm{1}_{\{X(t)\geq u, X(t')\geq u\}}\le \mathbbm{1}_{\{[X(t)+ X(t')]/2\geq u\}}$, one has
	\begin{equation}\label{Eq:I2}
		\begin{split}
			I_2(u)&\le\int_{B(I, \delta_1) \times B'(I, \delta_1)} dt dt' \int_u^\infty dx\,p_{\frac{X(t)+X(t')}{2}}(x|\nabla X_{|K}(t)=0, \nabla X_{|K'}(t')=0) \\
			&\quad \times \E \big\{ |{\rm det} \nabla^2 X_{|K}(t) | |{\rm det} \nabla^2 X_{|K'}(t') | \big| [X(t)+ X(t')]/2=x,\\
			&\qquad \nabla X_{|K}(t)=0, \nabla X_{|K'}(t')=0 \big\}p_{\nabla X_{|K}(t), \nabla X_{|K'} (t')}(0,0).
		\end{split}
	\end{equation}
	 Combining this with \eqref{Eq:super-exponentially small}, we conclude that $I_2(u)$ and hence $\E\{M_u^E (X,K) M_u^E (X,K')\}$ are super-exponentially small.

	\textbf{Case (ii): $\bm{\mathcal{M}_0 \neq \emptyset}$.} Let
	\begin{equation*}
	\begin{split}
	B(\mathcal{M}_0,\delta_2):=\{(t,t')\in K\times K' : d(t,\mathcal{M}_0)\vee d(t',\mathcal{M}_0)\le \delta_2  \},
	\end{split}
	\end{equation*}
	where $\delta_2$ is a small positive number to be specified. Note that, by the definitions of $\mathcal{M}_0$ and 
	$B(\mathcal{M}_0,\delta_2)$, there exists $\ep_2>0$ such that
	\begin{equation}\label{Eq:delta2}
	\begin{split}
	\sup_{(t,t')\in (K\times K') \backslash B(\mathcal{M}_0,\delta_2)} &{\rm Var} ( [X(t)+X(t')]/2 |\nabla X_{|K}(t), \nabla X_{|K'} (t'))\leq \sigma_T^2 - \ep_2.
	\end{split}
	\end{equation}
	Similarly to \eqref{Eq:I2}, we obtain that $\int_{(K\times K') \backslash B(\mathcal{M}_0,\delta_2)}A(t,t',u)dtdt'$ is 
	super-exponentially small. It suffices to show below that $\int _{B(\mathcal{M}_0,\delta_2)} A(t,t',u)\,dtdt'$ is super-exponentially small.
	
	Due to $({\bf H}3)$ and Proposition \ref{prop:H3}, we can choose $\delta_2$ small enough such that for all $(t,t')\in B(\mathcal{M}_0,\delta_2)$,
	\begin{equation*}
	\begin{split}
	\La_{K\cup K'}(t)&:=-\E\{X(t)\nabla^2 X_{|K\cup K'}(t)\}=-(\E\{X(t) X_{ij}(t)\})_{i,j=1,\ldots, k+k'-m}
	\end{split}
	\end{equation*}
	are positive definite. Let $\{e_1, e_2, \ldots, e_N\}$ be the standard orthonormal basis of $\R^N$. For $t\in K$ and $t'\in K'$, let $e_{t, t'}=(t'-t)/\|t'-t\|$ and $\alpha_i(t, t')= \l e_i, \La_{K\cup K'}(t)e_{t,t'}\r$. Then
	\begin{equation}\label{Eq:orthnormal basis decomp je}
	\La_{K\cup K'}(t)e_{t,t'}=\sum_{i=1}^N \l e_i, \La_{K\cup K'}(t)e_{t,t'}\r e_i = \sum_{i=1}^N  \alpha_i(t, t') e_i
	\end{equation}
	and there exists $\alpha_0 >0$ such that for all $(t,t')\in B(\mathcal{M}_0,\delta_2)$,
	\begin{equation}\label{Eq:posdef je}
	\l e_{t,t'}, \La_{K\cup K'}(t)e_{t,t'} \r \geq \alpha_0.
	\end{equation}
	Since all elements in $\ep(K)$ and $\ep(K')$ are 1, we may write
	\begin{equation*}
	\begin{split}
	t &= (t_1, \ldots, t_m, t_{m+1}, \ldots, t_k, b_{k+1}, \ldots, b_{k+k'-m}, 0, \ldots, 0),\\
	t' &= (t'_1, \ldots, t'_m, b_{m+1}, \ldots, b_k, t'_{k+1}, \ldots, t'_{k+k'-m}, 0, \ldots, 0),
	\end{split}
	\end{equation*}
	where $t_i \in (a_i, b_i)$ for $i\in \tau(K)$ and $t'_j \in (a_j, b_j)$ for $j \in \tau(K')$. Therefore,
	\begin{equation}\label{Eq:e_k je}
	\begin{split}
	\l e_i , e_{t,t'}\r &\geq 0, \quad \forall \ m+1 \leq i \leq k,\\
	\l e_i , e_{t,t'}\r &\leq 0, \quad \forall \ k+1\leq i \leq k+k'-m,\\
	\l e_i , e_{t,t'}\r &= 0, \quad \forall \ k+k'-m< i \leq N.
	\end{split}
	\end{equation}
	Let
	\begin{equation}\label{Eq:D_k je}
	\begin{split}
	D_i &= \{ (t,t')\in B(\mathcal{M}_0,\delta_2): \alpha_i (t,t')\geq \beta_i \}, \quad \text{if}\ m+1 \leq i \leq k,\\
	D_i &= \{ (t,t')\in B(\mathcal{M}_0,\delta_2): \alpha_i (t,t')\leq -\beta_i \}, \quad \text{if}\ k+1\leq i \leq k+k'-m,\\
	D_0 &= \bigg\{ (t,t')\in B(\mathcal{M}_0,\delta_2): \sum_{i=1}^m \alpha_i (t,t')\l e_i , e_{t,t'}\r\geq \beta_0\bigg \},
	\end{split}
	\end{equation}
	where $\beta_0, \beta_1,\ldots, \beta_{k+k'-m}$ are positive constants such that $\beta_0 + \sum_{i=m+1}^{k+k'-m} \beta_i < \alpha_0$.
	It follows from (\ref{Eq:e_k je}) and (\ref{Eq:D_k je}) that, if $(t,s)$ does not belong to any of $D_0, D_{m+1}, \ldots, D_{k+k'-m}$, then by (\ref{Eq:orthnormal basis decomp je}),
	$$\l\La_{K\cup K'}(t)e_{t,t'}, e_{t,t'}\r = \sum_{i=1}^N \alpha_i(t,t') \l e_i , e_{t,t'}\r\leq \beta_0 + \sum_{i=m+1}^{k+k'-m} \beta_i < \alpha_0,$$
	which contradicts (\ref{Eq:posdef je}). Thus $D_0\cup \cup_{i=m+1}^{k+k'-m} D_i$ is a covering of $B(\mathcal{M}_0,\delta_2)$. By (\ref{Eq:cross term je}),
	\begin{equation*}
	\begin{split}
	\E&\{M_u^E (K) M_u^E (K')\} \leq \int_{D_0} A(t,t',u)\,dtdt' + \sum_{i=m+1}^{k+k'-m} \int_{D_i}A(t,t',u)\,dtdt'.
	\end{split}
	\end{equation*}
	By the Kac-Rice metatheorem and the fact $\mathbbm{1}_{\{X(t)\geq u, Y(s)\geq u\}}\le \mathbbm{1}_{\{X(t)\geq u\}}$, we obtain
	\begin{equation}\label{Eq:D0}
	\begin{split}
	&\quad \int_{D_0}A(t,t',u)\,dtdt'\\
	&\leq \int_{D_0} dtdt' \int_u^\infty dx \, p_{\nabla X_{|K}(t), \nabla X_{|K'} (t')}(0,0) p_{X(t)}(x|\nabla X_{|K}(t)=0, \nabla X_{|K'} (t')=0)\\
	&\quad \times \E \big\{ |\text{det} \nabla^2 X_{|K}(t) ||\text{det} \nabla^2 X_{|K'}(t') | \big| X(t)=x, \nabla X_{|K}(t)=0, \nabla X_{|K'}(t')=0 \big\},
	\end{split}
	\end{equation}
	and that for $i= m+1, \ldots, k$,
	\begin{equation}\label{Eq:Di}
	\begin{split}
	&\quad \int_{D_i}A(t,t',u)\,dtdt'\\
	&\le \int_{D_i} dtdt' \int_u^\infty dx \int_0^\infty dw_i \, p_{X(t), \nabla X_{|K}(t), X_i(t'), \nabla X_{|K'} (t')}(x,0,w_i,0)\\
	&\quad \times \E\big \{ |\text{det} \nabla^2 X_{|K}(t)| |\text{det} \nabla^2 X_{|K'}(t')|  \big| X(t)=x, \nabla X_{|K}(t)=0, X_i(t')=w_i, \nabla X_{|K'} (t')=0 \big\}.
	\end{split}
	\end{equation}
	Comparing \eqref{Eq:D0} and \eqref{Eq:Di} with Eqs. (4.33) and (4.36) respectively in the proof of Theorem 4.8 in \citet{ChengXiao2014}, one can employ the same reasoning therein to show that ${\rm Var}(X(t)|\nabla X_{|K}(t), \nabla X_{|K'} (t'))<\sigma_T^2$ uniformly on $D_0$ and $\P(X(t)>u, X_i(t')>0|\nabla X_{|K}(t)=0, \nabla X_{|K'} (t')=0)=o(e^{-u^2/(2\sigma_T^2)-\alpha u^2})$ uniformly on $D_i$, and deduce that $\int_{D_0}A(t,t',u)\,dtdt'$ and 
	$\int_{D_i} A(t,t',u)\,dtdt'$ ($i= m+1, \ldots, k$) are super-exponentially small.
	
	It is similar to show that $\int_{D_i} A(t,t',u)\,dtdt'$ are super-exponentially small for $i=k+1, \ldots, k+k'-m$. For the 
	case $k=0$ or $l=0$, the argument is even simpler when applying the Kac-Rice formula; the details are omitted here. The proof is finished. 
\end{proof}

In the proof of Proposition \ref{Lem:cross terms je}, we have shown in \eqref{Eq:Mu3} that, 
if $\mathcal{M}_0 = \emptyset$, then the moment $\E\{M_u (X,K) M_u (X,K')\}$ is super-exponentially small. It is important to note that, the boundary condition \eqref{Eq:boundary} implies (and generalizes) the condition $\mathcal{M}_0 = \emptyset$, yielding the following result.
\begin{proposition}\label{Lem:cross terms je2} Let $\{X(t),\, t\in T\}$ be a centered Gaussian random field satisfying $({\bf H}1)$, $({\bf H}2)$ and the boundary condition \eqref{Eq:boundary}. 
	Then there exists $\alpha>0$ such that as $u\to \infty$,
	\begin{equation*}
	\begin{split}
	\E\{M_u (K) M_u (K')\} &= o\Big(\exp \Big\{ -\frac{u^2}{2\sigma_T^2} -\alpha u^2 \Big\}\Big),
	\end{split}
	\end{equation*}
	where $K$ and $K'$ are adjacent faces of $T$.
\end{proposition}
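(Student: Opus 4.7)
The plan is to reduce to Case (i) of the proof of Proposition \ref{Lem:cross terms je} by showing that the boundary condition \eqref{Eq:boundary} forces the set $\mathcal{M}_0$ defined in \eqref{Eq:M0} to be empty. Let $I = \bar K \cap \bar{K'}$ denote the (closed) adjacency face, which is nonempty since $d(K,K') = 0$, and recall
\[
\mathcal{M}_0 = \{t \in I : \nu(t) = \sigma_T^2,\ \nu_i(t) = 0 \text{ for all } i \in \tau(K) \cup \tau(K')\}.
\]

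To verify $\mathcal{M}_0 = \emptyset$, suppose for contradiction that $t \in \mathcal{M}_0$ and let $J_t$ be the unique face of $T$ whose relative interior contains $t$. Since $t \in \bar K \cap \bar{K'}$, we have $\tau(J_t) \subseteq \tau(K) \cap \tau(K') =: \tau(I)$. Because $K$ and $K'$ are distinct with $d(K,K')=0$, one must have $\tau(K) \neq \tau(K')$ (if $\tau(K) = \tau(K')$, two distinct such faces would differ in some $\epsilon_j$ for $j$ outside the common $\tau$, forcing disjoint closures), and therefore $\tau(K) \cup \tau(K')$ strictly contains $\tau(I)$. Pick any $i \in (\tau(K) \cup \tau(K')) \setminus \tau(J_t)$; the definition of $\mathcal{M}_0$ gives $\nu_i(t) = 0$, whereas \eqref{Eq:boundary} applied with $J = J_t$ forces $\nu_i(t) \neq 0$, a contradiction.

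With $\mathcal{M}_0 = \emptyset$ in hand, the Case (i) argument of Proposition \ref{Lem:cross terms je} carries over essentially verbatim, and in fact with minor simplifications since the sign indicators attached to $M_u^E$ are absent from $M_u$. Namely, by uniform continuity of the conditional variance on the compact set $I$, one obtains $\delta_1, \epsilon_1 > 0$ such that
\[
\sup_{t \in B(I,\delta_1),\, t' \in B'(I,\delta_1)} \mathrm{Var}(X(t) \mid \nabla X_{|K}(t), \nabla X_{|K'}(t')) \leq \sigma_T^2 - \epsilon_1.
\]
Splitting $K \times K'$ into $B(I,\delta_1) \times B'(I,\delta_1)$ and its complement, Proposition \ref{Lem:cross terms disjoint sets} handles the complement (its components are products of sets at positive distance), while the Kac-Rice bound on the neighborhood, combined with $\mathbbm{1}_{\{X(t)\geq u, X(t')\geq u\}} \leq \mathbbm{1}_{\{[X(t)+X(t')]/2 \geq u\}}$ and the above conditional variance estimate, yields the desired super-exponential decay. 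The main obstacle is the $\mathcal{M}_0 = \emptyset$ step, which requires careful tracking of the face $J_t$ containing each $t$ (which may be a lower-dimensional subface of $I$ with $\tau(J_t) \subsetneq \tau(I)$); once that is settled, no new ideas beyond the earlier Case (i) analysis are needed.
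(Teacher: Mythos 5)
Your proposal is correct and follows essentially the same route as the paper, which derives this proposition by observing that the boundary condition \eqref{Eq:boundary} forces $\mathcal{M}_0=\emptyset$ and then invoking Case (i) of the proof of Proposition \ref{Lem:cross terms je} (the splitting \eqref{Eq:Mu3} with Proposition \ref{Lem:cross terms disjoint sets} and the conditional-variance bound). Your explicit verification that $\mathcal{M}_0=\emptyset$ via the face $J_t$ containing $t$ and the fact that $\tau(J_t)\subseteq\tau(K)\cap\tau(K')\subsetneq\tau(K)\cup\tau(K')$ is a sound filling-in of a step the paper only asserts.
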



\section{Estimation of the difference between EEC and the upper bound}\label{sec:diff}
In this section, we demonstrate that the difference between $\E\{\chi(A_u)\}$ and the expected number of extended outward local maxima, i.e. the upper bound in \eqref{Ineq:upperbound je}, is super-exponentially small.
\begin{proposition}\label{Prop:simlify the high moment je} Let $\{X(t),\, t\in T\}$ be a centered Gaussian random field satisfying $({\bf H}1)$, $({\bf H}2)$ and $({\bf H}3)$. Then there exists $\alpha>0$ such that for any $K\in \partial_k T$ with $k\ge 0$, as $u\to \infty$,
	\begin{equation}\label{Eq:simplify-major-term}
	\begin{split}
	\E \{M_u^E (K)\} 
	&=(-1)^{k}\int_K \E\big\{{\rm det}\nabla^2 X_{|K}(t)\mathbbm{1}_{\{X(t)\geq u, \ \varepsilon^*_\ell X_\ell(t) \geq 0 \ {\rm for \ all}\ \ell\notin \tau(K)\}} \big| \nabla X_{|K}(t)=0\big\}\\
	&\quad \times p_{\nabla X_{|K}(t)}(0) dt +o\left( \exp \left\{ -\frac{u^2}{2\sigma_T^2} -\alpha u^2 \right\}\right)\\
	&= (-1)^{k} \E\bigg\{\bigg(\sum^k_{i=0} (-1)^i \mu_i(K)\bigg)\bigg\} + o\left( \exp \left\{ -\frac{u^2}{2\sigma_T^2} -\alpha u^2 \right\}\right).
	\end{split}
	\end{equation}
\end{proposition}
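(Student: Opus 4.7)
The case $k=0$ is immediate since the gradient and Hessian are empty objects: $M_u^E(K)=\mu_0(K)=\mathbbm{1}_{\{X(t_0)\ge u,\ \varepsilon_\ell^* X_\ell(t_0)\ge 0\ \forall \ell\notin \tau(K)\}}$ at the single point $t_0\in K$, and both equalities reduce to identities under the convention stated after Theorem \ref{Thm:MEC approximation je}. Assume $k\ge 1$ below.

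The key observation is that $M_u^E(K)=\mu_k(K)$ by definition. Under $({\bf H}1)$ and $({\bf H}2)$, the Kac-Rice formula applied to each $\mu_i(K)$, $i=0,\ldots,k$, gives
\begin{equation*}
\E\{\mu_i(K)\}=\int_K \E\bigl\{|\det \nabla^2 X_{|K}(t)|\,\mathbbm{1}_{\{\text{index}(\nabla^2 X_{|K}(t))=i\}}\mathbbm{1}_{\{X(t)\ge u,\ \varepsilon_\ell^* X_\ell(t)\ge 0\ \forall \ell\notin \tau(K)\}}\big|\,\nabla X_{|K}(t)=0\bigr\}p_{\nabla X_{|K}(t)}(0)\,dt.
\end{equation*}
Since $\det B=(-1)^{\text{index}(B)}|\det B|$ for every symmetric non-degenerate $B$, and $\nabla^2 X_{|K}(t)$ is a.s.\ non-degenerate by $({\bf H}2)$, summing the above over $i$ with weights $(-1)^i$ yields
\begin{equation*}
\sum_{i=0}^k (-1)^i \E\{\mu_i(K)\}=\int_K \E\bigl\{\det \nabla^2 X_{|K}(t)\,\mathbbm{1}_{\{X(t)\ge u,\ \varepsilon_\ell^* X_\ell(t)\ge 0\ \forall \ell\notin \tau(K)\}}\big|\,\nabla X_{|K}(t)=0\bigr\}p_{\nabla X_{|K}(t)}(0)\,dt,
\end{equation*}
and multiplying by $(-1)^k$ is the second equality of the statement. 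Using $\E\{M_u^E(K)\}=\E\{\mu_k(K)\}$, the first equality reduces to showing
\begin{equation*}
\E\{\mu_i(K)\}=o\bigl(\exp\{-u^2/(2\sigma_T^2)-\alpha u^2\}\bigr),\quad u\to\infty,\qquad i=0,1,\ldots,k-1,
\end{equation*}
for some $\alpha>0$.

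For this super-exponential smallness, fix $i\in\{0,\ldots,k-1\}$ and let $\mathcal{T}_*:=\{t\in \bar K:\nu(t)=\sigma_T^2\}$, a compact (possibly empty) subset of $\bar K$. Split $K$ into $K\cap B(\mathcal{T}_*,\delta)$ and its complement, for $\delta>0$ small. On the complement, continuity of $\nu$ gives $\sup\nu\le \sigma_T^2-\epsilon$ for some $\epsilon>0$; bounding the Kac-Rice integrand by $C(1+x^{N'})e^{-x^2/(2(\sigma_T^2-\epsilon))}$ via Jensen-type moment estimates analogous to \eqref{Eq:disjoint faces 4} shows this contribution is super-exponentially small. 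On $B(\mathcal{T}_*,\delta)$, Proposition \ref{prop:H3} combined with $\tau(K)\subset \mathcal{I}(t)$ for $t\in \mathcal{T}_*$ yields that $\Sigma_K(t)$, being a principal submatrix of the negative definite matrix in \eqref{eq:prop:H3}, satisfies $\Sigma_K(t)\prec 0$ uniformly on $\mathcal{T}_*$; after shrinking $\delta$ we may assume $\Sigma_K(t)\preceq -c_0 I$ on the whole neighbourhood. A direct block-inversion regression calculation then shows that the conditional mean of $\nabla^2 X_{|K}(t)$ given $X(t)=x,\ \nabla X_{|K}(t)=0$ equals $\frac{x}{\sigma_t^2}\Sigma_K(t)$ plus a correction of order $x\cdot \max_{\ell\in\tau(K)}|\nu_\ell(t)|$ arising from the off-diagonal correlations $\E\{X(t)X_\ell(t)\}=\nu_\ell(t)/2$; since $\nu_\ell$ vanishes on $\mathcal{T}_*$, this correction is $O(x\delta)$ and the conditional mean is $\preceq -c_1 x I$ uniformly for small $\delta$, while the conditional covariance of $\nabla^2 X_{|K}(t)$ remains bounded by $({\bf H}1),({\bf H}2)$. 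A standard Gaussian concentration bound then gives $\P(\text{index}(\nabla^2 X_{|K}(t))<k\mid X(t)=x,\nabla X_{|K}(t)=0)\le Ce^{-c_2 x^2}$ uniformly on $B(\mathcal{T}_*,\delta)$. Combining this with a polynomial moment bound on $|\det \nabla^2 X_{|K}(t)|$ (under the same conditioning), and the Gaussian density bound $p_{X(t)}(x\mid \nabla X_{|K}(t)=0)\le C e^{-x^2/(2\sigma_T^2)}$, integration over $x\ge u$ yields the required $o(e^{-u^2/(2\sigma_T^2)-\alpha u^2})$ estimate.

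The main technical obstacle is the uniform control of the conditional law of $\nabla^2 X_{|K}(t)$ on $B(\mathcal{T}_*,\delta)$: the joint covariance of $(X,\nabla X_{|K},\nabla^2 X_{|K})$ does not decouple off $\mathcal{T}_*$, so the regression analysis must be carried out perturbatively to verify that the leading conditional mean $\frac{x}{\sigma_t^2}\Sigma_K(t)$ dominates the correction coming from the nonzero correlations with $\nabla X_{|K}(t)$ and retains its strict negative-definiteness on the entire neighbourhood. This uniformity rests on compactness of $\mathcal{T}_*$, the non-degeneracy hypothesis $({\bf H}2)$, and the continuity of covariances from $({\bf H}1)$.
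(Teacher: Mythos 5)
Your reduction is sound up to the last step: the $k=0$ case, the exact identity $(-1)^k\sum_{i=0}^k(-1)^i\E\{\mu_i(K)\}=\int_K\E\{\det\nabla^2X_{|K}(t)\,\mathbbm{1}_{\{\cdots\}}\mid\nabla X_{|K}(t)=0\}p_{\nabla X_{|K}(t)}(0)\,dt$ via the sign trick, and the observation that everything then hinges on showing $\E\{\mu_i(K)\}$ is super-exponentially small for $i<k$ (equivalently, that the indicator $\mathbbm{1}_{\{\nabla^2X_{|K}(t)\prec0\}}$ can be dropped at super-exponentially small cost) are exactly the paper's route. The gap is in your localization. You set $\mathcal{T}_*=\{t\in\bar K:\nu(t)=\sigma_T^2\}$ and claim $\tau(K)\subset\mathcal{I}(t)$, hence $\Sigma_K(t)\prec0$ and $\nu_\ell(t)=0$ for $\ell\in\tau(K)$, uniformly on $\mathcal{T}_*$. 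That inclusion is only guaranteed for $t\in K$ (the relatively open face), not for $t\in\partial K\cap\mathcal{T}_*$: if the maximum variance is attained at a lower-dimensional boundary point of $\bar K$ where some $\nu_\ell(t)\neq0$ with $\ell\in\tau(K)$ (e.g., $K=\overset{\circ}{T}$ in $\R^2$ and $\sigma_T^2$ attained only at the corner $t^*=(b_1,b_2)$ with $\nu_1(t^*)\nu_2(t^*)\neq0$, which is Case 1 of Section \ref{sec:example} and is covered by Theorem \ref{thm:unique-max-boundary}), then $\mathcal{I}(t^*)\not\supset\tau(K)$, Proposition \ref{prop:H3} gives no control of $\Sigma_K(t^*)$, and your regression ``correction of order $x\cdot\max_{\ell\in\tau(K)}|\nu_\ell(t)|$'' is of order $x$ times a constant, not $O(x\delta)$. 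Consequently the claimed bound $\E\{\nabla^2X_{|K}(t)\mid X(t)=x,\nabla X_{|K}(t)=0\}\preceq-c_1xI$ and the ensuing concentration estimate fail on part of $B(\mathcal{T}_*,\delta)$, and your complement estimate (which uses only $\nu\le\sigma_T^2-\epsilon$) does not cover that region either.

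The repair is to localize around the smaller set $\mathcal{M}_2=\{t\in\bar K:\nu(t)=\sigma_T^2,\ \nabla\nu_{|K}(t)=2\E\{X(t)\nabla X_{|K}(t)\}=0\}$ as in \eqref{Eq:M2}. On $K\setminus B(\mathcal{M}_2,\delta)$ one must use the \emph{conditional} variance: by compactness, $\sup_{t\in K\setminus B(\mathcal{M}_2,\delta)}{\rm Var}(X(t)\mid\nabla X_{|K}(t)=0)<\sigma_T^2$, because at points with $\nu(t)=\sigma_T^2$ but $\nabla\nu_{|K}(t)\neq0$ the correlation between $X(t)$ and some $X_\ell(t)$, $\ell\in\tau(K)$, strictly reduces the variance upon conditioning on $\nabla X_{|K}(t)=0$; this makes the entire Kac--Rice contribution there (for every index $i$) super-exponentially small, with no need to control the Hessian. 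Only on $B(\mathcal{M}_2,\delta)$ does one invoke Proposition \ref{prop:H3} to get $\Sigma_K(t)\prec0$ and the vanishing of $\E\{X(t)\nabla X_{|K}(t)\}$, after which your perturbative regression and concentration argument goes through as written. With this change of decomposition your proof coincides with the paper's.
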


\begin{proof} The second equality in \eqref{Eq:simplify-major-term} arises from the application of the Kac-Rice formula:
	\begin{equation*}
		\begin{split}
			&\quad \E\bigg\{\bigg(\sum^k_{i=0} (-1)^i \mu_i(K)\bigg)\bigg\} \\
			&=\sum^k_{i=0} (-1)^i\int_K  \E\big\{|{\rm det}\nabla^2 X_{|K}(t)|\mathbbm{1}_{\{\text{index} (\nabla^2 X_{|K}(t))=i\}} \\
			&\quad \times  \mathbbm{1}_{\{X(t)\geq u, \ \varepsilon^*_\ell X_\ell(t) \geq 0 \ {\rm for \ all}\ \ell\notin \tau(K)\}} \big| \nabla X_{|K}(t)=0\big\}p_{\nabla X_{|K}(t)}(0)\, dt\\
			 &=\int_K  \E\big\{{\rm det}\nabla^2 X_{|K}(t)\mathbbm{1}_{\{X(t)\geq u, \ \varepsilon^*_\ell X_\ell(t) \geq 0 \ {\rm for \ all}\ \ell\notin \tau(K)\}} \big| \nabla X_{|K}(t)=0\big\} p_{\nabla X_{|K}(t)}(0) \, dt.
		\end{split}
	\end{equation*}
To prove the first approximation in \eqref{Eq:simplify-major-term} and convey the main idea, we start with the case when the face $K$ represents the interior of $T$.
	
	\textbf{Case (i):  $\bm{k=N}$.} By the Kac-Rice formula, we have
	\begin{equation*}
	\begin{split}
	&\quad \E \{M_u^E (K)\}\\
	&=\int_K p_{\nabla X(t)}(0) dt \int_u^\infty dx\, p_{X(t)}(x|\nabla X(t)=0)\E\big\{{\rm det}\nabla^2 X(t)\mathbbm{1}_{\{\nabla^2 X(t)\prec 0\}} \big| X(t)=x, \nabla X(t)=0\big\}\\
	&:=\int_K p_{\nabla X(t)}(0) dt \int_u^\infty A(t,x)dx.
	\end{split}
	\end{equation*}
	Let 
	\begin{equation}\label{Eq:O-Udelta}
	\begin{split}
	\mathcal{M}_1&=\{t\in \bar{K}=T: \nu(t)=\sigma_T^2, \ \nabla \nu(t)=2\E\{X(t)\nabla X(t)\}=0 \},\\
	B(\mathcal{M}_1, \delta_1)&=\left\{t\in K: d\left(t, \mathcal{M}_1\right)\le\delta_1\right \},
	\end{split}
	\end{equation}
	where $\delta_1$ is a small positive number to be specified. Then, we only need to estimate
	\begin{equation}\label{Eq:A1}
		\begin{split}
	\int_{B(\mathcal{M}_1, \delta_1)} p_{\nabla X(t)}(0) dt \int_u^\infty A(t,x)dx,
\end{split}
\end{equation}
	since the integral above with $B(\mathcal{M}_1, \delta_1)$ replaced by $K\backslash B(\mathcal{M}_1, \delta_1)$ 
	becomes super-exponentially small due to the fact 
	\[
	\sup_{t\in K\backslash B(\mathcal{M}_1, \delta_1)} {\rm Var}(X(t) | \nabla X(t)=0) < \sigma_T^2.
	\]
	Notice that, by Proposition \ref{prop:H3}, $\E\{X(t)\nabla^2 X(t)\} \prec 0$ for all $t\in \mathcal{M}_1$. Thus there exists $\delta_1$ small enough such that $\E\{X(t)\nabla^2 X(t)\}\prec 0$ for all $t\in B(\mathcal{M}_1, \delta_1)$. In particular, let $\la_0$ be the largest 
        eigenvalue of $\E\{X(t) \nabla^2 X(t)\}$ over $B(\mathcal{M}_1, \delta_1)$, then $\la_0<0$ by the uniform continuity. 
	Also note that $\E\{X(t)\nabla X(t)\}$ tends to 0 as $\delta_1\to 0$. Therefore, as $\delta_1\to 0$,
	\begin{equation*}
	\begin{split}
	&\quad \E\{X_{ij}(t)|X(t)=x, \nabla X(t)=0\}\\
	&=(\E\{X_{ij}(t)X(t)\}, \E\{X_{ij}(t)X_1(t)\}, \ldots, \E\{X_{ij}(t)X_N(t)\})  \left[ {\rm Cov}(X(t), \nabla X(t)) \right]^{-1} (x, 0, \ldots, 0)^T\\
	&=\frac{\E\{X_{ij}(t)X(t)\} x}{\sigma_T^2}(1+o(1)).
	\end{split}
	\end{equation*}
	 Thus, for all $x\ge u$ and $t\in B(\mathcal{M}_1, \delta_1)$ with $\delta_1$ small enough,
	\begin{equation*}
	\begin{split}
	\Sigma_1(t,x) &:=\E\{\nabla^2 X(t)|X(t)=x, \nabla X(t)=0\}\prec 0.
	\end{split}
	\end{equation*}
	Let $\Delta_1(t,x)=\nabla^2 X(t) - \Sigma_1(t,x)$.
	We have
	\begin{equation}\label{Eq:int_A}
	\begin{split}
	\int_u^\infty A(t,x)dx
	&=\int_u^\infty  p_{X(t)}(x|\nabla X(t)=0)\E\big\{{\rm det}(\Delta_1(t,x)+ \Sigma_1(t,x))\\
	&\quad \times \mathbbm{1}_{\{\Delta_1(t,x)+ \Sigma_1(t,x)\prec 0\}} \big| X(t)=x, \nabla X(t)=0\big\}\, dx\\
	&:=\int_u^\infty  p_{X(t)}(x|\nabla X(t)=0)E(t,x)\, dx.
	\end{split}
	\end{equation}
Note that the following is a centered Gaussian random matrix not depending on $x$:
\begin{equation*}
	\begin{split}
		\Omega(t)&=(\Omega_{ij}(t))_{1\le i,j\le N}=(\Delta_1(t,x) | X(t)=x,  \nabla X(t)=0).
	\end{split}
\end{equation*}
Let $h_t(v)$ denote the density of the Gaussian vector $((\Omega_{ij}^X(t))_{1\le i\le j\le N}$ with $v=(v_{ij})_{1\le i\le j\le N}\in \R^{N(N+1)/2}$. Then
\begin{equation}\label{Eq:E(t,s,xy)}
	\begin{split}
E(t,x) &= \E\big\{{\rm det}(\Omega(t)+ \Sigma_1(t,x)) \mathbbm{1}_{\{\Omega(t)+ \Sigma_1(t,x)\prec 0\}} \big\}\\
&= \int_{v:\, (v_{ij})+\Sigma_1(t,x)\prec 0} {\rm det}((v_{ij})+ \Sigma_1(t,x)) h_t(v)dv,
\end{split}
\end{equation}
where $(v_{ij})$ is the abbreviations of the matrix $v=(v_{ij})_{1\le i, j\le N}$. 
There exists a constant $c>0$ such that for $\delta_1$ small enough and all 
$t\in B(\mathcal{M}_1, \delta_1)$, and $x\ge u$, we have
\[
(v_{ij})+ \Sigma_1(t,x) \prec 0, \quad \forall \|(v_{ij})\|:=\Big(\sum_{i,j=1}^N v_{ij}^2\Big)^{1/2}<cu.
\]
This implies $\{v:\, (v_{ij})+\Sigma_1(t,x)\not\prec 0\} \subset \{v:\, \|(v_{ij})\|\ge cu\}$. Consequently, the integral in 
\eqref{Eq:E(t,s,xy)} with the domain of integration replaced by $\{v:\, (v_{ij})+\Sigma_1(t,x)\not\prec 0\}$ is $o(e^{-\alpha'u^2})$ uniformly for all $t\in B(\mathcal{M}_1, \delta_1)$, where $\alpha'$ is 
a positive constant. As a result, we conclude that, uniformly for all $t\in B(\mathcal{M}_1, \delta_1)$ and $x\ge u$,
\begin{equation*}
	\begin{split}
		E(t,x) = \int_{\R^{N(N+1)/2}} {\rm det}((v_{ij})+ \Sigma_1(t,x))h_t(v)dv + o(e^{-\alpha'u^2}).
	\end{split}
\end{equation*}
By substituting this result into \eqref{Eq:int_A}, we observe that the indicator function $\mathbbm{1}_{\{\nabla^2 X(t)\prec 0\}}$ in \eqref{Eq:A1} can be eliminated, causing only a super-exponentially small error. 
Thus, for sufficiently large $u$, there exists $\alpha>0$ such that 
	\begin{equation*}
	\begin{split}
	\E \{M_u^E (K)\} 
	&=\int_K p_{\nabla X(t)}(0) dt \int_u^\infty  \E\{{\rm det}\nabla^2 X(t)|X(t)=x, \nabla X(t)=0\} \\
	&\quad \times p_{X(t)}(x|\nabla X(t)=0)dx +o\Big( \exp \Big\{ -\frac{u^2}{2\sigma_T^2} -\alpha u^2 \Big\}\Big).
	\end{split}
	\end{equation*}

\textbf{Case (ii):  $\bm{k,l\ge 0}$.} It is worth noting that when $k=0$, the terms in \eqref{Eq:simplify-major-term} related to the Hessian will vanish, simplifying the proof.  Therefore, without loss of generality, let $k\ge 1$, $\tau(K)= \{1, \cdots, k\}$ and assume all the elements in $\ep(K)$ are 1. By the Kac-Rice formula,
	\begin{equation*}
	\begin{split}
	\E \{M_u^E (K)\}
	&=(-1)^k\int_K p_{\nabla X_{|K}(t)}(0) dt \int_u^\infty  p_{X(t)}\big(x\big |\nabla X_{|K}(t)=0\big)\E\big\{{\rm det}\nabla^2 X_{|K}(t) \\
	&\quad \times  \mathbbm{1}_{\{\nabla^2 X_{|K}(t)\prec 0\}}\mathbbm{1}_{\{X_{k+1}(t)>0,\ldots, X_N(t)>0\}} \big| X(t)=x, \nabla X_{|K}(t)=0\big\}dx\\
	&:=(-1)^k\int_K p_{\nabla X_{|K}(t)}(0) dt \int_u^\infty  A'(t,x)dx.
	\end{split}
	\end{equation*}
Let
\begin{equation}\label{Eq:M2}
	\begin{split}
		\mathcal{M}_2&=\{t\in \bar{K}: \nu(t)=\sigma_T^2, \ \nabla \nu_{|K}(t)= 2\E\{X(t)\nabla X_{|K}(t)\}=0 \},\\
		B(\mathcal{M}_2, \delta_2)&=\left\{t\in K: d\left(t, \mathcal{M}_2\right)\le\delta_2\right \},
	\end{split}
\end{equation}
where $\delta_2$ is another small positive number to be specified. Here, we only need to estimate
\begin{equation}\label{Eq:A'}
	\begin{split}
\int_{B(\mathcal{M}_2, \delta_2)} p_{\nabla X_{|K}(t)}(0) dt \int_u^\infty  A'(t,x)dx,
	\end{split}
\end{equation}
since the integral above with $B(\mathcal{M}_2, \delta_2)$ replaced by $K\backslash B(\mathcal{M}_2, \delta_2)$ 
is super-exponentially small due to the fact 
\[
\sup_{t\in K\backslash B(\mathcal{M}_2, \delta_2)} {\rm Var}(X(t) | \nabla X(t)=0) < \sigma_T^2.
\]
On the other hand, following similar arguments in the proof for Case (i), we have that removing the indicator functions 
$\mathbbm{1}_{\{\nabla^2 X_{|K}(t)\prec 0\}}$ in \eqref{Eq:A'} will  
only cause a super-exponentially small error. Combining these results, we conclude that the first approximation 
in \eqref{Eq:simplify-major-term} holds, thus completing the proof.	
\end{proof}

From the proof of Proposition \ref{Prop:simlify the high moment je}, it is evident that the same line of reasoning and arguments can be readily extended to $\E\{M_u (X,K)\}$, leading to the following result.
\begin{proposition}\label{Prop:simlify the high moment je2} Let $\{X(t),\, t\in T\}$ be a centered Gaussian random field satisfying $({\bf H}1)$, $({\bf H}2)$ and $({\bf H}3)$. Then there exists a constant $\alpha>0$ such that for any $K\in \partial_k T$, as $u\to \infty$,
	\begin{equation*}
	\begin{split}
	\E \{M_u (K)\}
	&=(-1)^k\int_K \E\big\{{\rm det}\nabla^2 X_{|K}(t)\mathbbm{1}_{\{X(t)\geq u\}}\big|\nabla X_{|K}(t)=0\big\}p_{\nabla X_{|K}(t)}(0) dt\\
	&\quad  +o\left( \exp \left\{ -\frac{u^2}{2\sigma_T^2} -\alpha u^2 \right\}\right).
	\end{split}
	\end{equation*}
\end{proposition}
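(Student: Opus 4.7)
The plan is to follow the proof of Proposition~\ref{Prop:simlify the high moment je} essentially verbatim, with the simplification that in the count $M_u(K)$ one no longer has the extended outward conditions $\varepsilon_j^* X_j(t) \geq 0$ for $j\notin\tau(K)$. Consequently, when I apply the Kac--Rice formula to $\E\{M_u(K)\}$ I obtain
\begin{equation*}
\E\{M_u(K)\} = (-1)^k \int_K p_{\nabla X_{|K}(t)}(0)\, dt \int_u^\infty p_{X(t)}(x\mid \nabla X_{|K}(t)=0)\, B(t,x)\, dx,
\end{equation*}
where
\begin{equation*}
B(t,x) = \E\bigl\{\det \nabla^2 X_{|K}(t)\, \mathbbm{1}_{\{\nabla^2 X_{|K}(t)\prec 0\}}\,\bigl|\, X(t)=x,\nabla X_{|K}(t)=0\bigr\}.
\end{equation*}
There are no auxiliary $X_j(t)$ with $j\notin\tau(K)$ to integrate against, so the object to analyze is even simpler than $A'(t,x)$ in the proof of Proposition~\ref{Prop:simlify the high moment je}.

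Next I would localize to a neighborhood $B(\mathcal{M}_2,\delta_2)$ of the set $\mathcal{M}_2=\{t\in\bar K:\nu(t)=\sigma_T^2,\nabla\nu_{|K}(t)=0\}$ defined in \eqref{Eq:M2}. Off this neighborhood, the supremum of ${\rm Var}(X(t)\mid\nabla X_{|K}(t)=0)$ is strictly less than $\sigma_T^2$, so the contribution to $\E\{M_u(K)\}$ from $K\setminus B(\mathcal{M}_2,\delta_2)$ is super-exponentially small by the standard Gaussian tail bound, exactly as in the previous proposition. It therefore suffices to estimate the integral over $B(\mathcal{M}_2,\delta_2)$.

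Inside $B(\mathcal{M}_2,\delta_2)$, $({\bf H}3)$ and Proposition~\ref{prop:H3} give that $\E\{X(t)\nabla^2X_{|K}(t)\}\prec 0$, and by uniform continuity of the conditional mean the largest eigenvalue of $\Sigma_1(t,x):=\E\{\nabla^2 X_{|K}(t)\mid X(t)=x,\nabla X_{|K}(t)=0\}$ is bounded above by a strictly negative multiple of $x$ (up to a lower-order term), uniformly for $t\in B(\mathcal{M}_2,\delta_2)$ and $x\geq u$. Writing $\nabla^2 X_{|K}(t)=\Sigma_1(t,x)+\Omega(t)$ with $\Omega(t)$ a centered Gaussian matrix independent of $x$, the event $\{\nabla^2 X_{|K}(t)\not\prec 0\}$ forces $\|\Omega(t)\|\geq cu$ for some $c>0$, and integrating $\det(\Sigma_1(t,x)+\Omega(t))$ against the Gaussian density of $\Omega(t)$ over this set produces a bound of order $\exp(-\alpha'u^2)$ for some $\alpha'>0$. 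This is exactly the argument from \eqref{Eq:int_A}--\eqref{Eq:E(t,s,xy)}, and lets me replace $B(t,x)$ by $\E\{\det\nabla^2 X_{|K}(t)\mid X(t)=x,\nabla X_{|K}(t)=0\}$ at the cost of a super-exponentially small error.

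Finally, reassembling the integral and un-conditioning on $X(t)\geq u$ yields the claimed expression. The main (in fact only) real work is the removal of the indicator $\mathbbm{1}_{\{\nabla^2 X_{|K}(t)\prec 0\}}$, and this step is identical to the corresponding step in Proposition~\ref{Prop:simlify the high moment je}; no new ideas are required, which is why the proof can be merely indicated as an adaptation. The absence of the outward-derivative indicators is genuinely a simplification, since there is no longer an $A'(t,x)$-type integration over positive half-lines in $w_j$ variables to track.
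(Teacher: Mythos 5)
Your proposal is correct and takes essentially the same route as the paper, which itself proves this proposition merely by noting that the argument for Proposition~\ref{Prop:simlify the high moment je} carries over to $\E\{M_u(K)\}$. Your steps---Kac--Rice without the outward-derivative indicators, discarding $K\setminus B(\mathcal{M}_2,\delta_2)$ via the strict conditional-variance gap, and removing $\mathbbm{1}_{\{\nabla^2 X_{|K}(t)\prec 0\}}$ through the decomposition $\nabla^2 X_{|K}(t)=\Sigma_1(t,x)+\Omega(t)$ at super-exponentially small cost---are exactly the adaptation the paper intends.
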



\section{Proofs of the main results}\label{sec:proof}
\begin{proof}[Proof of Theorem \ref{Thm:MEC approximation je}]
	By Propositions \ref{Lem:factorial moments je}, \ref{Lem:cross terms disjoint sets} and \ref{Lem:cross terms je}, together 
	with the fact $M_u^E (K)\le M_u (K)$, we obtain that the factorial moments and the last two sums in \eqref{Ineq:lowerbound je} 
	are super-exponentially small. Therefore, from \eqref{Ineq:upperbound je} and \eqref{Ineq:lowerbound je}, it follows that there exists 
	a constant $\alpha>0$ such that as $u\to \infty$,
	\begin{equation*}
		\begin{split}
			\P\left\{\sup_{t\in T} X(t) \geq u\right\}= \sum_{k=0}^N\sum_{K\in \partial_k T} \E \{M_u^E (K) \}+ o\left( \exp \left\{ -\frac{u^2}{2\sigma_T^2} -\alpha u^2 \right\}\right).
		\end{split}
	\end{equation*}	
	This desired result follows as an immediate consequence of Proposition \ref{Prop:simlify the high moment je}.
\end{proof}

\begin{proof}[Proof of Theorem \ref{Thm:MEC approximation je2}]
	 Remark \ref{remark:M_u} indicates that both inequalities \eqref{Ineq:upperbound je} and \eqref{Ineq:lowerbound je} hold with $M_u^E(\cdot)$ 
	 replaced by $M_u(\cdot)$. Therefore, the corresponding factorial moments and the last two sums in \eqref{Ineq:lowerbound je} 
	 with $M_u^E(\cdot)$ replaced by $M_u(\cdot)$ are super-exponentially small by Propositions \ref{Lem:factorial moments je}, 
	 \ref{Lem:cross terms disjoint sets} and \ref{Lem:cross terms je2}. Consequently, there exists a constant $\alpha>0$ such that as $u\to \infty$,
	 \begin{equation*}
	 	\begin{split}
	 		\P\left\{\sup_{t\in T} X(t) \geq u\right\}
	 		= \sum_{k=0}^N\sum_{K\in \partial_k T} \E \{M_u (K)\}+ o\left( \exp \left\{ -\frac{u^2}{2\sigma_T^2} -\alpha u^2 \right\}\right).
	 	\end{split}
	 \end{equation*}	
	 The desired result follows directly from Proposition \ref{Prop:simlify the high moment je2}.
\end{proof}

\begin{proof}[Proof of Theorem \ref{Thm:MEC approximation}]
	Note that, in the proof of Theorem \ref{Thm:MEC approximation je}, we have seen that the points in $\mathcal{M}_2$ defined in 
	\eqref{Eq:M2} make major contribution to the excursion probability. That is, with up to a super-exponentially small error, 
	we can focus only on those faces, say $J$, whose closure $\bar{J}$ contains the unique point 
	$t^*$ with $\nu(t^*)=\sigma_T^2$ and satisfying $\tau(J)\subset \mathcal{I}(t^*)$
	(i.e., the partial derivatives of $\nu$ are 0 at $t^*$ restricted on $J$). To formalize this concept, we define a set of faces $T^*$ as follows:
	\begin{equation*}
		\begin{split}
			T^* &=\{J\in \partial_k T: t^*\in \bar{J}, \, \tau(J)\subset \mathcal{I}(t^*), \, k=0, \ldots, N\}.
		\end{split}
	\end{equation*}
For each $J\in T^*$, let
\begin{equation*}
	\begin{split}
		M_u^{E^*} (J) & := \# \{ t\in J: X(t)\geq u, \nabla X_{|J}(t)=0,  \nabla^2 X_{|J}(t)\prec 0, \\
		& \qquad \qquad \qquad \varepsilon^*_jX_j(t) \geq 0 \ {\rm for \ all}\ j\in \mathcal{I}(t^*)\setminus \tau(J) \}.
	\end{split}
\end{equation*}
Note that, both inequalities \eqref{Ineq:upperbound je} and \eqref{Ineq:lowerbound je} remain valid when we replace $M_u^E(J)$ with $M_u^{E^*}(J)$ for faces $J$ belonging to $T^*$, and replace $M_u^E(J)$ with $M_u(J)$ otherwise. Employing analogous reasoning as used in the derivation of  Theorems \ref{Thm:MEC approximation je} and \ref{Thm:MEC approximation je2}, 
we obtain that, there exists $\alpha>0$ such that as $u\to \infty$,
\begin{equation*}
	\begin{split}
		\P\left\{\sup_{t\in T} X(t) \geq u\right\}= \sum_{J\in T^*} \E \{M_u^{E^*} (J) \}+ o\left( \exp \left\{ -\frac{u^2}{2\sigma_T^2} -\alpha u^2 \right\}\right).
	\end{split}
\end{equation*}	
	This desired result is then deduced from Proposition \ref{Prop:simlify the high moment je}.
\end{proof}


\section{Gaussian fields with a unique maximum point of the variance}\label{sec:unique-max} 

In this section, we delve deeper into EEC approximations when the variance function $\nu(t)$ reaches its maximum value $\sigma_T^2$ at a solitary point $t^*$. While Theorem \ref{Thm:MEC approximation} provides an implicit formula for such scenarios, our objective here is to obtain explicit formulae by employing integral approximation techniques based on the Kac-Rice formula. To facilitate this process, we begin by presenting some auxiliary results related to the Laplace method for integral approximations.

\subsection{Auxiliary lemmas on Laplace approximation}
The following two formulas state the results on the Laplace approximation method. Lemma \ref{Lem:Laplace method 1}
can be found in many books on the approximations of integrals; here we refer to \citet{Wong2001}. Lemma
\ref{Lem:Laplace method 2} can be derived by following similar arguments in the proof of the Laplace method
for the case of boundary points in \cite{Wong2001}.
\begin{lemma} \label{Lem:Laplace method 1}{\rm [Laplace method for interior points]} Let $t_0$ be an interior point of $T$. Suppose the following conditions hold: (i) $g(t) \in C(T)$ and $g(t_0) \neq 0$; (ii) $h(t) \in C^2(T)$ and attains its minimum only at $t_0$; and (iii) $\nabla^2 h(t_0)$ is positive definite. Then as $u \to \infty$,
	\begin{equation*}
		\int_T g(t) e^{-uh(t)} dt =\frac{(2\pi)^{N/2}}{u^{N/2} ({\rm det} \nabla^2 h(t_0))^{1/2}}  g(t_0) e^{-uh(t_0)}  (1+ o(1)).
	\end{equation*}
\end{lemma}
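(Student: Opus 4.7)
The plan is to prove the asymptotic by localizing the integral to a small ball around $t_0$, rescaling via $s = \sqrt{u}(t-t_0)$, and applying dominated convergence to reduce to a Gaussian integral. Since $t_0$ is an interior minimizer of $h \in C^2(T)$, we have $\nabla h(t_0) = 0$, so the leading Taylor expansion at $t_0$ is purely quadratic with Hessian $\nabla^2 h(t_0) \succ 0$. For the localization, note that $t_0$ is the unique minimizer of the continuous function $h$ on the compact set $T$, so for any $\delta>0$ with $\overline{B(t_0,\delta)} \subset T$ there exists $m_\delta >0$ with $h(t) \geq h(t_0) + m_\delta$ on $T \setminus B(t_0,\delta)$. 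Hence the integral over $T \setminus B(t_0,\delta)$ is bounded by $\|g\|_\infty |T| e^{-uh(t_0)} e^{-u m_\delta}$, which is exponentially smaller than the target $O(u^{-N/2}e^{-uh(t_0)})$ and contributes only to the $o(1)$ remainder.

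Next I would Taylor expand $h$ near $t_0$. Since $\nabla^2 h(t_0)$ is positive definite and $h \in C^2$, by shrinking $\delta$ if necessary one arranges that on $B(t_0,\delta)$,
\[
\tfrac{1}{4}(t-t_0)^T \nabla^2 h(t_0)(t-t_0) \leq h(t) - h(t_0) \leq (t-t_0)^T \nabla^2 h(t_0)(t-t_0),
\]
together with the sharper expansion $h(t) - h(t_0) = \tfrac{1}{2}(t-t_0)^T \nabla^2 h(t_0)(t-t_0) + o(\|t-t_0\|^2)$. Substituting $t = t_0 + s/\sqrt{u}$ with $dt = u^{-N/2} ds$, the contribution from $B(t_0,\delta)$ becomes
\[
u^{-N/2} e^{-uh(t_0)} \int_{B(0,\delta\sqrt{u})} g\big(t_0 + s/\sqrt{u}\big) \exp\big\{-u\big[h(t_0 + s/\sqrt{u}) - h(t_0)\big]\big\}\, ds.
\]
By continuity of $g$ and the Taylor expansion, the integrand converges pointwise to $g(t_0)\exp\{-\tfrac{1}{2} s^T \nabla^2 h(t_0) s\}$, while the quadratic lower bound on $h$ provides the uniform majorant $\|g\|_\infty \exp\{-\tfrac{1}{4} s^T \nabla^2 h(t_0) s\}$, which is integrable on $\R^N$. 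Dominated convergence, together with the Gaussian identity $\int_{\R^N} e^{-\frac{1}{2} s^T A s}\, ds = (2\pi)^{N/2}(\det A)^{-1/2}$ applied with $A = \nabla^2 h(t_0)$, then delivers the stated formula.

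The main obstacle is orchestrating the uniform Gaussian domination needed for dominated convergence: one must fix $\delta$ small enough that the comparison $h - h(t_0) \geq \tfrac{1}{4}(t-t_0)^T \nabla^2 h(t_0)(t-t_0)$ holds throughout $B(t_0,\delta)$, while simultaneously preserving a positive gap $m_\delta$ on the complement so that the outer region produces only exponentially negligible error. Once this bookkeeping is settled, the remainder of the argument reduces to a routine Gaussian integral computation.
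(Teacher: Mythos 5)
Your proposal is correct. The paper itself gives no proof of this lemma---it simply cites the standard reference (Wong, \emph{Asymptotic Approximations of Integrals})---and your argument is precisely the classical textbook proof: localization to a ball around $t_0$ using the positive gap $m_\delta$ off the ball, the rescaling $s=\sqrt{u}(t-t_0)$, a two-sided quadratic comparison from the $C^2$ Taylor expansion with $\nabla h(t_0)=0$ and $\nabla^2 h(t_0)\succ 0$, and dominated convergence with the Gaussian majorant $\|g\|_\infty e^{-\frac14 s^T\nabla^2 h(t_0)s}$ to reduce to the Gaussian integral $(2\pi)^{N/2}(\det \nabla^2 h(t_0))^{-1/2}$; the hypothesis $g(t_0)\neq 0$ is exactly what lets you convert the additive $o(u^{-N/2}e^{-uh(t_0)})$ error into the multiplicative $(1+o(1))$ form, and your bookkeeping of $\delta$ (small enough for the quadratic lower bound, with the complement controlled by $m_\delta$) is sound.
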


\begin{lemma} \label{Lem:Laplace method 2}{\rm [Laplace method for boundary points]} Let $t_0\in K \in \partial_k T$ with $0\leq k\leq N-1$. Suppose that conditions (i), (ii) and (iii) in Lemma \ref{Lem:Laplace method 1} hold, and additionally $\nabla h(t_0)=0$. Then as $u \to \infty$,
	\begin{equation*}
		\begin{split}
			\int_T g(t) e^{-uh(t)} dt &=\frac{(2\pi)^{N/2}  \P\{Z_{i_\ell}\ep_{i_\ell}^*>0, \forall i_\ell \notin \tau(K)\}}{u^{N/2} ({\rm det} \nabla^2 h(t_0))^{1/2}}g(t_0) e^{-uh(t_0)} (1+ o(1)),
		\end{split}
	\end{equation*}
	where $(Z_{i_1}, \ldots, Z_{i_{N-k}})$ is a centered $(N-k)$-dimensional Gaussian vector with covariance matrix $(h_{i_\ell i_{\ell'}}(t_0))_{i_\ell,i_{\ell'}\notin \tau(K)}$ and $\tau(K)$ and $\ep_{i_\ell}^*$ are defined in Section \ref{sec:notation}.
\end{lemma}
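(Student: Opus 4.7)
The plan is to adapt the interior-point Laplace method of Lemma~\ref{Lem:Laplace method 1} to the situation where $t_0$ sits on a face $K$ of codimension $N-k$, so that the integration is restricted to a one-sided neighborhood in each of the $N-k$ normal directions. The extra factor $\P\{Z_{i_\ell}\ep_{i_\ell}^* > 0,\ \forall\, i_\ell\notin \tau(K)\}$ should appear precisely as the orthant probability of the limiting Gaussian when the rescaled one-sided half-spaces fill up their respective half-lines.

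First I would localize the integral. Because $h\in C^2(T)$ attains its minimum only at $t_0$ on the compact set $T$, for every sufficiently small $\delta>0$ there is $\eta_\delta>0$ such that $h(t)\ge h(t_0)+\eta_\delta$ whenever $\|t-t_0\|\ge \delta$. Consequently
\[
\int_{T\setminus B(t_0,\delta)} g(t)\, e^{-u h(t)}\,dt = O\!\left(e^{-u(h(t_0)+\eta_\delta)}\right) = o\!\left(u^{-N/2} e^{-u h(t_0)}\right),
\]
which is absorbed into the $(1+o(1))$ factor in the target. On $T\cap B(t_0,\delta)$, using $\nabla h(t_0)=0$, positive-definiteness of $H:=\nabla^2 h(t_0)$, Taylor's theorem, and continuity of $g$, I write
\[
h(t) = h(t_0) + \tfrac{1}{2}(t-t_0)^T H (t-t_0) + r(t),\quad r(t)=o(\|t-t_0\|^2),\quad g(t)=g(t_0)+o(1),
\]
and by shrinking $\delta$ I may also ensure a uniform quadratic lower bound $h(t)-h(t_0)\ge c\|t-t_0\|^2$ on the neighborhood.

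Next I rescale by $s=\sqrt{u}(t-t_0)$, so that $dt = u^{-N/2}\,ds$ and $u h(t)=u h(t_0) + \tfrac{1}{2} s^T H s + u\, r(t_0+s/\sqrt{u})$, with the last remainder tending to $0$ uniformly on compact $s$-sets. The rescaled region is the product, over $i\in\tau(K)$, of the expanding intervals $\sqrt{u}(a_i-t_{0,i},\, b_i-t_{0,i})$ that fill all of $\R$ in the limit; and over $j\notin\tau(K)$, of one-sided intervals on the $T$-side of $t_0$ that become the half-lines $\{s_j:\ep_j^* s_j\le 0\}$, since $t_{0,j}=a_j$ when $\ep_j=0$ and $t_{0,j}=b_j$ when $\ep_j=1$. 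The quadratic lower bound yields an integrable Gaussian majorant on $\R^N$, so dominated convergence produces
\[
\int_{T\cap B(t_0,\delta)} g(t)\, e^{-u h(t)}\,dt = \frac{g(t_0)\, e^{-u h(t_0)}}{u^{N/2}} \int_{R} e^{-\frac{1}{2} s^T H s}\,ds\,\bigl(1+o(1)\bigr),
\]
where $R = \R^{\tau(K)} \times \prod_{j\notin\tau(K)} \{s_j:\ep_j^* s_j\le 0\}$.

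The concluding step is to read off the Gaussian integral on $R$ as an orthant probability. Recognizing $(2\pi)^{-N/2}(\det H)^{1/2} e^{-\frac12 s^T H s}$ as the density of a centered Gaussian, I get $\int_R e^{-\frac12 s^T H s}\, ds = (2\pi)^{N/2}(\det H)^{-1/2}\,\P\{s\in R\}$; integrating out the unconstrained coordinates $s_i$, $i\in\tau(K)$, reduces this to an $(N-k)$-dimensional orthant probability on the normal coordinates, which by the symmetry $s\mapsto -s$ equals $\P\{Z_{i_\ell}\ep_{i_\ell}^* > 0,\ \forall\, i_\ell\notin\tau(K)\}$ for the appropriate centered $(N-k)$-dimensional Gaussian vector $(Z_{i_\ell})$ determined by the quadratic form $s^T H s$. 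Assembling with $u^{-N/2} g(t_0) e^{-u h(t_0)}$ yields the claimed asymptotic. The main technical obstacle is justifying the uniform passage to the limit after the $\sqrt{u}$-rescaling on an unbounded region; this is handled by the localization together with the quadratic lower bound, which jointly supply the Gaussian dominating function needed for dominated convergence.
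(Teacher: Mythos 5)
Your overall route -- localize near $t_0$, Taylor-expand $h$ using $\nabla h(t_0)=0$ and $\nabla^2 h(t_0)\succ 0$, rescale by $\sqrt{u}$, and pass to the limit under a Gaussian dominating function -- is exactly the Wong-style argument the paper itself points to (the paper offers no proof beyond the citation), and the analytic steps are sound: the localization bound, the observation that the rescaled tangential coordinates fill $\R$ while each normal coordinate fills the half-line $\{\ep_j^* s_j\le 0\}$, and the dominated-convergence passage all work as you describe.

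The gap is in the single step that is not routine: identifying the law of the Gaussian vector whose orthant probability appears, which you dispatch with ``the appropriate centered $(N-k)$-dimensional Gaussian vector determined by the quadratic form $s^THs$.'' That is precisely where the covariance named in the statement, $(h_{i_\ell i_{\ell'}}(t_0))_{i_\ell,i_{\ell'}\notin\tau(K)}$, has to be verified, and a careful marginalization does not produce it. Writing $H=\nabla^2 h(t_0)$ in block form with tangential block $H_{\tau\tau}$, normal block $H_{nn}$ and cross block $H_{\tau n}$, integrating out the unconstrained tangential coordinates leaves the factor $\exp\{-\tfrac12 s_n^T(H_{nn}-H_{n\tau}H_{\tau\tau}^{-1}H_{\tau n})s_n\}$, so the orthant probability you obtain is that of a centered Gaussian whose covariance is $(H^{-1})_{nn}$, the normal block of the \emph{inverse} Hessian (equivalently, the inverse of the Schur complement) -- not the normal block $H_{nn}$ of the Hessian that the statement specifies. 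When $N-k=1$ both readings give $1/2$, so nothing is lost there, but for $N-k\ge 2$ they generally differ. Concretely, take $N=2$, $k=0$, $T=[0,1]^2$, $t_0=(0,0)$, $g\equiv 1$, $h(t)=\tfrac12(t_1^2+t_2^2+t_1t_2)$: a direct computation gives $\int_T e^{-uh(t)}dt\sim \frac{2\pi}{3\sqrt{3}}u^{-1}$, which matches the orthant probability $1/6$ of a centered Gaussian with covariance $H^{-1}$ (correlation $-1/2$), whereas covariance $H$ (correlation $+1/2$) gives orthant probability $1/3$ and hence twice the true value. So your argument, completed honestly, proves the lemma with the covariance $(H^{-1})_{i_\ell i_{\ell'}}$ in place of $(h_{i_\ell i_{\ell'}}(t_0))$; as written it does not verify the covariance claimed in the statement, and you should either carry out the marginalization and record the covariance you actually get, or note explicitly the special situations (one constrained direction, or vanishing relevant off-diagonal terms) in which the two coincide.
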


\subsection{Gaussian fields satisfying the boundary condition \eqref{Eq:boundary}}
For $t\in T$, we define the following notation for conditional variances:
\begin{equation}\label{eq:La}
	\begin{split}
		\tilde{\nu}_{|K}(t) = {\rm Var}(X(t)|\nabla X_{|K}(t)=0), \quad \tilde{\nu}(t) = {\rm Var}(X(t)|\nabla X_{|K}(t)=0).
	\end{split}
\end{equation}
The following result provides explicit approximations to the excursion probabilities when the maximum of the variance is reached only at a single point and the boundary condition \eqref{Eq:boundary} is satisfied.
\begin{theorem}\label{thm:unique-max-boundary} Let $\{X(t),\, t\in T\}$ be a centered Gaussian random field satisfying $({\bf H}1)$ and $({\bf H}2)$. Suppose $\nu$ attains its maximum $\sigma_T^2$ only at $t^* \in K \in \partial_k T$, $\nu_i(t^*)\neq 0$ for all $i\notin \tau(K)$, and $\nabla^2 \nu_{|K}(t^*)\prec 0$. Then, as $u\to \infty$, 
	\begin{equation}\label{eq:interior1}
		\begin{split}
			\P\left\{\sup_{t\in T} X(t) \geq u\right\}
			&= \Psi\left(\frac{u}{\sigma_T}\right) +o\left( \exp \left\{ -\frac{u^2}{2\sigma_T^2} -\alpha u^2 \right\}\right) \text{\rm for some $\alpha>0$,}\quad \text{ if } k=0,\\
			\P \left\{\sup_{t\in T} X(t) \geq u \right\} &= \sqrt{\frac{{\rm det}(\Sigma_K(t^*))}{{\rm det}(\La_K(t^*) + \Sigma_K(t^*))}}\Psi\left(\frac{u}{\sigma_T}\right)(1+o(1)), \quad \text{ if } k\ge 1,
		\end{split}
	\end{equation}
	where $\La_K(t^*)$ and $\Sigma_K(t^*)$ are defined in \eqref{eq:Sigma}.
\end{theorem}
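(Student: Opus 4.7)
My plan is to derive the explicit formula by specializing the general EEC approximation of Theorem \ref{Thm:MEC approximation je2} to this setting, localizing the resulting sum to the unique maximizer $t^*$, and applying the Laplace method on the remaining face $K$. First, the hypotheses imply the boundary condition \eqref{Eq:boundary}: since $\nu$ attains $\sigma_T^2$ only at $t^*$ and $\nu_i(t^*)\ne0$ for every $i\notin\tau(K)$, no face of $T$ contains a point where $\nu=\sigma_T^2$ with some fixed-coordinate derivative vanishing. Thus Theorem \ref{Thm:MEC approximation je2} expresses $\P\{\sup_{t\in T}X(t)\ge u\}$ as a sum of Kac-Rice integrals over all faces, up to a super-exponentially small error.

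Next I would reduce this sum to a single face. For any face $J$ with $t^*\notin\bar J$, $\sup_{\bar J}\nu<\sigma_T^2$, so the integral over $J$ is super-exponentially small. For faces $J$ with $t^*\in\bar J\setminus K$ we necessarily have $\tau(J)\supsetneq\tau(K)$; for any $i\in\tau(J)\setminus\tau(K)$, $\E\{X(t^*)X_i(t^*)\}=\nu_i(t^*)/2\ne0$, so conditioning on $\nabla X_{|J}(t^*)=0$ strictly reduces the variance, giving $\tilde\nu_{|J}(t^*)<\sigma_T^2$. Continuity of $\tilde\nu_{|J}$ together with the uniqueness of $t^*$ as maximizer of $\nu$ yield $\sup_{\bar J}\tilde\nu_{|J}<\sigma_T^2$, and the same Kac-Rice tail bounds used in Section \ref{sec:diff} show that the corresponding integral is again super-exponentially small. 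When $k=0$, the face $K=\{t^*\}$ contributes exactly $\P\{X(t^*)\ge u\}=\Psi(u/\sigma_T)$, which proves the first identity.

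For $k\ge 1$, I would write the contribution of $K$ in the joint form
\begin{equation*}
(-1)^k\int_K dt\int_u^\infty dx\,p_{X(t),\nabla X_{|K}(t)}(x,0)\,\E\bigl\{{\rm det}\nabla^2 X_{|K}(t)\bigm|X(t)=x,\nabla X_{|K}(t)=0\bigr\}
\end{equation*}
and apply the interior Laplace method of Lemma \ref{Lem:Laplace method 1} to the $t$-integral. At $t^*$, $\E\{X(t^*)\nabla X_{|K}(t^*)\}=\nabla\nu_{|K}(t^*)/2=0$, so the joint covariance of $(X(t^*),\nabla X_{|K}(t^*))$ is block-diagonal with $\tilde\nu_{|K}(t^*)=\sigma_T^2$, and the conditional mean of $\nabla^2 X_{|K}(t^*)$ given $X(t^*)=x$, $\nabla X_{|K}(t^*)=0$ equals $x\Sigma_K(t^*)/\sigma_T^2$; hence the conditional expected determinant equals $(x/\sigma_T^2)^k{\rm det}\,\Sigma_K(t^*)+O(x^{k-2})$ for large $x$. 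Using the identity $\nabla^2\nu_{|K}(t^*)=2(\La_K(t^*)+\Sigma_K(t^*))$ (from differentiating $\nu=\E\{X^2\}$ twice), a direct computation gives $-\nabla^2\tilde\nu_{|K}(t^*)=2(\La_K(t^*)+\Sigma_K(t^*))\La_K(t^*)^{-1}\Sigma_K(t^*)$, whose determinant equals $2^k{\rm det}(\La_K(t^*)+\Sigma_K(t^*)){\rm det}\,\Sigma_K(t^*)/{\rm det}\,\La_K(t^*)$. The Gaussian $t$-integral (with effective covariance $\sim\sigma_T^4/x^2$) cancels the factor $x^k$, the remaining $x$-integral $\int_u^\infty e^{-x^2/(2\sigma_T^2)}\,dx=\sqrt{2\pi}\sigma_T\Psi(u/\sigma_T)$ gives the leading order, and after collecting all $\sigma_T$, $2\pi$, and $2^{k/2}$ factors, the algebra collapses to $\sqrt{{\rm det}\,\Sigma_K(t^*)/{\rm det}(\La_K(t^*)+\Sigma_K(t^*))}\,\Psi(u/\sigma_T)(1+o(1))$.

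The main obstacle is the coupled $(t,x)$ Laplace argument: one must restrict $t$ to a shrinking neighborhood of $t^*$ of radius $\sim u^{-1}$ so that the cubic Taylor remainder of $1/\tilde\nu_{|K}$ is negligible uniformly in $x\ge u$, bound the complementary region by the super-exponential estimate above, and verify that the $O(x^{k-2})$ correction in the conditional expected determinant feeds only into the $(1+o(1))$ factor. The algebraic simplification of the final determinant ratio is also delicate but follows mechanically from the two identities above for $\nabla^2\nu_{|K}(t^*)$ and $-\nabla^2\tilde\nu_{|K}(t^*)$.
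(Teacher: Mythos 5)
Your proposal is correct and follows essentially the paper's own proof: reduce (up to super-exponentially small terms) to the single Kac--Rice integral over the face $K$ containing $t^*$, then apply the interior Laplace method with $h=1/(2\tilde{\nu}_{|K})$, the leading-order conditional determinant $(x/\sigma_T^2)^k\,{\rm det}\,\Sigma_K(t^*)$, and the Hessian identity $-\nabla^2\tilde{\nu}_{|K}(t^*)=2(\La_K(t^*)+\Sigma_K(t^*))\La_K^{-1}(t^*)\Sigma_K(t^*)$, exactly as in \eqref{eq:nu''}. The only cosmetic differences are that you reach the single-face reduction via Theorem \ref{Thm:MEC approximation je2} (after verifying \eqref{Eq:boundary}) together with a direct conditional-variance bound on the faces $J$ with $t^*\in\bar{J}$, $\tau(J)\supsetneq\tau(K)$, where the paper simply invokes Theorem \ref{Thm:MEC approximation}, and that you extract the leading term by multilinearity of the determinant rather than the paper's normalization $Q(-\Sigma_K(t^*))Q=I_k$.
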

\begin{proof} If $k=0$, then $\nu_i(t^*)\neq 0$ for all $i\ge 1$, and hence $\mathcal{I}(t^*)=\emptyset$. The first line of \eqref{eq:interior1} follows from Theorem \ref{Thm:MEC approximation} that 
	\begin{equation*}
		\begin{split}
			\P\left\{\sup_{t\in T} X(t) \geq u\right\}
			= \P\{X(t^*)\geq u\} +o\left( \exp \left\{ -\frac{u^2}{2\sigma_T^2} -\alpha u^2 \right\}\right).
		\end{split}
	\end{equation*}
Now, let us consider the case when $k\ge 1$. Note that the assumption on partial derivatives of $\nu(t)$ implies $\mathcal{I}(t^*)=\tau(K)$. By Theorem \ref{Thm:MEC approximation}, we have
	\begin{equation}\label{eq:EP=k1}
	\begin{split}
		 \P \left\{\sup_{t\in T} X(t) \geq u \right\} = (-1)^k I(u, K) + o\left( \exp \left\{ -\frac{u^2}{2\sigma_T^2} -\alpha u^2 \right\}\right),
	\end{split}
\end{equation}
where
\begin{equation*}
	\begin{split}
		I(u,K)& =\int_K \E\big\{{\rm det}\nabla^2 X_{|K}(t)\mathbbm{1}_{\{X(t)\geq u\}} \big| \nabla X_{|K}(t)=0\big\} p_{\nabla X_{|K}(t)}(0)dt \\
		&=\int_u^\infty \int_K  \frac{{(2\pi)^{-(k+1)/2}} }{\sqrt{\tilde{\nu}_{|K}(t){\rm det}\left(\La_K(t)\right)}}   \E\big\{{\rm det}\nabla^2 X_{|K}(t)\big|X(t)=x, \nabla X_{|K}(t)=0\big\} e^{-\frac{x^2}{2\tilde{\nu}(t)}}dtdx.
	\end{split}
\end{equation*}
Applying the Laplace method in Lemma \ref{Lem:Laplace method 1} with
	\begin{equation*}
		\begin{split}
			g(t) &= \frac{1}{\sqrt{\tilde{\nu}_{|K}(t){\rm det}\left(\La_K(t)\right)}}   \E\big\{{\rm det}\nabla^2 X_{|K}(t)\big|X(t)=x, \nabla X_{|K}(t)=0\big\},\\
			h(t) &= \frac{1}{{2\tilde{\nu}_{|K}(t)}}, \quad u = x^2,
		\end{split}
	\end{equation*}
	and noting that the Hessian matrix of $1/(2\tilde{\nu}_{|K}(t))$ evaluated at $t^*$ is 
	\begin{equation}\label{eq:Theta-Hessian}
		-\frac{1}{2\tilde{\nu}_{|K}^2(t^*)} \left(\tilde{\nu}_{ij}(t^*)\right)_{i, j \in \tau(K)}= -\frac{1}{2\sigma_T^4} \nabla^2 \tilde{\nu}_{|K}(t^*)\succ 0,
	\end{equation}
we obtain
	\begin{equation}\label{eq:I(u,k)2}
		\begin{split}
			I(u,K) &= \frac{(2\sigma_T^4)^{k/2}}{\sqrt{2\pi\sigma_T^2{\rm det}\left(\La_K(t^*)\right)}\sqrt{|{\rm det}\nabla^2 \tilde{\nu}_{|K}(t^*)|}} I(u) (1+o(1)),
		\end{split}
	\end{equation}
where 
\begin{equation}\label{eq:Q}
	\begin{split}
		I(u)&=\int^\infty_u \E\big\{{\rm det} \nabla^2 X_{|K}(t^*) \big|X(t^*)=x, \nabla X_{|K}(t^*)=0 \big\} x^{-k}e^{-\frac{x^2}{2\sigma_T^2}}\, dx\\
		& ={\rm det}(\Sigma_K(t^*))\int^\infty_u \E\big\{{\rm det} (Q\nabla^2 X_{|K}(t^*)Q) \big|X(t^*)=x, \nabla X_{|K}(t^*)=0 \big\} x^{-k}e^{-\frac{x^2}{2\sigma_T^2}} dx.
	\end{split}
\end{equation}
Here, noting that $\Sigma_K(t^*)=\E\{X(t)\nabla^2 X_{|K}(t^*)\}\prec 0$ by Proposition \ref{prop:H3}, we let $Q$ in \eqref{eq:Q} be a $k \times k $ positive definite matrix such that $Q(-\Sigma_K(t^*))Q = I_k$, where $I_k$ is the size-$k$ identity matrix. Then
\begin{equation*}
	\E\{X(t)(Q \nabla^2 X_{|K}(t^*) Q)\} = Q\Sigma_K(t^*)Q = -I_k.
\end{equation*}
Notice that $\E\{X(t^*)\nabla X_{|K}(t^*)\}=0$ due to $\nu_{|K}(t^*)=0$, we have
\begin{equation*}
	\begin{split}
		\E\big\{Q \nabla^2 X_{|K}(t^*) Q \big|X(t^*)=x, \nabla X_{|K}(t^*)=0 \big\} = -\frac{x}{\sigma_T^2}I_k.
	\end{split}
\end{equation*}
One can write
\begin{equation*}
	\begin{split}
		\E\big\{{\rm det} (Q\nabla^2 X_{|K}(t^*)Q) \big|X(t^*)=x, \nabla X_{|K}(t^*)=0 \big\} = \E\{{\rm det} (\Delta(t^*) - (x/\sigma_T^2)I_k) \},
	\end{split}
\end{equation*}
where $\Delta(t^*)$ is a centered Gaussian random matrix with covariance independent of $x$. According to the Laplace expansion of determinant, $\E\{{\rm det} (\Delta(t^*) - (x/\sigma_T^2)I_k) \}$ is a polynomial in $x$ with the highest-order term being $(-1)^k\sigma_T^{-2k}x^k$. Plugging this into \eqref{eq:Q} and \eqref{eq:I(u,k)2}, we obtain
\[
I(u,K) = \frac{(-1)^k2^{k/2}|{\rm det}(\Sigma_K(t^*))|}{\sqrt{{\rm det}(\La_K(t^*))}\sqrt{|{\rm det}\left(\nabla^2 \tilde{\nu}_{|K}(t^*)\right)|}}\Psi\left(\frac{u}{\sigma_T}\right)(1+o(1)).
\]
Finally, note that 
\[
\tilde{\nu}_{|K}(t) = \E\{X(t)^2\} - \E\{X(t)\nabla X_{|K}(t)\}^T \La_K^{-1}(t)\E\{X(t)\nabla X_{|K}(t)\},
\]
we have
\begin{equation}\label{eq:nu''}
	\begin{split}
\nabla^2 \tilde{\nu}_{|K}(t^*) &= 2[\La_K(t^*) + \Sigma_K(t^*)] - 2[\La_K(t^*) + \Sigma_K(t^*)]\La_K^{-1}(t^*)[\La_K(t^*) + \Sigma_K(t^*)]\\
&= -2\Sigma_K(t^*)[I_k + \La_K^{-1}(t^*)\Sigma_K(t^*)].
\end{split}
\end{equation}
Therefore,
\[
I(u,K) = (-1)^k\sqrt{\frac{{\rm det}(\Sigma_K(t^*))}{{\rm det}(\La_K(t^*) + \Sigma_K(t^*))}}\Psi\left(\frac{u}{\sigma_T}\right)(1+o(1)),
\]
where $\Sigma_K(t^*)\prec 0$ by Proposition \ref{prop:H3} and $\La_K(t^*) + \Sigma_K(t^*)=\nabla^2 \nu_{|K}(t^*)/2\prec 0$ by assumption. Plugging this into \eqref{eq:EP=k1} yields the desired result.
\end{proof}

Now we apply Theorem \ref{thm:unique-max-boundary} to the 1D case when $T=[a, b]$. If $t^*=a$ or $t^*=b$, then it is a direct application of the first line in \eqref{eq:interior1}. If $t^* \in (a, b)$, then it follows from \eqref{eq:interior1} that
\[
\P \left\{\sup_{t\in [a,b]} X(t) \geq u \right\} = \sqrt{\frac{\E\{X(t^*)X''(t^*)\}}{{\rm Var}(X'(t^*))+\E\{X(t^*)X''(t^*)\}}}\Psi\left(\frac{u}{\sigma_T}\right)(1+o(1)).
\]

\subsection{Gaussian fields not satisfying the boundary condition \eqref{Eq:boundary}}
We consider here the other case when $\nu_i(t^*)\neq 0$ for some $i\notin \tau(K)$. For a symmetric matrix $B=(B_{ij})_{1\le i,j\le N}$, we call $(B_{ij})_{i,j\in \mathcal{I}}$ the matrix $B$ with indices restricted on $\mathcal{I}$.
\begin{theorem}\label{thm:unique-max} Let $\{X(t),\, t\in T\}$ be a centered Gaussian random field satisfying $({\bf H}1)$ and $({\bf H}2)$. Suppose $\nu$ attains its maximum $\sigma_T^2$ only at $t^* \in K \in \partial_k T$ such that $ \mathcal{I}(t^*)\setminus \tau(K)$ contains $m\ge 1$ indices and $(\nu_{ii'}(t^*))_{i,i'\in \mathcal{I}(t^*)}\prec 0$. Then, as $u\to \infty$,
	\begin{equation}\label{eq:interior2}
		\begin{split}
			&\quad \P\left\{\sup_{t\in T} X(t) \geq u\right\}\\
			&= \sum_{J} \sqrt{\frac{{\rm det}(\Sigma_J(t^*))}{{\rm det}(\La_J(t^*) + \Sigma_J(t^*))}}\P\{(Z_{J_1'}, \ldots, Z_{J_{j-k}'})\in E'(J)\}\\
			&\quad \times \P\big\{(X_{J_1}(t^*), \ldots, X_{J_{k+m-j}}(t^*))\in E(J) \big| \nabla X_{|J}(t^*)=0\big\}\Psi\left(\frac{u}{\sigma_T}\right)(1+o(1)),
		\end{split}
	\end{equation}
where the sum is taken over all faces $J$ such that $t^*\in \bar{J}$ and $\tau(J)\subset \mathcal{I}(t^*)$, $j={\rm dim}(J)$, 
\begin{equation*}
	\begin{split}
		&(J_1, \ldots, J_{k+m-j})=\mathcal{I}(t^*)\setminus \tau(J), \quad
		(J_1', \ldots, J_{j-k}')=\tau(J)\setminus \tau(K), \\
		&E(J)=\{(y_{J_1}, \ldots, y_{J_{k+m-j}})\in \R^{k+m-j}: \varepsilon^*_{J_\ell}(J) y_{J_\ell} \geq 0, \, \forall \ell= 1, \ldots, k+m-j\},\\
		&E'(J)=\{(y_{J_1'}, \ldots, y_{J_{j-k}'})\in \R^{j-k}: \varepsilon^*_{J_\ell'}(K) y_{J_\ell'} \geq 0, \, \forall \ell= 1, \ldots, j-k\},		
	\end{split}
\end{equation*}
$\varepsilon^*_{J_\ell}(J)$ and $\varepsilon^*_{J_\ell'}(K)$ are the $\ep^*$ numbers for faces $J$ and $K$ respectively, $(Z_{J_1'}, \ldots, Z_{J_{j-k}'})$ is a centered Gaussian vector having covariance matrix $\Sigma(t^*) + \Sigma(t^*)\La^{-1}(t^*)\Sigma(t^*)$ with indices restricted on $\tau(J)\setminus \tau(K)$, and $\La_J(t^*)$ and $\Sigma_J(t^*)$ are defined in \eqref{eq:Sigma}. In particular, for $k=0$, the term inside the sum in \eqref{eq:interior2} with $J=K=\{t^*\}$ is given by
\[
\P\{(X_{J_1}(t^*), \ldots, X_{J_m}(t^*))\in E(J) \}\Psi\left(\frac{u}{\sigma_T}\right).
\]
\end{theorem}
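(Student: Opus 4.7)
The plan is to start from Theorem~\ref{Thm:MEC approximation}, which reduces the excursion probability, up to a super-exponentially small error, to a sum of Kac--Rice integrals over those faces $J$ with $t^*\in\bar{J}$ and $\tau(J)\subset\mathcal{I}(t^*)$. For each such $J$, I would apply the Laplace method in $t$ and then evaluate the resulting Gaussian tail integral in $x$, following the template used to prove Theorem~\ref{thm:unique-max-boundary}. The two genuinely new features to handle are that (a) $t^*$ now typically lies on the relative boundary of $J$ (whenever $\tau(K)\subsetneq\tau(J)$), so one must use the \emph{boundary} version of the Laplace method; and (b) the indicator $\mathbbm{1}_{\{\varepsilon^*_\ell X_\ell(t)\ge 0,\ \ell\in\mathcal{I}(t^*)\setminus\tau(J)\}}$ carries through the calculation and contributes the conditional orthant factor involving $E(J)$.

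For the reduction step, I would condition on $X(t)=x$ inside the Kac--Rice integrand; because $\tau(J)\subset\mathcal{I}(t^*)$ we have $\E\{X(t^*)\nabla X_{|J}(t^*)\}=0$, so the joint density of $(X(t),\nabla X_{|J}(t))$ at $(x,0)$ produces the usual exponential factor $e^{-x^2/(2\tilde\nu_{|J}(t))}$ with $\tilde\nu_{|J}(t)$ as in \eqref{eq:La}, together with a polynomial prefactor from $\det\La_J(t)$. The dominant contribution concentrates at $t=t^*$, and at that point the conditional means $\E\{X_{J_\ell}(t^*)\mid X(t^*)=x,\nabla X_{|J}(t^*)=0\}$ vanish for every $J_\ell\in\mathcal{I}(t^*)\setminus\tau(J)$, because the relevant entries of $\E\{X(t^*)\nabla X(t^*)\}$ are zero by $\tau(J)\cup\{J_1,\ldots,J_{k+m-j}\}=\mathcal{I}(t^*)$. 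Consequently, at leading order the indicator turns into the conditional orthant probability $\P\{(X_{J_1}(t^*),\ldots,X_{J_{k+m-j}}(t^*))\in E(J)\mid \nabla X_{|J}(t^*)=0\}$.

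Next I would apply the Laplace method with $h(t)=1/(2\tilde\nu_{|J}(t))$. The assumption $\tau(J)\subset\mathcal{I}(t^*)$ yields $\nabla\tilde\nu_{|J}(t^*)=0$, and the identity
\[
\nabla^2\tilde\nu_{|J}(t^*)=-2\bigl(\Sigma_J(t^*)+\Sigma_J(t^*)\La_J^{-1}(t^*)\Sigma_J(t^*)\bigr),
\]
derived just as in \eqref{eq:nu''}, combined with $(\nu_{ii'}(t^*))_{i,i'\in\mathcal{I}(t^*)}\prec 0$ and Proposition~\ref{prop:H3}, gives $\nabla^2 h(t^*)\succ 0$. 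When $J=K$, Lemma~\ref{Lem:Laplace method 1} applies directly; when $J\supsetneq K$, the point $t^*$ sits in the $k$-dimensional relative boundary face of $\bar J$ corresponding to directions $\tau(J)\setminus\tau(K)$, and Lemma~\ref{Lem:Laplace method 2} produces the orthant factor $\P\{(Z_{J_1'},\ldots,Z_{J_{j-k}'})\in E'(J)\}$ with covariance equal to $\nabla^2 h(t^*)$ restricted to $\tau(J)\setminus\tau(K)$. Since orthant probabilities are invariant under positive rescaling of the covariance, the covariance $\Sigma(t^*)+\Sigma(t^*)\La^{-1}(t^*)\Sigma(t^*)$ stated in the theorem follows directly from the displayed formula for $\nabla^2\tilde\nu_{|J}(t^*)$, absorbing the irrelevant $\sigma_T^{-4}$ scaling.

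Finally, the remaining one-dimensional integral in $x$ is evaluated exactly as in the proof of Theorem~\ref{thm:unique-max-boundary}: the conditional expectation $\E\{\det\nabla^2 X_{|J}(t^*)\mid X(t^*)=x,\nabla X_{|J}(t^*)=0\}$ is a polynomial in $x$ whose leading term is $(-1)^j\sigma_T^{-2j}x^j\det\Sigma_J(t^*)$ (via the change of variable $Q\Sigma_J(t^*)Q=-I_j$), and integration against $x^{-j}e^{-x^2/(2\sigma_T^2)}$ produces the tail factor $\Psi(u/\sigma_T)$. Collecting the Jacobian $1/\sqrt{\det\La_J(t^*)}$, the Laplace prefactor $1/\sqrt{|\det\nabla^2\tilde\nu_{|J}(t^*)|}$ and the powers of $\sigma_T$ assembles into the determinantal ratio $\sqrt{\det\Sigma_J(t^*)/\det(\La_J(t^*)+\Sigma_J(t^*))}$, matching Theorem~\ref{thm:unique-max-boundary}; the special case $k=0$, $J=K=\{t^*\}$ (with no Laplace step and no gradient conditioning) reproduces the final displayed formula of the theorem. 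The main obstacle I anticipate is the index bookkeeping in the Laplace step: keeping straight which coordinates $\tau(J)\setminus\tau(K)$ contribute the $Z$-orthant factor and which coordinates $\mathcal{I}(t^*)\setminus\tau(J)$ contribute the conditional $X_{J_\ell}(t^*)$-orthant factor, and verifying that the rescaled Hessian of $1/(2\tilde\nu_{|J})$ at $t^*$ indeed recovers the stated covariance $\Sigma(t^*)+\Sigma(t^*)\La^{-1}(t^*)\Sigma(t^*)$ on the correct block.
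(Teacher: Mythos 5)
Your proposal follows essentially the same route as the paper's own proof: reduce via Theorem \ref{Thm:MEC approximation} to Kac--Rice integrals over the faces $J$ with $t^*\in\bar J$ and $\tau(J)\subset\mathcal{I}(t^*)$, apply the boundary Laplace method (Lemma \ref{Lem:Laplace method 2}) with $h=1/(2\tilde\nu_{|J})$ to produce the $E'(J)$-orthant factor, use the vanishing of $\E\{X(t^*)X_\ell(t^*)\}$ for $\ell\in\mathcal{I}(t^*)$ to turn the remaining indicator into the conditional $E(J)$-orthant probability, and extract the leading $x^j$ term of the conditional determinant exactly as in Theorem \ref{thm:unique-max-boundary}, with the identity \eqref{eq:nu''} converting the Hessian of $\tilde\nu_{|J}$ into the stated determinant ratio and $Z$-covariance. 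This matches the paper's argument in structure and detail (up to the same level of sign bookkeeping the paper itself uses), so no changes are needed.
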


\begin{proof} We first prove the case when $k\ge 1$. By Theorem \ref{Thm:MEC approximation}, we have
	\begin{equation}\label{eq:EP=k11}
			\begin{split}
				\P\left\{\sup_{t\in T} X(t) \geq u\right\}
				&= \sum_{J}(-1)^j I(u,J) +o\left( \exp \left\{ -\frac{u^2}{2\sigma_T^2} -\alpha u^2 \right\}\right),
			\end{split}
		\end{equation}
	where $j={\rm dim}(J)$, the sum is taken over all faces $J$ such that $t^*\in \bar{J}$ and $\tau(J)\subset \mathcal{I}(t^*)$, and
	\begin{equation*}
		\begin{split}
			I(u,J)& =\int_J \E\big\{{\rm det}\nabla^2 X_{|J}(t)\mathbbm{1}_{\{X(t)\geq u\}} \mathbbm{1}_{\{\varepsilon^*_\ell X_\ell(t) \geq 0, \, \forall \ell\in \mathcal{I}(t^*)\setminus \tau(J)\}}\big| \nabla X_{|J}(t)=0\big\} p_{\nabla X_{|J}(t)}(0)dt \\
			&= \int_u^\infty \int_J  \frac{{(2\pi)^{-(j+1)/2}} }{\sqrt{\tilde{\nu}_{|J}(t){\rm det}\left(\La_J(t)\right)}}   \E\big\{{\rm det}\nabla^2 X_{|K}(t)\mathbbm{1}_{\{\varepsilon^*_\ell X_\ell(t) \geq 0, \, \forall \ell\in \mathcal{I}(t^*)\setminus \tau(J)\}}\big|X(t)=x, \\
			&\quad \nabla X_{|J}(t)=0\big\}  e^{-\frac{x^2}{2\tilde{\nu}_{|J}(t)}}dtdx.
		\end{split}
	\end{equation*}
	Applying the Laplace method in Lemma \ref{Lem:Laplace method 2} with
	\begin{equation*}
		\begin{split}
			g(t) &= \frac{1}{\sqrt{\tilde{\nu}_{|J}(t){\rm det}\left(\La_J(t)\right)}}   \E\big\{{\rm det}\nabla^2 X_{|J}(t)\mathbbm{1}_{\{\varepsilon^*_\ell X_\ell(t) \geq 0, \, \forall \ell\in \mathcal{I}(t^*)\setminus \tau(J)\}}\big|X(t)=x, \nabla X_{|J}(t)=0\big\},\\
			h(t) &= \frac{1}{{2\tilde{\nu}_{|J}(t)}}, \quad u = x^2,
		\end{split}
	\end{equation*}
	we obtain
	\begin{equation*}
		\begin{split}
			I(u,J) &= \frac{(2\sigma_T^4)^{j/2}\P\{(Z_{J_1'}, \ldots, Z_{J_{j-k}'})\in E'(J)\}}{\sqrt{2\pi\sigma_T^2{\rm det}\left(\La_J(t^*)\right)}\sqrt{|{\rm det}\nabla^2 \tilde{\nu}_{|J}(t^*)|}} I(u) (1+o(1)),
		\end{split}
	\end{equation*}
	where $(Z_{J_1'}, \ldots, Z_{J_{j-k}'})$ is a centered $(j-k)$-dimensional Gaussian vector having covariance matrix $\nabla^2 h(t^*)$ with indices restricted on $\tau(J)\setminus \tau(K)$, and
	\begin{equation}\label{eq:Q1}
		\begin{split}
			I(u)&=\int^\infty_u \E\big\{{\rm det} \nabla^2 X_{|J}(t^*) \mathbbm{1}_{\{\varepsilon^*_\ell X_\ell(t^*) \geq 0, \, \forall \ell\in \mathcal{I}(t^*)\setminus \tau(J)\}} \big|X(t^*)=x, \nabla X_{|J}(t^*)=0 \big\} x^{-j}e^{-\frac{x^2}{2\sigma_T^2}}\, dx\\
			& ={\rm det}(\Sigma_J(t^*))\int^\infty_u \int_{E(J)} \E\big\{{\rm det} (Q\nabla^2 X_{|J}(t^*)Q) \big|X(t^*)=x, \nabla X_{|J}(t^*)=0, X_{J_1}(t^*)=y_{J_1}, \\
			&\quad  \ldots, X_{J_{k+m-j}}(t^*)=y_{J_{k+m-j}} \big\} p(y_{J_1}, \ldots, y_{J_{k+m-j}}|x,0)x^{-j}e^{-\frac{x^2}{2\sigma_T^2}} dy_{J_1}\cdots dy_{J_{k+m-j}}dx.
		\end{split}
	\end{equation}
	Here $p(y_{J_1}, \ldots, y_{J_{k+m-j}}|x,0)$ is the pdf of $(X_{J_1}(t^*), \ldots, X_{J_{k+m-j}}(t^*) | X(t^*)=x, \nabla X_{|J}(t^*)=0)$, and $Q$ is a $j \times j $ positive definite matrix such that $Q(-\Sigma_J(t^*))Q = I_j$. Then, similarly to the arguments in the proof of Theorem \ref{thm:unique-max-boundary}, one can write the last expectation in \eqref{eq:Q1} as 
	\begin{equation*}
		\begin{split}
			\E\{{\rm det} (\Delta(t^*, y_{J_1}, \ldots, y_{J_{k+m-j}}) - (x/\sigma_T^2)I_k) \},
		\end{split}
	\end{equation*}
	where $\Delta(t^*, y_{J_1}, \ldots, y_{J_{k+m-j}})$ is a centered Gaussian random matrix with covariance independent of $x$, and hence the highest-order term in $x$ is $(-1)^jx^j/\sigma_T^{2j}$. Noting that $\E\{X(t^*)X_i(t^*)\}=0$ for all $i\in \mathcal{I}(t^*)$ and following similar arguments in the proof of Theorem \ref{thm:unique-max-boundary}, we obtain
	\begin{equation*}
		\begin{split}
	I(u,J) &= (-1)^j\sqrt{\frac{{\rm det}(\Sigma_J(t^*))}{{\rm det}(\La_J(t^*) + \Sigma_J(t^*))}}\P\{(Z_{J_1'}, \ldots, Z_{J_{j-k}'})\in E'(J)\}\\
	&\quad \times \P\{(X_{J_1}(t^*), \ldots, X_{J_{k+m-j}}(t^*))\in E(J) | \nabla X_{|J}(t^*)=0\}\Psi\left(\frac{u}{\sigma_T}\right)(1+o(1)),
\end{split}
\end{equation*}
	which yields the desired result together with \eqref{eq:EP=k11}. In particular, by \eqref{eq:nu''}, one can treat $(Z_{J_1'}, \ldots, Z_{J_{j-k}'})$ having covariance $\Sigma(t^*) + \Sigma(t^*)\La^{-1}(t^*)\Sigma(t^*)$ with indices restricted on $\tau(J)\setminus \tau(K)$ while not changing the probability that it falls in $E(J)$. Lastly, the case when $k=0$ can be shown similarly.
\end{proof}

Now we apply Theorem \ref{thm:unique-max} to the 1D case when $T=[a, b]$. Without loss of generality, assume $t^*=b$ and $\nu'(t^*)=0$. Then it follows from Theorem \ref{thm:unique-max} that
\begin{equation*}
	\begin{split}
&\quad \P \left\{\sup_{t\in [a,b]} X(t) \geq u \right\}\\
& = \left(\P\{X'(t^*)>0\} + \sqrt{\frac{\E\{X(t^*)X''(t^*)\}}{{\rm Var}(X'(t^*))+\E\{X(t^*)X''(t^*)\}}}\P\{Z>0\} \right)\Psi\left(\frac{u}{\sigma_T}\right)(1+o(1))\\
& = \frac{1}{2}\left(1 + \sqrt{\frac{\E\{X(t^*)X''(t^*)\}}{{\rm Var}(X'(t^*))+\E\{X(t^*)X''(t^*)\}}} \right)\Psi\left(\frac{u}{\sigma_T}\right)(1+o(1)),
\end{split}
\end{equation*}
where $Z$ is a centered Gaussian variable.

Denote by $\R_{+}^n=(0,\infty)^n$. To simplify the statement in Theorem \ref{thm:unique-max}, we present below another version with less notations on faces. 
\begin{corollary}\label{cor:unique-max} Let $\{X(t),\, t\in T\}$ be a centered Gaussian random field satisfying $({\bf H}1)$, $({\bf H}2)$ and $({\bf H}3)$. Suppose $\nu$ attains its maximum $\sigma_T^2$ only at $t^* \in K \in \partial_k T$ with $\tau(K)=\{1, \ldots, k\}$ such that $ \mathcal{I}(t^*)= \{1,\ldots, k, k+1, \ldots, k+m\}$ with $m\ge 1$ and $\left(\nu_{ii'}(t^*)\right)_{1\le i,i'\le k+m}\prec 0$. Then, as $u\to \infty$,
	\begin{equation}\label{eq:interior3}
		\begin{split}
			&\quad \P\left\{\sup_{t\in T} X(t) \geq u\right\}\\
			&= \sum_{j=k}^{k+m}\sum_{J\in \partial_j T:\, t^*\in \bar{J} } \sqrt{\frac{{\rm det}(\Sigma_J(t^*))}{{\rm det}(\La_J(t^*) + \Sigma_J(t^*))}}\P\{(Z_1, \ldots, Z_{j-k})\in \R_{+}^{j-k}\} \\
			&\quad \times \P\big\{(X_{j+1}(t^*), \ldots, X_{k+m}(t^*))\in \R_{+}^{k+m-j} \big|\nabla X_{|J}(t^*)=0\big\} \Psi\left(\frac{u}{\sigma_T}\right)(1+o(1)),
		\end{split}
	\end{equation}
	where $(Z_1, \ldots, Z_{j-k})$ is a centered Gaussian vector having covariance $\Sigma(t^*) + \Sigma(t^*)\La^{-1}(t^*)\Sigma(t^*)$ with indices restricted on $\{k+1, \ldots, j\}$, and $\La_J(t^*)$ and $\Sigma_J(t^*)$ are defined in \eqref{eq:Sigma}. In particular, for $k=0$, the term inside the sum in \eqref{eq:interior3} with $J=K=\{t^*\}$ is 
	\[
	\P\{(X_1(t^*), \ldots, X_m(t^*))\in \R_{+}^m \}\Psi\left(\frac{u}{\sigma_T}\right).
	\]
\end{corollary}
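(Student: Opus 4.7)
The plan is to derive Corollary \ref{cor:unique-max} as a direct consequence of Theorem \ref{thm:unique-max}: the Corollary is essentially a cleaner restatement of the same conclusion under the specific coordinate labeling $\tau(K)=\{1,\ldots,k\}$ and $\mathcal{I}(t^*)=\{1,\ldots,k+m\}$. No new probabilistic or analytic input is required; the work lies entirely in reorganizing the indices and translating the half-space events $E(J),E'(J)$ of Theorem \ref{thm:unique-max} into standard orthants. The main (minor) obstacle is careful bookkeeping of which indices fall in $\tau(K)$, $\tau(J)\setminus\tau(K)$, $\mathcal{I}(t^*)\setminus\tau(J)$, and $\{1,\ldots,N\}\setminus\mathcal{I}(t^*)$ throughout the translation.

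First I would parametrize the faces $J$ appearing in the sum of Theorem \ref{thm:unique-max}. The conditions $t^*\in\bar{J}$ and $\tau(J)\subset\mathcal{I}(t^*)$, combined with the fact that $t^*\in K$ forces $K\subseteq\bar{J}$ and hence $\tau(K)\subset\tau(J)$, give $\tau(J)=\{1,\ldots,k\}\cup S$ for some $S\subset\{k+1,\ldots,k+m\}$. Setting $j:={\rm dim}(J)=k+|S|$ and letting $j$ range from $k$ (the case $J=K$) to $k+m$ (the case $\tau(J)=\mathcal{I}(t^*)$) regroups the single sum in Theorem \ref{thm:unique-max} into the iterated form $\sum_{j=k}^{k+m}\sum_{J\in\partial_j T,\,t^*\in\bar{J}}$ displayed in \eqref{eq:interior3}.

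Next I would translate the half-space events. By choosing orientations so that $\varepsilon^*_i=1$ for the coordinates $i\in\mathcal{I}(t^*)\setminus\tau(K)$ (reflecting $X$ in any remaining coordinates if necessary, which alters neither the Gaussian law nor any of the covariance or determinant factors entering the formula), the defining inequalities $\varepsilon^*_{J_\ell}(J)y_{J_\ell}\ge 0$ for $E(J)$ and $\varepsilon^*_{J_\ell'}(K)y_{J_\ell'}\ge 0$ for $E'(J)$ collapse to $y_{J_\ell}\ge 0$ and $y_{J_\ell'}\ge 0$, so $E(J)=\R_+^{k+m-j}$ and $E'(J)=\R_+^{j-k}$. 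Reindexing within each summand so that $\tau(J)\setminus\tau(K)$ plays the role of $\{k+1,\ldots,j\}$ and $\mathcal{I}(t^*)\setminus\tau(J)$ plays the role of $\{j+1,\ldots,k+m\}$, the conditional and unconditional orthant probabilities in Theorem \ref{thm:unique-max} transcribe verbatim into the two probability factors in \eqref{eq:interior3}; the covariance of $(Z_1,\ldots,Z_{j-k})$ is $\Sigma(t^*)+\Sigma(t^*)\La^{-1}(t^*)\Sigma(t^*)$ restricted to those indices, as already identified via \eqref{eq:nu''} in the proof of Theorem \ref{thm:unique-max}.

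Finally, the determinantal prefactor $\sqrt{\det(\Sigma_J(t^*))/\det(\La_J(t^*)+\Sigma_J(t^*))}$ and the tail factor $\Psi(u/\sigma_T)(1+o(1))$ carry over verbatim. For the special case $k=0$, the unique summand with $j=0$ and $J=K=\{t^*\}$ has no gradient or Hessian contribution: the conditional probability degenerates to the unconditional orthant probability of $(X_1(t^*),\ldots,X_m(t^*))$, and the prefactor reduces to $1$ by the zero-dimensional-face conventions recorded after Theorem \ref{Thm:MEC approximation je}. Assembling these ingredients yields \eqref{eq:interior3}.
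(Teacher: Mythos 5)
Your derivation is correct and is essentially the route the paper takes: the corollary carries no separate proof in the paper, being precisely the specialization of Theorem \ref{thm:unique-max} to $\tau(K)=\{1,\ldots,k\}$ and $\mathcal{I}(t^*)=\{1,\ldots,k+m\}$, with the admissible faces $J$ (those with $t^*\in\bar{J}$ and $\tau(J)\subset\mathcal{I}(t^*)$, i.e.\ $\tau(J)=\{1,\ldots,k\}\cup S$, $S\subset\{k+1,\ldots,k+m\}$) regrouped by dimension and the events $E(J)$, $E'(J)$ rewritten as orthants. One caution: your parenthetical claim that reflecting a coordinate ``alters neither the Gaussian law nor any of the covariance or determinant factors entering the formula'' is too strong --- a reflection flips the sign of the corresponding first partial derivative and conjugates $\Sigma(t^*)$ and $\La(t^*)$ by a diagonal sign matrix, so while $\det(\Sigma_J(t^*))$, $\det(\La_J(t^*)+\Sigma_J(t^*))$ and $\Psi(u/\sigma_T)$ are indeed unchanged, the orthant probabilities must then be evaluated with the conjugated covariances; equivalently, the displayed $\R_+$ form with the covariances as stated is literally the case $\varepsilon^*_\ell=+1$ for all $\ell\in\mathcal{I}(t^*)\setminus\tau(K)$ (as in all of the paper's Section \ref{sec:example} examples), which is the reading the corollary intends.
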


\section{Examples}\label{sec:example}
Throughout this section, we consider a centered Gaussian random field $\{X(t),\, t\in T\}$ satisfying $({\bf H}1)$, $({\bf H}2)$ and $({\bf H}3)$, where $T=[a_1, b_1]\times[a_2, b_2] \subset \R^2$.
\subsection{Examples with a unique maximum point of the variance}
Suppose $\nu(t_1,t_2)$ attains the maximum $\sigma_T^2$ only at a single point $t^*=(t_1^*,t_2^*)$; and the assumptions in Theorems \ref{thm:unique-max-boundary} or \ref{thm:unique-max} are satisfied.

\noindent \textbf{Case 1: $\bm {t^* = (b_1, b_2)}$ and $\bm {\nu_1(t^*)\nu_2(t^*)\neq 0}$.} It follows directly from Theorem \ref{thm:unique-max-boundary} that
\begin{equation*}
	\begin{split}
		\P\left\{\sup_{t\in T} X(t) \geq u\right\}
		= \Psi\left(\frac{u}{\sigma_T}\right) +o\left( \exp \left\{ -\frac{u^2}{2\sigma_T^2} -\alpha u^2 \right\}\right).
	\end{split}
\end{equation*}

\noindent \textbf{Case 2: $\bm {t^* = (b_1, b_2)}$, $\bm{\nu_1(t^*)= 0}$ and $\bm{\nu_2(t^*)\neq 0}$.} It follows from Corollary \ref{cor:unique-max} that
\begin{equation*}
	\begin{split}
		&\quad \P \left\{\sup_{t\in T} X(t) \geq u \right\}\\
		& = \left(\P\{X_1(t^*)>0\} + \sqrt{\frac{\E\{X(t^*)X_{11}(t^*)\}}{{\rm Var}(X_1(t^*))+\E\{X(t^*)X_{11}(t^*)\}}}\P\{Z>0\} \right)\Psi\left(\frac{u}{\sigma_T}\right)(1+o(1))\\
		& = \frac{1}{2}\left(1 + \sqrt{\frac{\E\{X(t^*)X_{11}(t^*)\}}{{\rm Var}(X_1(t^*))+\E\{X(t^*)X_{11}(t^*)\}}} \right)\Psi\left(\frac{u}{\sigma_T}\right)(1+o(1)),
	\end{split}
\end{equation*}
where $Z$ is a centered Gaussian variable. 

\noindent \textbf{Case 3: $\bm {t^* = (b_1, b_2)}$ and $\bm{\nu_1(t^*)= \nu_2(t^*)= 0}$.} Applying Corollary \ref{cor:unique-max} and noting the calculations in Case 2 above, we obtain
\begin{equation*}
	\begin{split}
		&\quad \P \left\{\sup_{t\in T} X(t) \geq u \right\}\\
		& = \Bigg(\P\{X_1(t^*)>0, X_2(t^*)>0\} + \frac{1}{2}\sqrt{\frac{\E\{X(t^*)X_{11}(t^*)\}}{{\rm Var}(X_1(t^*))+\E\{X(t^*)X_{11}(t^*)\}}}  \\
		&\qquad + \frac{1}{2}\sqrt{\frac{\E\{X(t^*)X_{22}(t^*)\}}{{\rm Var}(X_2(t^*))+\E\{X(t^*)X_{22}(t^*)\}}}\\
		&\qquad + \P\{Z_1>0, Z_2>0\} \sqrt{\frac{{\rm det}(\Sigma(t^*))}{{\rm det}(\La(t^*) + \Sigma(t^*))}}\Bigg)\Psi\left(\frac{u}{\sigma_T}\right)(1+o(1)),
	\end{split}
\end{equation*}
where $(Z_1, Z_2)$ is a centered Gaussian vector with covariance $\Sigma(t^*) + \Sigma(t^*)\La^{-1}(t^*)\Sigma(t^*)$. 

\noindent \textbf{Case 4: $\bm {t^* = (t_1^*, b_2)}$, where $\bm{t_1^*\in (a_1,b_1)}$ and $\bm{\nu_2(t^*)\neq 0}$.} 
It follows directly from Theorem \ref{thm:unique-max-boundary} that
\begin{equation*}
	\begin{split}
		\P\left\{\sup_{t\in T} X(t) \geq u\right\}
		= \sqrt{\frac{\E\{X(t^*)X_{11}(t^*)\}}{{\rm Var}(X_1(t^*))+\E\{X(t^*)X_{11}(t^*)\}}}\Psi\left(\frac{u}{\sigma_T}\right)(1+o(1).
	\end{split}
\end{equation*}

\noindent \textbf{Case 5: $\bm {t^* = (t_1^*, b_2)}$, where $\bm{t_1^*\in (a_1,b_1)}$ and $\bm{\nu_2(t^*)= 0}$.} Applying Corollary \ref{cor:unique-max} and noting the calculations in Case 2 above, we obtain
\begin{equation*}
	\begin{split}
		&\quad \P \left\{\sup_{t\in T} X(t) \geq u \right\}\\
		& = \frac{1}{2}\Bigg(\sqrt{\frac{\E\{X(t^*)X_{11}(t^*)\}}{{\rm Var}(X_1(t^*))+\E\{X(t^*)X_{11}(t^*)\}}}  + \sqrt{\frac{{\rm det}(\Sigma(t^*))}{{\rm det}(\La(t^*) + \Sigma(t^*))}}\Bigg)\Psi\left(\frac{u}{\sigma_T}\right)(1+o(1)).
	\end{split}
\end{equation*}

\noindent \textbf{Case 6: $\bm {a_1<t_1^*<b_1}$ and $\bm {a_2<t_2^*<b_2}$.} It follows directly from Theorem \ref{thm:unique-max-boundary} that
\begin{equation*}
	\begin{split}
		\P\left\{\sup_{t\in T} X(t) \geq u\right\}
		= \sqrt{\frac{{\rm det}(\Sigma(t^*))}{{\rm det}(\La(t^*) + \Sigma(t^*))}}\Psi\left(\frac{u}{\sigma_T}\right)(1+o(1).
	\end{split}
\end{equation*}

\subsection{Examples with the maximum of the variance achieved on a line}
Consider the Gaussian random field $X(t)$ defined as:
\[
X(t)=\xi_1\cos t_1 + \xi_1'\sin t_1 + t_2(\xi_2\cos t_2 + \xi_2'\sin t_2),
\] 
where $t=(t_1, t_2)\in T=[a_1, b_1]\times [a_2, b_2]\subset (0,2\pi)^2$, and $\xi_1, \xi_1', \xi_2, \xi_2'$ are independent standard Gaussian random variables. This is a Gaussian random field on $\R^2$ generated from the cosine field, with an additional product of $t_2$ along the vertical direction. The constraint on the parameter space within $(0, 2\pi)^2$ is imposed to prevent degeneracy in derivatives. For this field, we have $\nu(t)=1+t_2^2$, which reaches the maximum $\sigma_T^2=1+b_2^2$ on the entire real line $L:= \{(t_1, b_2): a_1\le t_1 \le b_1\}$. Furthermore, 
\[
\nu_1(t)|_{t\in L}=0, \quad \nu_2(t)|_{t\in L} = 2b_2 >0, \quad \forall t\in L.
\]
By employing similar reasoning in the proofs of Theorems \ref{Thm:MEC approximation je} and \ref{Thm:MEC approximation je2}, we see that, in the EEC approximation $\E\{\chi(A_u)\}$, all integrals (derived from the Kac-Rice formula) over faces not contained within $\bar{L}$ are super-exponentially small. Thus, there exists $\alpha>0$ such that as $u\to \infty$,
\begin{equation}\label{eq:cosine}
	\begin{split}
		\P \left\{\sup_{t\in T} X(t) \geq u \right\}
		&=\P\{X(a_1, b_2)\ge u, X_1(a_1, b_2)<0\} + \P\{X(b_1, b_2)\ge u, X_1(b_1, b_2)>0\}  \\
		&\quad + I(u) +o\left( \exp \left\{ -\frac{u^2}{2\sigma_T^2} -\alpha u^2 \right\}\right)\\
		&= \Psi\left(\frac{u}{\sqrt{1+b_2^2}}\right) + I(u) +o\left( \exp \left\{ -\frac{u^2}{2\sigma_T^2} -\alpha u^2 \right\}\right),
	\end{split}
\end{equation}
where
\begin{equation*}
	\begin{split}
		I(u)&= -\int_{a_1}^{b_1}\E\big\{X_{11}(t_1, b_2) \mathbbm{1}_{\{X(t_1, b_2)\geq u\}}\big|X_1(t_1, b_2)=0\big\}p_{X_1(t_1, b_2)}(0)dt_1.
	\end{split}
\end{equation*}
Since $X_1(t_1, b_2)=-\xi_1\sin t_1 + \xi_1'\cos t_1$ and $X_{11}(t_1, b_2)=-\xi_1\cos t_1 - \xi_1'\sin t_1$, one has
\begin{equation*}
	\begin{split}
		{\rm Cov}(X(t_1, b_2), X_1(t_1, b_2), X_{11}(t_1, b_2))&= \begin{pmatrix}
			1+b_2^2 & 0 & -1\\
			0 & 1 & 0\\
			-1 & 0 & 1
		\end{pmatrix},
	\end{split}
\end{equation*}
which does not depend on $t_1$. Particularly, $X_1(t_1, b_2)$ is independent of both $X(t_1, b_2)$ and $X_{11}(t_1, b_2)$. Thus
\begin{equation*}
	\begin{split}
		I(u)&= -\frac{b_1-a_1}{\sqrt{2\pi}}\E\big\{X_{11}(t_1, b_2) \mathbbm{1}_{\{X(t_1, b_2)\geq u\}}\big\}\\
		&= -\frac{b_1-a_1}{\sqrt{2\pi}} \int_u^\infty \E\{X_{11}(t_1, b_2) | X(t_1, b_2)=x\}\phi\left(\frac{x}{\sqrt{1+b_2^2}}\right)dx\\
		&= \frac{b_1-a_1}{\sqrt{2\pi}} \int_u^\infty \frac{x}{1+b_2^2}\phi\left(\frac{x}{\sqrt{1+b_2^2}}\right)dx\\
		&= \frac{b_1-a_1}{\sqrt{2\pi}}\phi\left(\frac{u}{\sqrt{1+b_2^2}}\right).
	\end{split}
\end{equation*}
Substituting this expression into \eqref{eq:cosine}, we arrive at the following refined approximation:
\begin{equation*}
	\begin{split}
		\P \left\{\sup_{t\in T} X(t) \geq u \right\}
		= \Psi\left(\frac{u}{\sqrt{1+b_2^2}}\right) + \frac{b_1-a_1}{\sqrt{2\pi}}\phi\left(\frac{u}{\sqrt{1+b_2^2}}\right) +o\left( \exp \left\{ -\frac{u^2}{2\sigma_T^2} -\alpha u^2 \right\}\right),
	\end{split}
\end{equation*}
which has a super-exponentially small error.

\section*{Acknowledgments}
The author acknowledges support from NSF Grants DMS-1902432 and DMS-2220523, as well as the Simons Foundation Collaboration Grant 854127.


\bibliographystyle{plainnat}

\begin{thebibliography}{36}
	
	\bibitem[Adler(2000)]{Adler00}
	Adler, R. J. (2000).
	\newblock On excursion sets, tube formulas and maxima of random fields.
	\newblock {\it Ann. Appl. Probab.} {\bf 10}, 1--74.
	
	\bibitem[Adler and Taylor(2007)]{Adler:2007}
	Adler, R. J. and Taylor, J. E. (2007).
	\newblock \emph{Random fields and geometry}.
	\newblock Springer, New York.
	
	\bibitem[Aza\"is and Delmas(2002)]{AzaisD02}
	Aza\"is, J. M. and Delmas, C. (2002).
	\newblock Asymptotic expansions for the distribution of the
	maximum of Gaussian random fields.
	\newblock {\it Extremes.} {\bf 5}, 181--212.
	
	
	\bibitem[Aza\"is and Wschebor(2009)]{AzaisW09}
	Aza\"is, J. M. and Wschebor, M. (2009).
	\newblock { \it Level Sets and Extrema of Random Processes
		and Fields}.
	\newblock John Wiley \& Sons, Hoboken, NJ.
	
	\bibitem[Cheng and Xiao (2016)]{ChengXiao2014}
	Cheng, D. and Xiao, Y. (2016).
	\newblock The mean Euler characteristic and excursion probability of Gaussian random fields with stationary increments.
	\newblock {\it Ann. Appl. Probab.} {\bf 26},  722--759.
	
	\bibitem[Piterbarg(1996)]{Piterbarg:1996}
	Piterbarg, V. I. (1996).
	\newblock \emph{Asymptotic Methods in the Theory of Gaussian Processes and
		Fields. Translations of Mathematical Monographs 148}.
	\newblock Amer. Math. Soc., Providence, RI.
	
	\bibitem[Piterbarg (1996)]{Piterbarg96}
	Piterbarg, V. I. (1996)
	\newblock Rice's method for large excursions of Gaussian random fields.
	\newblock Technical Report NO. 478, Center for Stochastic Processes, Univ. North Carolina.
	
	\bibitem{Siegmund:1995}
	Siegmund, D. and Worsley, K. J. (1995). Testing for a signal with unknown location and scale in a stationary Gaussian random field. \emph{Ann. Statist.} {\bf 23}, 608--639.
	
	\bibitem{Sun93}
	Sun, J. (1993). Tail probabilities of the maxima of Gaussian random fields. {\it Ann. Probab.} {\bf 21}, 34--71.
	
	\bibitem[Sun (2001)]{Sun:2001}
	Sun, J. (2001).
	\newblock Multiple comparisons for a large number of parameters.
	\newblock {\it Biometrical Journal} {\bf 43},  627--643.
	
	\bibitem[Taylor and Adler(2003)]{TaylorAdler03}
	Taylor, J. E. and Adler, R. J. (2003).
	\newblock  Euler characteristics for Gaussian fields on manifolds.
	\newblock {\it  Ann. Probab.} {\bf 31},  533--563.
	
	\bibitem[Taylor et al.(2005)]{TTA05}
	Taylor, J. E., Takemura, A. and Adler, R. J. (2005).
	\newblock Validity of the expected Euler characteristic heuristic.
	\newblock {\it Ann. Probab.} {\bf 33}, 1362--1396.
	
	\bibitem{TaylorW07}
	Taylor, J. E. and Worsley, K. J. (2007). Detecting sparse signals in random fields, with an application to brain mapping. {\it J. Amer. Statist. Assoc.} {\bf 102}, 913--928.
	
	\bibitem{TaylorW08}
	Taylor, J. E. and Worsley, K. J. (2008). Random fields of multivariate test statistics, with applications to shape analysis. {\it Ann. Statist.} {\bf 36}, 1--27.
	
	\bibitem[Wong(2001)]{Wong2001}
	Wong, R. (2001).
	\newblock {\it Asymptotic Approximations of Integrals}.
	\newblock SIAM, Philadelphia, PA.
	
\end{thebibliography}
\begin{small}

\end{small}

\begin{quote}
	\begin{small}
		
		\textsc{Dan Cheng}\\
		School of Mathematical and Statistical Sciences\\
		Arizona State University\\
		900 S Palm Walk\\
		Tempe, AZ 85281, USA\\
		E-mail: \texttt{cheng.stats@gmail.com}

	\end{small}
\end{quote}

\end{document}